\newtheorem{thm}{Theorem}[section]
\newtheorem{fact}[thm]{Fact}
\newtheorem{assum}[thm]{Assumption}
\newtheorem{lem}[thm]{Lemma}
\newtheorem{cor}[thm]{Corollary}
\newtheorem{pro}[thm]{Proposition}
\newtheorem{ques}[thm]{Question}
\theoremstyle{definition}
\newtheorem{defi}[thm]{Definition}
\newtheorem{ex}[thm]{Example}
\theoremstyle{remark}
\newtheorem{rmk}{Remark}
\newcommand{\Spin}{\rm {Spin}}
\newcommand{\Pin}{\rm {Pin}(2)}
\newcommand{\Res}{\rm {Res}}
\newcommand{\Tr}{\rm {Tr}}
\newcommand{\Fr}{\rm {Fr}^{v}}
\newcommand{\SO}{\rm{SO}}
\newcommand{\BF}{\rm{BF}}
\title{ISOTOPY OF THE DEHN TWIST ON $K3\#K3$ AFTER A SINGLE STABILIZATION}
\author{Jianfeng Lin}
\address{Yau Mathematical Sciences Center\newline\indent Tsinghua University \newline\indent Beijing, China}
\email{\rm{linjian5477@gmail.com}}
\date{}
\begin{document}
\maketitle
\begin{abstract}
Kronheimer-Mrowka recently proved that the Dehn twist along a 3-sphere in the neck of $K3\#K3$ is not smoothly isotopic to the identity. This provides a new example of self-diffeomorphisms on 4-manifolds that are isotopic to the identity in the topological category but not smoothly so. (The first such examples were given by Ruberman.) In this paper, we use the $\Pin$-equivariant Bauer-Furuta invariant to show that this Dehn twist is not smoothly isotopic to the identity even after a single stabilization (connected summing with the identity map on $S^{2}\times S^{2}$). This gives the first example of exotic phenomena on simply connected smooth 4-manifolds that do not disappear after a single stabilization.

\end{abstract}
\section{Introduction}
Understanding smooth structures on 4-manifolds remains one of the most difficult topics in low dimensional topology. In this dimension, many results that hold in the topological category will not hold in the smooth category. Such phenomena are called ``exotic phenomena.'' To motivate our discussion, we list three major instances of exotic phenomena:  
\begin{itemize}
    \item By the groundbreaking work of Donaldson \cite{Donaldson87,Donaldson-Kronheimer90} and Freedman \cite{Freedman82} (and many subsequent works), there exist many pairs of simply-connected, closed, smooth 4-manifolds that are homeomorphic but not diffeomorphic.
   
    \item Ruberman \cite{RubermanObstruction} gave the first example of self-diffeomorphisms on 4-manifolds that are isotopic to the identity in the topological category but not smoothly so. Further examples are given by Auckly-Kim-Melvin-Ruberman\cite{AucklyKimMelvinRuberman}, Akbulut \cite{Akbulut1}, Baraglia-Konno \cite{BaragliaKonno} and Kronheimer-Mrowka \cite{KronheimerMrowkaDehnTwist}.
     \item By the combined work of Wall \cite{Wall:Diffeomorphism}, Perron \cite{Perron86}, Quinn \cite{QuinnIsotopy} and Donaldson \cite{Donaldson87}, there exist pairs of embedded 2-spheres in 4-manifolds with simply-connected complement that are topologically isotopic to each other but not smoothly so. (See \cite{Akbulut1,AucklyKimMelvinRuberman} for explicit families of such examples.)
\end{itemize} 
Exotic phenomena appear in all these three problems, which we call as the ``diffeomorphism existence problem'', the ``diffeomorphism isotopy problem'' and the ``surface isotopy problem''. A fundamental principle, as discovered by Wall \cite{Wall:OnSimplyConnected,Wall:Diffeomorphism} in the 1960s, states that  these exotic phenomena will eventually disappear after sufficient many times of stablizations on the 4-manifolds. (Here stabilization means taking connected sum with  $S^{2}\times S^{2}$.) More precisely, now we know the following results:
\begin{itemize}
    \item Wall \cite{Wall:OnSimplyConnected} proved that any pair of homotopy equivalent, simply connected smooth 4-manifolds are stably diffeomorphic.  Namely, they become diffeomorphic after sufficiently many stabilizations. 
    \item Gompf \cite{Gompfstabilization} and Kreck \cite{Krecksurgery} further proved that any pair of homeomorphic orientable smooth 4-manifolds (not necessarily simply connected) are stable diffeomorphic. They also proved that non-orientable pairs can be made stably diffeomorphic by first doing a twisted stabilization (i.e., connected summing a twisted bundle $S^{2}\tilde{\times}S^{2}$). In fact, for any $G$ with $H^{1}(G;\mathbb{Z}/2)\neq 0$, Kreck \cite{Kreck} constructed examples of homeomorphic non-orientable smooth 4-manifold pairs with fundamental group $G$ which are not stably diffeomorphic. (Different constructions of such examples were given by Cappell-Shaneson \cite{Cappell-Shaneson} for $G=\mathbb{Z}/2$ and Akbulut \cite{Akbulut2} for $G=\mathbb{Z}$.) This implies that a twisted stabilization is indeed necessary in the non-orientable case.  
    \item By combining the results of Kreck \cite{Kreckisotopy} and Quinn \cite{QuinnIsotopy}, we know that homotopic diffeomorphisms of any simply-connected smooth 4-manifold are smoothly isotopic after sufficient many stablizations. Here stabilization means first isotoping the diffeomorphisms so that they all fix a small ball $B$ pointwisely and then taking connected sum with the identity map on $S^{2}\times S^{2}$ along $B$.
    \item The work of Wall \cite{Wall:Diffeomorphism}, Perron \cite{Perron86} and Quinn \cite{QuinnIsotopy} shows that for any pair of homologous closed surfaces of the same genus embedded in a 4-manifold with simply-connected complement, they become smoothly isotopic after sufficiently many times of \emph{external stabilizations}. Here external means that the connected sums with $S^{2}\times S^{2}$ are taken away from the surfaces.
\end{itemize}

These results motivate the following natural question:
\begin{ques}
How many stablizations are necessary in each of these three problems?
\end{ques}
There has been a speculation that one stabilization is actually enough in all three problems. This is based on several known results: 
\begin{itemize}
    \item It is shown by Baykur-Sunukjian \cite{BaykurSunukjian} that exotic pairs of nonspin 4-manifolds produced by `standard methods' (logarithmic transforms, knot surgeries, and rational blow downs) all become diffeomorphic after a single stabilization. 
    \item In the large families of examples (of embedded surfaces and self-diffeomorphisms) established in \cite{Akbulut1,AucklyKimMelvinRuberman}, exactly one stabilization is needed.
    \item Auckly-Kim-Ruberman-Melvin-Schwartz \cite{AucklyKimMelvinRubermanSchwartz} proved that any two homologous surfaces $F_{1}, F_{2}$ of the same genus embedded in a smooth 4-manifold $X$ with simply connected complements are smoothly isotopic after a single stabilization if they are not characteristic (i.e. $[F_{i}]$ is not dual to the Stiefel-Whitney class $w_{2}(X)$). This shows that in the non-characteristic case, one stabilization is indeed enough in the surface isotopy problem. (When the surfaces are characteristic, they proved a similar result involving a single twisted stabilization.) 
\end{itemize}

In this paper, we prove the the following theorem. 
\begin{thm}[Main Theorem]\label{thm: main} Let $\delta$ be the Dehn twist along a separating 3-sphere in the neck of the connected sum $K3\#K3$. Then $\delta$ is \textbf{not} smoothly isotopic to the identity map even after a single stabilization.
\end{thm}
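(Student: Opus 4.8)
The plan is to obstruct the isotopy using the $\Pin$-equivariant families Bauer–Furuta invariant. Write $X=K3\#K3$ and $X'=X\#(S^2\times S^2)$ (both spin, $b_1=0$), let $S\subset X$ be the separating $3$-sphere supporting $\delta$, so that $X=M_1\cup_S M_2$ with each $M_i=K3\setminus B^4$, and after one stabilization arrange $X'=M_1\cup_S M'_2$ with $M'_2=(K3\#(S^2\times S^2))\setminus B^4$. Recall that $\delta$ is supported in a collar $S\times[-1,1]$ and is built from the generator $\gamma\colon S^1\to \SO(4)$ of $\pi_1(\SO(4))\cong \mathbb{Z}/2$ acting on $S^3$.

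The first step is to set up, for a spin $4$-manifold $Z$ with $b_1=0$ and a diffeomorphism $f$ preserving the spin structure, the families monopole map of the mapping torus $Z_f\to S^1$; finite-dimensional approximation yields a $\Pin$-equivariant stable map between Thom spaces of $\Pin$-bundles over $S^1$, whose stable homotopy class $\mathrm{FBF}(Z,f)$ is an invariant of the smooth isotopy class of $f$. When $f\simeq \mathrm{id}_Z$ the mapping torus is the product family, so $\mathrm{FBF}(Z,f)$ is the \emph{untwisted} $S^1$-suspension of the ordinary $\Pin$-Bauer–Furuta class $\mathrm{BF}(Z)$. It therefore suffices to prove $\mathrm{FBF}(X',\delta)\ne \mathrm{FBF}(X',\mathrm{id})$, i.e.\ that a difference element in a $\Pin$-equivariant stable cohomotopy group over $S^1$ is nonzero.

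The second step is a gluing formula for the Dehn twist. Since $\delta$ is supported near $S$, the family $X'_\delta\to S^1$ is the fiberwise connected sum of the product families $M_1\times S^1$ and $M'_2\times S^1$, glued over $S\times S^1$ by the clutching loop $\gamma$. Combining Bauer's connected-sum formula for Bauer–Furuta invariants with the triviality of the Seiberg–Witten Floer homotopy type of $S^3$, one finds that $\mathrm{FBF}(X',\delta)$ is a \emph{twisted $S^1$-suspension} of the smash product $\mathrm{BF}(K3)\wedge \mathrm{BF}(K3\#(S^2\times S^2))$: the underlying stable map is the usual one, but the Thom-space bundle over $S^1$ is twisted according to the action of $\gamma$ on the spinor data localized at the neck. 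The decisive algebraic fact is that the lift of $\gamma$ to $\Spin(4)$ runs from $1$ to the central element acting by $-1$ on every spinor, which is precisely $-1\in S^1\subset \Pin$; hence the twisting is by the order-two central element of $\Pin$, which acts as $-\mathrm{id}$ on the quaternionic (Dirac) summands and trivially on the real ($H^+$) summands. Thus $\mathrm{FBF}(X',\delta)$ is a canonical ``$\mathbb{Z}/2$-twisted suspension'' of $\mathrm{BF}(K3)\wedge \mathrm{BF}(K3\#(S^2\times S^2))$, something the product family cannot realize. (Discarding the $S^2\times S^2$ factor, this is the families class underlying the unstabilized theorem of Kronheimer–Mrowka.)

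The last step — and the main obstacle — is the nonvanishing of this twisted class together with its survival after one stabilization. By Bauer's formula again, $\mathrm{BF}(K3\#(S^2\times S^2))=\mathrm{BF}(K3)\wedge \mathrm{BF}(S^2\times S^2)$, so the whole problem becomes the analysis of the $\mathbb{Z}/2$-twisted smash of two copies of $\mathrm{BF}(K3)$ — nontrivial by a Furuta $10/8$-type argument — together with the basic class $\mathrm{BF}(S^2\times S^2)\colon S^0\to S^{\widetilde{\mathbb{R}}}$, whose smash is exactly the effect of one stabilization. Comparing the $\mathbb{Z}/2$-twisted $S^1$-suspension with the untwisted one through the associated cofiber sequence reduces the nonvanishing to an explicit computation in $\Pin$-equivariant stable homotopy: that the relevant $\eta$-type element is nonzero and, crucially, that the one-stabilization smash with $\mathrm{BF}(S^2\times S^2)$ is injective on the group where it lives. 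This is the step where the full $\Pin$-symmetry is indispensable — the $S^1$-symmetry alone only records the numerical Seiberg–Witten invariants of $X'$, which vanish because $X'$ is a connected sum — and it is what prevents a single stabilization from destroying the phenomenon. Granting the computation, $\mathrm{FBF}(X',\delta)\ne \mathrm{FBF}(X',\mathrm{id})$, so $\delta$, equivalently $\delta\#\mathrm{id}_{S^2\times S^2}$, is not smoothly isotopic to the identity.
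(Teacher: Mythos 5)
Your overall architecture (mapping torus, fiberwise connected-sum decomposition into a product $K3$-family, a twisted $K3$-family and the stabilizing $S^2\times S^2$ factor, with the twist given by the spin deck transformation) matches the paper's. But the last step, which is the entire content of the theorem, is not an argument: you reduce everything to the assertion that ``the one-stabilization smash with $\mathrm{BF}(S^2\times S^2)$ is injective on the group where it lives'' and then grant it. That assertion is false as stated. Smashing with $\BF^{\Pin}(S^2\times S^2)=e_{\tilde{\mathbb{R}}}\in\{S^0,S^{\tilde{\mathbb{R}}}\}^{\Pin}$ is multiplication by the Euler class of $\tilde{\mathbb{R}}$, and on the relevant group $\{S^{\mathbb{R}+2\mathbb{H}},S^{6\tilde{\mathbb{R}}}\}^{\Pin}$ the kernel of this multiplication is precisely the image of the transfer $\Tr^{\Pin}_{S^1}$ (Lemma \ref{lem: image of transfer}), which there is no reason to expect to vanish; so no blanket injectivity statement can close the proof, and the nonvanishing must instead be extracted from special features of the particular class at hand.

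Concretely, the paper argues by contradiction on the specific element $\alpha=\BF^{\Pin}\bigl((K3\times S^1,\tilde{\mathfrak{s}})\#(K3\times S^1,\tilde{\mathfrak{s}}^{\tau})\bigr)$: if $\alpha\cdot e_{\tilde{\mathbb{R}}}=0$ then $\alpha=\Tr^{\Pin}_{S^1}(\beta)$, so by the double coset formula $\Res^{\Pin}_{S^1}(\alpha)=\beta+c_j(\beta)$; the family Seiberg--Witten invariant $t(\Res^{\Pin}_{S^1}(\alpha))$ vanishes by the connected-sum vanishing lemma, forcing $t(\beta)=0$; the explicit computation $\{S^{\mathbb{R}+2\mathbb{H}},S^{6\tilde{\mathbb{R}}}\}^{S^1}\cong\mathbb{Z}\oplus\mathbb{Z}/2$ with $c_j$ acting trivially on $\ker t$ then gives $\Res^{\Pin}_{S^1}(\alpha)=2\beta=0$, contradicting Kronheimer--Mrowka's computation that the nonequivariant family invariant is $\eta^3\neq0$. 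None of this chain (transfer/double coset, the characteristic homomorphism $t$ and its behavior under $c_j$, the $\mathbb{Z}\oplus\mathbb{Z}/2$ computation) appears in your proposal, and it cannot be replaced by ``injectivity of stabilization.'' Two further points: the nonvanishing input is Kronheimer--Mrowka's $\eta^3$ computation via the framed moduli space, not a Furuta $10/8$-type argument; and to conclude that the stabilized mapping torus is a nontrivial bundle you must show \emph{both} of its spin structures have nonzero $\BF^{\Pin}$ (a trivial bundle would carry the product spin structure for one of them), a case distinction your write-up does not address.
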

To the authors’ knowledge, Theorem \ref{thm: main} provides the first example that exotic phenomena on simply connected smooth 4-manifolds do not disappear after a single stabilization with respect to $S^{2}\times S^{2}$. In particular, it implies that one stabilization is in general \emph{not} enough in the diffeomorphism isotopy problem.

Note that Kronheimer-Mrowka \cite{KronheimerMrowkaDehnTwist} proved that $\delta$ itself is not smoothly isotopic to the identity, using the nonequivariant Bauer-Furuta invariant for spin families. Our result is based on Kronheimer-Mrowka's theorem and makes use of the $\Pin$-equivariant version of the Bauer-Furuta invariant. This invariant was defined in \cite{BauerFuruta1} (for a single manifold) and in \cite{Xu,Szymik} (for families). It has been extensively studied in many papers including \cite{baraglia2019constraints,baraglia2019bauer} and it is the central tool in Furuta's proof of the $\frac{10}{8}$-theorem \cite{Furuta10/8}. The idea of using gauge-theoretic invariant for families to study isotopy problem first appears in \cite{RubermanObstruction}. The idea of using $\Pin$-equivariant Bauer-Furuta invariant to further study Dehn twists on 4-manifolds was suggested by Kronheimer-Mrowka in \cite{KronheimerMrowkaDehnTwist}.

We outline the proof of Theorem \ref{thm: main} as follows:  By taking the mapping torus of $\delta$, we form a smooth bundle $N$ with fiber $K3\#K3$ and base $S^{1}$. Then it suffices to show that the bundle $\tilde{N}$, formed by fiberwise connected sum  between $N$ and $(S^{2}\times S^{2})\times S^{1}$, is not a product bundle. This is proved by showing that the $\Pin$-equivariant Bauer-Furuta invariant  $\BF^{\Pin}(\tilde{N})$ is nonvanishing for both spin structures. Note that $\BF^{\Pin}(\tilde{N})$ equals the product of $\BF^{\Pin}(N)$ with the Euler class $e_{\tilde{\mathbb{R}}}$ (a stable homotopy class represented by the inclusion from $S^{0}=\{0,\infty\}$ to the 1-dimensional representation sphere $S^{\tilde{\mathbb{R}}}$). We prove by contradiction and assume that  
\begin{equation}\label{eq: vanishing BF}
\BF^{\Pin}(N)\cdot e_{\tilde{\mathbb{R}}}=0   
\end{equation}
 This gives information on $\BF^{\Pin}(N)$ and its $S^{1}$-reduction $\BF^{S^{1}}(N)\in \{S^{\mathbb{R}+2\mathbb{H}},S^{6\tilde{\mathbb{R}}}\}^{S^{1}}$. We can explicitly compute the homotopy group $\{S^{\mathbb{R}+2\mathbb{H}},S^{6\tilde{\mathbb{R}}}\}^{S^{1}}$ as $\mathbb{Z}\oplus \mathbb{Z}/2$. Based on this computation, information from (\ref{eq: vanishing BF}) and the fact that $\BF^{S^{1}}(N)$ gives a vanishing family Seiberg-Witten invariant, we can prove that $\BF^{S^{1}}(N)=0$. This further implies that the nonequivariant Bauer-Furuta invariant $\BF^{\{e\}}(N)$ is vanishing, which contradicts with Kronheimer-Mrowka's result that $\BF^{\{e\}}(N)$ equals the nonzero element $\eta^{3}\in \pi_{3}$. Note that $e_{\tilde{\mathbb{R}}}$ becomes trivial when reducing to the subgroup $S^{1}\subset \Pin$. As a consequence, the $S^{1}$-equivariant Bauer-Furuta invariant vanishes after a single stabilization (just like the classical Seiberg-Witten invariants and Donaldson's polynomial invariants). This explains why the $\Pin$-equivariance is essential in our proof.

We end this introductory section by remarking that it is still open whether one stabilization is enough to make any pairs of simply connected, heomeomorphic 4-manifold diffeomorphic. (See \cite{Akbulut-Mrowka-Ruan,Donaldsonorientation,Fintushel-Stern} for a possible approach using the 2-torsion instanton invariants.) It's also unknown whether two homotopic characteristic surfaces with simply connected complements become smoothly isotopic after a single stabilization.  The proof of Theorem \ref{thm: main} suggests that Bauer-Furuta invariant could be useful in attacking these problems. As a first step, one needs to establish new examples of spin 4-manifolds with sufficiently interesting higher-dimensional $\Pin$-equivariant Bauer-Furuta invariants.  Note that in a recent paper by the author and Mukherjee \cite{LinMukherjee}, we use Theorem \ref{thm: main} to establish the first pair of orientable exotic surfaces (in a punctured K3 surface) which are not smoothly isotopic even after one stabilization.

The paper is organized as follows: In section 2, we give a brief review on some basic $\Pin$-equivariant stable homotopy theory and recall definition of the equivariant Bauer-Furuta invariant. We also use this section to set up notations and to adapt some standard results to the setting we need. The actual proof of Theorem \ref{thm: main} is given in Section 3. Experts may directly skip to Section 3 and occasionally refer back to Section 2 for notations and results.
\\
\\
\textbf{Acknowledgement:} This author is partially supported by NSF Grant DMS-1949209. The author would like to thank Tye Lidman and Danny Ruberman for very enlightening conversations, thank Mark Powell for pointing out Kreck's work \cite{Kreck} and thank Selman Akbulut for explaining his work in \cite{Akbulut2, Akbulut1}.

\section{Background Knowledge }
\subsection{Background on Pin(2)-equivariant Homotopy Theory}
In this section, we collect some standard results (mostly from \cite{AdamsPrerequisites,Schwede,May,LewisMaySteinberher}) on $G$-equivariant stable homotopy theory in the case 
$$
G=\Pin=\{e^{i\theta}\}\cup \{j\cdot e^{i\theta}\}\subset \mathbb{H}.
$$
Instead of stating the most general form of these results, we will only focus on the special cases that are actually needed in our argument. We refer to \cite{AdamsPrerequisites,Schwede} for an introduction to the equivariant stable homotopy theory (in the case of finite group) and to \cite{May,LewisMaySteinberher} for a more general treatment. 

Since all objects we study here are finite $G$-CW complexes, for simplicity, we will work with the $G$-equivariant Spanier-Whitehead category \cite{AdamsPrerequisites} (instead of the homotopy category of $G$-spectra). Of course, there are a lot of drawbacks (e.g. one can not always take limits/colimits). But it is enough for our purpose. 

\subsubsection{Basic facts and definitions}

Let $U$ be an countably infinite dimensional $G$-representation space equipped with a $G$-invariant inner product, which we call a ``universe''. We assume that $U$ contains the following concrete representation
$$
(\mathop{\oplus}\limits_{\infty}\mathbb{R})\bigoplus (\mathop{\oplus}\limits_{\infty}\tilde{\mathbb{R}})\bigoplus (\mathop{\oplus}\limits_{\infty}\mathbb{H}).
$$
Here $\mathbb{R}$ is the trivial representation; $\tilde{\mathbb{R}}$ is the 1-dimensional representation on which $S^{1}$ acts trivially and $j$ acts as $-1$; and $\mathbb{H}$ is acted upon by $G$ via the left multiplication in quaternion.  

To apply the results in \cite{May} directly without checking further conditions, we further assume that $U$ is ``complete''. This means that $U$ contains infinitely many copies of all isomorphism classes of irreducible $G$-representations. \footnote{Since all $G$-CW complexes we consider in this paper can have only $G, S^{1}$ or $\{e\}$ as the isotropy group, all argument we make actually will still hold for the incomplete universe $
(\mathop{\oplus}\limits_{\infty}\mathbb{R})\bigoplus (\mathop{\oplus}\limits_{\infty}\tilde{\mathbb{R}})\bigoplus (\mathop{\oplus}\limits_{\infty}\mathbb{H})
$, which is more relevant to the geometric setting.}

We will use $H$ to denote either the group $G$ or its subgroups $S^{1}$ or $\{e\}$. By restricting the $G$-action on $U$, we can also use $U$ as a complete $H$-universe. We use $\mathbf{R}_{H}$ to denote the set of all \emph{finite dimensional} $H$-representations contained in $U$. We will treat $\mathbf{R}_{G}$ as a subset of $\mathbf{R}_{S^{1}}$ and $\mathbf{R}_{\{e\}}$ by restricting the $G$-action.

For any $V\in \mathbf{R}_{H}$, we use $S^{V}$ to denote the 1-point compactification of $V$ (called the representation sphere) and use $S(V)$ to denote the unit sphere. We set $\infty$ as the base point of $S^{V}$ and we use $S(V)_{+}$ to denote the union of $S(V)$ with a disjoint base point.

Let $X,Y,Z$ be based finite $H$-CW complexes (see for example \cite[Chapter I]{tom2011transformation} for definition). We use the notation $[X,Y]^{H}$ to denote the set of homotopy classes of based $H$-maps from $X$ to $Y$ (i.e. maps that preserve the base point and are equivariant under $H$). 

Given any $V,W\in \mathbf{R}_{H}$ with $V\subset W$, let $V^{\perp}$ be the orthogonal complement of $V$ in $W$. Then smashing with the identity map on $S^{V^{\perp}}$ provides a map:
$$
[S^{V}\wedge X, S^{V}\wedge Y]^{H}\rightarrow [S^{W}\wedge X,S^{W}\wedge Y]^{H}
$$
One can check that these maps make the collection $$\{[S^{V}\wedge X, S^{V}\wedge Y]^{H}\}_{V\in \mathbf{R}_{H}}$$ into a direct system. We define $\{X,Y\}^{H}$ as the direct limit of this system. 
As in the nonequivariant case, the set $\{X,Y\}^{H}$ is actually an abelian group. A based $H$-map 
$$
S^{V}\wedge X\rightarrow S^{V}\wedge Y \text{ for }V\in \mathbf{R}_{H}
$$
will be called a stable $H$-map from $X$ to $Y$. An element in the group $\{X,Y\}^{H}$ will be called a stable homotopy class of $H$-maps.

\begin{fact}Given any based $H$-map $f:X\rightarrow Y$, we  form the mapping cone $Cf$ and let $i:Y\rightarrow Cf$  be the natural inclusion. Then for any $Z$, the functor $\{*,Z\}^{H}$ is a generalized cohomology theory \cite[Page 157]{May}.  As a result,  there is a long exact sequence 
\begin{equation}\label{long exact sequence}
\cdots \rightarrow\{S^{\mathbb{R}}\wedge X,Z\}^{H}\xrightarrow{\partial}\{Cf,Z\}^{H}\xrightarrow{i^{*}} \{Y,Z\}^{H}\xrightarrow{f^{*}} \{X,Z\}^{H}\xrightarrow{\partial} \{Cf,S^{\mathbb{R}}\wedge  Z\}^{H}\rightarrow \cdots
\end{equation}
associated to the cofiber sequence $X\xrightarrow{f} Y\xrightarrow{i}Cf
$. 
\end{fact}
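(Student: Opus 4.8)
The plan is to deduce this from two standard ingredients of the $H$-equivariant Spanier–Whitehead category: the Puppe (cofiber) sequence, and the elementary exactness of $\{-,Z\}^{H}$ on a single mapping-cone triple. First I would recall the equivariant Puppe sequence. Given $f\colon X\to Y$ with mapping cone $Cf$ and inclusion $i\colon Y\to Cf$, there are canonical $H$-maps prolonging the sequence,
\[
X\xrightarrow{f}Y\xrightarrow{i}Cf\xrightarrow{p}S^{\mathbb{R}}\wedge X\xrightarrow{-S^{\mathbb{R}}\wedge f}S^{\mathbb{R}}\wedge Y\xrightarrow{-S^{\mathbb{R}}\wedge i}S^{\mathbb{R}}\wedge Cf\to\cdots,
\]
with the property that any three consecutive terms again form, up to a canonical $H$-homotopy equivalence, a cofiber sequence of the same shape. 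The only input here that is not pure formalism is the identification $C(i)\simeq S^{\mathbb{R}}\wedge X$ — the cofiber of a cofiber is a suspension — which is proved exactly as in the nonequivariant case by collapsing the $H$-contractible cone on $Y$ inside $C(i)=Cf\cup_{Y}CY$; this collapse is an $H$-homotopy equivalence because $Y\hookrightarrow Cf$ is an $H$-cofibration and every space in sight is built $H$-cellularly.

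Second, I would prove the basic exactness lemma: for any $H$-map $g\colon A\to B$ with mapping cone $Cg$ and inclusion $j\colon B\to Cg$, the sequence
\[
\{Cg,Z\}^{H}\xrightarrow{j^{*}}\{B,Z\}^{H}\xrightarrow{g^{*}}\{A,Z\}^{H}
\]
is exact at $\{B,Z\}^{H}$. The composite $g^{*}j^{*}$ is zero because $j\circ g$ factors through the $H$-contractible cone $CA$, hence is $H$-nullhomotopic. Conversely, a class in $\ker g^{*}$ is represented by an $H$-map $\varphi\colon S^{V}\wedge B\to S^{V}\wedge Z$ for some $V\in\mathbf{R}_{H}$ together with an $H$-nullhomotopy of $\varphi\circ(\mathrm{id}_{S^{V}}\wedge g)$; since $S^{V}\wedge(-)$ commutes with mapping cones, $S^{V}\wedge Cg=C(\mathrm{id}_{S^{V}}\wedge g)$, and the nullhomotopy is precisely the gluing data that extends $\varphi$ over this cone, producing a preimage under $j^{*}$. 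Together with homotopy invariance (immediate from the definition) this is exactly the assertion that $\{-,Z\}^{H}$ is a generalized cohomology theory on finite $H$-CW complexes.

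Finally I would splice. Applying the contravariant functor $\{-,Z\}^{H}$ to each three-term cofiber sequence in the Puppe sequence and using the exactness lemma at each stage yields exactness of
\[
\cdots\to\{S^{\mathbb{R}}\wedge Y,Z\}^{H}\to\{S^{\mathbb{R}}\wedge X,Z\}^{H}\xrightarrow{p^{*}}\{Cf,Z\}^{H}\xrightarrow{i^{*}}\{Y,Z\}^{H}\xrightarrow{f^{*}}\{X,Z\}^{H}.
\]
To continue past $\{X,Z\}^{H}$ I would use the suspension identification $\{A,Z\}^{H}\cong\{S^{\mathbb{R}}\wedge A,S^{\mathbb{R}}\wedge Z\}^{H}$ built into the direct-limit definition of $\{-,-\}^{H}$: rewrite $\{X,Z\}^{H}\cong\{S^{\mathbb{R}}\wedge X,S^{\mathbb{R}}\wedge Z\}^{H}$ and repeat the argument with $Z$ replaced by $S^{\mathbb{R}}\wedge Z$, which produces the boundary $\partial\colon\{X,Z\}^{H}\to\{Cf,S^{\mathbb{R}}\wedge Z\}^{H}$ and exactness beyond it; reconciling the sign conventions on the $-S^{\mathbb{R}}\wedge(-)$ maps with the stated $\partial$ then gives exactly the sequence in the Fact. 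I expect the only genuinely delicate points to be the equivariance of the cofiber-of-a-cofiber equivalence and the bookkeeping of suspensions and signs — everything else is formal — and the most economical route, the one the paper follows, is simply to cite \cite[Page 157]{May}, where $\{-,Z\}^{H}$ is verified to be a cohomology theory and the long exact sequence is its exactness axiom applied to $X\xrightarrow{f}Y\xrightarrow{i}Cf$.
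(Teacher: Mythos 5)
The paper offers no proof of this Fact beyond the citation to May (p.~157), and your Puppe-sequence argument is exactly the standard verification lying behind that citation: the three-term exactness lemma (null class $\Rightarrow$ the nullhomotopy glues to an extension over $S^{V}\wedge Cg=C(\mathrm{id}_{S^{V}}\wedge g)$, after enlarging $V$ so the nullhomotopy exists at the same suspension level), the $H$-equivalence $C(i)\simeq S^{\mathbb{R}}\wedge X$ from collapsing the contractible cone, and the suspension isomorphism $\{A,Z\}^{H}\cong\{S^{\mathbb{R}}\wedge A,S^{\mathbb{R}}\wedge Z\}^{H}$ used to continue the sequence past $\{X,Z\}^{H}$ all go through equivariantly as you indicate. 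So your proposal is correct and is in substance the same approach the paper takes, merely spelling out the proof the paper delegates to the reference.
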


\begin{fact}\label{Fact: quotient map} Suppose the $H$-action on $X$ is free away from the base point. Then there is a natural map
\begin{equation}\label{eq: quotient iso}
q_{H}: \{X,Y\}^{H}\rightarrow \{X/H,Y/H\}^{\{e\}}
\end{equation}
from the equivariant homotopy group to the nonequivariant homotopy group of the quotient space. This map is constructed as follows: Since the $H$-action on $X$ is free away from the base point, any $[f]\in \{X,Y\}^{H}$ can be represented by an $H$-map
$
f:S^{V}\wedge X\rightarrow S^{V}\wedge Y
$ such that the $H$-action on $V$ is trivial (see \cite[Proposition 5.5]{AdamsPrerequisites} and  \cite[Theorem 2.8 on Page 65]{LewisMaySteinberher}). The map $f$ induces a nonequivariant map between the quotient space.
$$
f/H:S^{V}\wedge (X/H)= (S^{V}\wedge X)/H\rightarrow (S^{V}\wedge Y)/H=S^{V}\wedge (Y/H).
$$
Then we define $q_{H}([f])$ as $[f/H]$. One can check that this does not depend on the choice of $f$ and $V$.
\end{fact}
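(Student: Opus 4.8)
The formula for $q_{H}$ is already pinned down by the construction in the statement, so the content of the Fact is really a well-definedness claim together with naturality. Concretely I would prove three things, in this order: (i) every class in $\{X,Y\}^{H}$ is represented by a stable map $f\colon S^{V}\wedge X\rightarrow S^{V}\wedge Y$ on which $H$ acts trivially on $V$; (ii) the nonequivariant class $[f/H]\in\{X/H,Y/H\}^{\{e\}}$ is independent of the choices of $V$, of the untwisting, and of the representative; (iii) the resulting map $q_{H}$ is additive and compatible with composition of stable maps (this is the sense in which it is ``natural'').

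For (i), the key input is the standard fact that, because $H$ acts freely on $X$ away from the base point, the twisted sphere coordinate can be absorbed: any element of $\{X,Y\}^{H}$ admits, after stabilizing by a suitable representation with trivial isotropy, a representative $f\colon S^{V}\wedge X\rightarrow S^{V}\wedge Y$ with $H$ acting trivially on $V$. This is exactly \cite[Proposition 5.5]{AdamsPrerequisites} (and \cite[Theorem 2.8, p.~65]{LewisMaySteinberher}), and I would simply cite it rather than reprove the underlying obstruction/equivariant-cellular-approximation argument. Given such an $f$, since $S^{V}$ carries the trivial action one has natural identifications $(S^{V}\wedge X)/H\cong S^{V}\wedge(X/H)$ and $(S^{V}\wedge Y)/H\cong S^{V}\wedge(Y/H)$, so $f$ descends to $f/H\colon S^{V}\wedge(X/H)\rightarrow S^{V}\wedge(Y/H)$, producing the candidate class $[f/H]$.

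The independence checks in (ii) come in three flavors. First, \emph{stabilization}: replacing $V$ by $V\oplus V'$ with $V'$ trivial replaces $f$ by $f\wedge\mathrm{id}_{S^{V'}}$, which descends to $(f/H)\wedge\mathrm{id}_{S^{V'}}$; since this is precisely a structure map of the direct system defining $\{X/H,Y/H\}^{\{e\}}$, the class is unchanged. Second, \emph{choice of untwisting}: if $f_{0},f_{1}$ are two trivial-$V$ representatives of the same $H$-homotopy class on the same $S^{V}$, they are $H$-homotopic, and applying the free-action representability result to the homotopy itself (an $H$-map $S^{V}\wedge X\wedge I_{+}\rightarrow S^{V}\wedge Y$) lets us assume the homotopy also has trivial $V$-action, hence descends to a nonequivariant homotopy $f_{0}/H\simeq f_{1}/H$. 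Third, \emph{choice of representative of the stable class}: two representatives of one element of $\{X,Y\}^{H}$ become $H$-homotopic after further trivial stabilization, reducing to the first two points. For (iii), additivity follows because the pinch map $X\rightarrow X\vee X$ giving the group structure is $H$-equivariant with trivial-isotropy sphere coordinates and descends to the pinch map on $X/H$, and $q_{H}(g\circ f)=q_{H}(g)\circ q_{H}(f)$ follows by composing the downstairs maps when $g$ is also represented on a trivial representation.

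I expect the only genuine subtlety — the reason the statement says ``one can check'' — to be the relative form of the free-action representability theorem used in the second point of (ii): one must know not just that individual stable maps but also $H$-homotopies between them can be untwisted, so that the quotient homotopy is actually available. Everything else is routine bookkeeping with direct limits and smash products.
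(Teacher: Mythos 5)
Your proposal matches the paper's treatment: the paper itself only gives the construction, cites \cite[Proposition 5.5]{AdamsPrerequisites} and \cite[Theorem 2.8, p.~65]{LewisMaySteinberher} for the untwisting step, and leaves the independence checks as ``one can check,'' which is exactly what you flesh out. The one subtlety you flag (untwisting $H$-homotopies, not just maps) is in fact already covered by the cited results, since they assert an isomorphism between the colimit over trivial suspension coordinates and the full colimit when the action on $X$ is free away from the base point, so your argument closes without any additional input.
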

\begin{fact} (See \label{fact: quotient isomorphism} \cite[Theorem 5.3]{AdamsPrerequisites} and \cite[Theorem 4.5 on Page 78]{LewisMaySteinberher}.) Suppose the $H$-action on $X$ is free away from the base point and the $H$-action on $Y$ is trivial. Then the map  $q_{H}$ is an isomorphism. \end{fact}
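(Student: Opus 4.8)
The plan is to factor $q_H$ through a comparison of direct limits: I will identify both the source $\{X,Y\}^H$ and the target $\{X/H,Y/H\}^{\{e\}}$ with the colimit of the \emph{same} direct system, namely $\{[S^n\wedge X,S^n\wedge Y]^H\}_n$ where $n$ ranges only over the \emph{trivial} representations $\mathbb R^n\in\mathbf R_H$.

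\emph{Unstable comparison.} Fix a trivial representation $\mathbb R^n$. Since $S^n\wedge Y$ carries the trivial $H$-action, any based $H$-map $f\colon S^n\wedge X\to S^n\wedge Y$ is constant on $H$-orbits, hence factors uniquely through the orbit projection $p_n\colon S^n\wedge X\to(S^n\wedge X)/H=S^n\wedge(X/H)$; conversely, precomposition with $p_n$ turns a based map $S^n\wedge(X/H)\to S^n\wedge Y$ into a based $H$-map. These two operations are mutually inverse and natural in $n$ — on both sides the structure maps are ``smash with $S^{\mathbb R}$'', which commutes with $-/H$ because the new coordinate is fixed — so they assemble to an isomorphism of direct systems. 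Passing to colimits and using that $X/H$ and $Y=Y/H$ are finite CW complexes, the right-hand colimit is $\{X/H,Y/H\}^{\{e\}}$; and unwinding the construction of $q_H$ in Fact~\ref{Fact: quotient map}, one checks that under this identification $q_H$ becomes the canonical map
\[
\mathrm{colim}_{n}\,[S^n\wedge X,S^n\wedge Y]^H\;\longrightarrow\;\mathrm{colim}_{V\in\mathbf R_H}\,[S^V\wedge X,S^V\wedge Y]^H=\{X,Y\}^H
\]
induced by including the trivial-representation subsystem into the full system.

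\emph{Cofinality and conclusion.} It remains to show that this canonical map is an isomorphism, and this is exactly where freeness of the $H$-action on $X$ is used; in fact it is precisely the fact already invoked in Fact~\ref{Fact: quotient map}: over a free $X$, every $H$-map $S^V\wedge X\to S^V\wedge Y$ is $H$-homotopic, after adding a trivial coordinate, to one defined on a trivial suspension coordinate, and two such that become stably equal already become equal after enlarging only the trivial part — so the trivial-representation subsystem is cofinal. This comes down to the case of a free cell, where one has an $H$-homeomorphism $S^V\wedge(H_+\wedge S^m)\cong_H H_+\wedge S^{m+\dim V}$ coming from the shearing homeomorphism $H\times V\cong H\times\mathbb R^{\dim V}$, assembled over the cells of $X$ by induction on skeleta; freeness is genuinely needed, since for a non-free $X$ the nontrivial representation spheres contribute elements invisible to the trivial-representation subsystem (already in $\{S^0,S^0\}^H$). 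Granting this, the displayed map is an isomorphism, and combined with the unstable comparison it shows $q_H$ is an isomorphism. I would not reprove the cofinality input, citing \cite[Theorem~5.3]{AdamsPrerequisites} and \cite[Theorem~4.5 on Page~78]{LewisMaySteinberher} (together with \cite[Proposition~5.5]{AdamsPrerequisites} and \cite[Theorem~2.8 on Page~65]{LewisMaySteinberher}) instead; the remainder is a formal manipulation of filtered colimits, so the only real obstacle in a fully self-contained proof would be the cell-by-cell assembly of the shearing homeomorphisms.
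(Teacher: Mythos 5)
Your proposal is correct and is essentially the same as the paper's treatment: the paper states this Fact without proof, citing Adams and Lewis--May--Steinberger, and your argument likewise defers the only substantive input (destabilization of a stable $H$-map over a free complex to trivial suspension coordinates, proved cell-by-cell via the shearing homeomorphism) to those same references, the remainder being the formal unstable orbit-factorization and colimit comparison. The one step you gloss --- identifying the colimit over the trivial representations in $\mathbf{R}_{H}$ with $\{X/H,Y/H\}^{\{e\}}$, whose indexing set $\mathbf{R}_{\{e\}}$ consists of all finite-dimensional subspaces of $U$, not just the trivial $H$-subrepresentations --- is harmless, since nonequivariantly every such $V$ is isomorphic to $\mathbb{R}^{\dim V}$ and the two direct systems have the same colimit.
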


For the rest of the section, we assume $X,Y$ are based, finite $G$-CW complexes. The next few facts concern various relations between the $G$-equivariant homotopy groups and the $S^{1}$-equivariant homotopy groups.

\begin{fact} (See \cite[Theorem 5.1]{AdamsPrerequisites}  and \cite[Theorem 4.7 on Page 79]{LewisMaySteinberher}.)
There is a natural isomorphism 
\begin{equation}
\iota:\{X,Y\}^{S^{1}}\xrightarrow{\cong} \{X\wedge (S(\tilde{\mathbb{R}})_{+}),Y\}^{G}  
\end{equation}
constructed as follows: Take any $[f]\in \{X,Y\}^{S^{1}}$ represented by an $S^{1}$-map
$$
f:S^{V}\wedge X\rightarrow S^{V}\wedge Y.
$$
By enlarging $V$ if necessary, we may assume $V\in \mathbf{R}_{G}$. Then we consider the $G$-map
$$
f':S^{V}\wedge X\wedge (S(\tilde{\mathbb{R}})_{+})=((S^{V}\wedge X) \times \{1\} )\vee ((S^{V}\wedge X) \times \{-1\} ) \rightarrow Y
$$
defined by setting 
$$
f'(x\times\{1\})=f(x)\text{ and } f'(x\times \{-1\}):=jf(j^{-1}x)$$ for any $x\in S^{V}\wedge X.
$
We let $\iota([f])=[f']$. This map $\iota$ turns out to be an isomorphism. 
\end{fact}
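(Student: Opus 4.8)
The plan is to recognize this statement as an instance of the change-of-groups (induction--restriction) adjunction for the index-two subgroup $S^{1}\subset G$ --- the adjunction that underlies the Wirthm\"uller isomorphism --- and then to check that the abstract adjunction isomorphism is realized by the explicit formula for $\iota$ given in the statement. First I would identify $S(\tilde{\mathbb{R}})_{+}$ with $(G/S^{1})_{+}$ as a based $G$-space: since $S^{1}$ acts trivially on $\tilde{\mathbb{R}}$ while $j$ acts by $-1$, the correspondence $1\leftrightarrow eS^{1}$, $-1\leftrightarrow jS^{1}$ is a $G$-equivariant bijection. Next I would invoke the standard untwisting homeomorphism of based $G$-spaces
\[
(G/S^{1})_{+}\wedge X\;\cong\;G_{+}\wedge_{S^{1}}\bigl(\Res^{G}_{S^{1}}X\bigr),\qquad gS^{1}\wedge y\longmapsto g\wedge g^{-1}y,
\]
where $\Res^{G}_{S^{1}}$ denotes restriction of the $G$-action. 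Combining these two identifications with the stable induction--restriction adjunction $G_{+}\wedge_{S^{1}}(-)\dashv\Res^{G}_{S^{1}}$ (see \cite[Theorem 5.1]{AdamsPrerequisites} and \cite[Chapter II, Theorem 4.7]{LewisMaySteinberher}) yields a natural isomorphism
\[
\{X\wedge S(\tilde{\mathbb{R}})_{+},Y\}^{G}\;\cong\;\{G_{+}\wedge_{S^{1}}\Res^{G}_{S^{1}}X,\,Y\}^{G}\;\cong\;\{\Res^{G}_{S^{1}}X,\Res^{G}_{S^{1}}Y\}^{S^{1}}\;=\;\{X,Y\}^{S^{1}},
\]
and one takes $\iota$ to be its inverse; naturality in $X$ and $Y$ is then automatic.

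To see that this coincides with the explicit $\iota$, I would unwind the adjunction at the unstable level. Given an $S^{1}$-map $f:S^{V}\wedge X\to S^{V}\wedge Y$, after enlarging $V$ we may assume $V\in\mathbf{R}_{G}$; this is legitimate because $\mathbf{R}_{G}$ is cofinal in $\mathbf{R}_{S^{1}}$ for the universes considered here, so that the colimit defining $\{X,Y\}^{S^{1}}$ is unchanged when taken over the subsystem indexed by $\mathbf{R}_{G}$. The $G$-map adjoint to $f$ is $g\wedge x\mapsto g\cdot f(x)$ on $G_{+}\wedge_{S^{1}}(S^{V}\wedge X)$; transporting it along the untwisting homeomorphism and reading off the two coset representatives $e$ and $j$, it becomes precisely the map $f'$ that restricts to $f$ on the copy of $S^{V}\wedge X$ over $+1$ and to $x\mapsto jf(j^{-1}x)$ on the copy over $-1$. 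One then checks directly that $f'$ is $G$-equivariant: $S^{1}$-equivariance over $+1$ is that of $f$, compatibility with $j$ holds by construction, and $S^{1}$-equivariance over $-1$ follows from the relation $je^{i\theta}=e^{-i\theta}j$ together with the $S^{1}$-equivariance of $f$. The inverse operation restricts a $G$-map to the copy over $+1$; the two operations are visibly mutually inverse on honest maps, and they respect homotopies and suspensions, hence descend to the asserted isomorphism on stable homotopy groups.

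I do not anticipate a genuine obstacle: the mathematical content is entirely the cited change-of-groups adjunction, and the only points requiring care are the cofinality remark used to pass from $\mathbf{R}_{S^{1}}$ to $\mathbf{R}_{G}$ in the colimit and the routine bookkeeping in the $j$-equivariance verification for $f'$. Alternatively, one may treat \cite[Theorem 5.1]{AdamsPrerequisites} and \cite[Chapter II, Theorem 4.7]{LewisMaySteinberher} as a black box and carry out only the final verification, namely that the stated formula for $\iota$ is well defined and defines an isomorphism.
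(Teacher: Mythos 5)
Your proposal is correct and follows essentially the same route as the paper: the paper states this Fact by citing exactly the change-of-groups (induction--restriction) results in Adams and Lewis--May--Steinberger and recording the explicit formula, and your argument just unwinds that adjunction via the identification $S(\tilde{\mathbb{R}})_{+}\cong (G/S^{1})_{+}$ and the untwisting homeomorphism, recovering the same map $f'$ (with the same cofinality remark allowing $V\in\mathbf{R}_{G}$). The only addition is your explicit check that the stated formula realizes the abstract adjunction, which the paper leaves implicit.
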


Next, we recall the two operations about changing groups, namely the restriction map 
\begin{equation}\label{eq: restriction}
\Res^{G}_{S^{1}}: \{X,Y\}^{G}\rightarrow \{X,Y\}^{S^{1}} 
\end{equation}
and the transfer map 
\begin{equation}\label{eq: transfer}
\Tr^{G}_{S^{1}}: \{X,Y\}^{S^{1}}\rightarrow \{X,Y\}^{G}.
\end{equation}
The restriction map is defined by simply ignoring the $j$-action. To define the transfer map, we consider the Pontrjagin-Thom map
$$p:S^{\tilde{\mathbb{R}}}\rightarrow S^{\tilde{\mathbb{R}}}\wedge S(\tilde{\mathbb{R}})_{+}$$ that crushes all points outside a normal neighborhood of   $S(\tilde{\mathbb{R}})$ in $S^{\tilde{\mathbb{R}}}$. (Here we identify the Thom space of 
the normal bundle of $S(\tilde{\mathbb{R}})$ as $S^{\tilde{\mathbb{R}}}\wedge (S(\tilde{\mathbb{R}})_{+})$.) Then the transfer map is defined as the composition
\begin{equation}\label{eq: defi transfer}
\{X,Y\}^{S^{1}}\xrightarrow{\iota} \{(S(\tilde{\mathbb{R}})_{+})\wedge X ,Y\}^{G}=  \{S^{\tilde{\mathbb{R}}}\wedge (S(\tilde{\mathbb{R}})_{+})\wedge X,S^{\tilde{\mathbb{R}}}\wedge Y\}^{G}\xrightarrow{p^{*}} \{S^{\tilde{\mathbb{R}}}\wedge X,S^{\tilde{\mathbb{R}}}\wedge Y\}^{G}=\{X,Y\}^{G} .
\end{equation}
To describe the composition of transfer and restriction, we define the conjugation map
\begin{equation}\label{conjugation map}
c_{j}:\{X,Y\}^{S^{1}}\rightarrow \{X,Y\}^{S^{1}}
\end{equation}
as follows: 
Take any element $[f]\in \{X,Y\}^{S^{1}}$ represented by a $S^{1}$-map $f:S^{V}\wedge X\rightarrow S^{V}\wedge Y$. By enlarging $V$ if necessary, we may assume $V\in \mathbf{R}_{G}$. Then $c_{j}([f])$ is represented by the composition $$
S^{V}\wedge X\xrightarrow{j^{-1}}  S^{V}\wedge X\xrightarrow {f} S^{V}\wedge Y \xrightarrow{j }S^{V}\wedge Y.
$$
Note that when the $S^{1}$-action on $X$ is free away from the base point, the maps $c_{j}$ and the map $q_{S^{1}}$ defined in (\ref{eq: quotient iso}) are compatible. That means 
\begin{equation}\label{quotient conjugation}
q_{S^{1}}(c_{j}(\alpha))= j\circ q_{S^{1}}(\alpha)\circ j^{-1} ,\forall \alpha\in \{X,Y\}^{S^{1}}.
\end{equation}
Here $j$ and $j^{-1}$ are treated as elements in $\{Y/S^{1},Y/S^{1}\}^{\{e\}}$ and  $\{X/S^{1},X/S^{1}\}^{\{e\}}$ respectively.

The following fact is a special case of the double coset formula \cite[Chapter XVIII, Theorem 4.3]{May}. It can be verified directly by unwinding the definitions.

\begin{fact}
For any $\alpha \in\{X,Y\}^{S^{1}}$, one has 
\begin{equation}\label{eq: double coset formula}
    \Res^{G}_{S^{1}}\Tr^{G}_{S^{1}}(\alpha)=\alpha+c_{j}(\alpha)
\end{equation}

\end{fact}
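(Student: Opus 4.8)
The plan is to prove the identity directly, by unwinding the definitions of $\iota$, $\Tr^{G}_{S^{1}}$, $\Res^{G}_{S^{1}}$ and $c_{j}$ recorded above. The mechanism is simple: restricting to the subgroup $S^{1}$ turns the representation $\tilde{\mathbb{R}}$ into the trivial representation $\mathbb{R}$, turns the $G$-set $S(\tilde{\mathbb{R}})=\{+1,-1\}$ into a pair of points with trivial $S^{1}$-action, and turns the Pontrjagin--Thom map $p\colon S^{\tilde{\mathbb{R}}}\to S^{\tilde{\mathbb{R}}}\wedge S(\tilde{\mathbb{R}})_{+}$ into the ordinary pinch map $\nabla\colon S^{\mathbb{R}}\to S^{\mathbb{R}}\vee S^{\mathbb{R}}$. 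Since precomposition with $\nabla$ is exactly the operation of adding the two components of a map out of a wedge, and the two components will turn out to represent $\alpha$ and $c_{j}(\alpha)$, the formula follows.

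In detail: represent $\alpha$ by an $S^{1}$-map $f\colon S^{V}\wedge X\to S^{V}\wedge Y$ with $V\in\mathbf{R}_{G}$. Unwinding \eqref{eq: defi transfer} together with the definition of $\iota$, $\Tr^{G}_{S^{1}}(\alpha)$ is represented by the $G$-map
$$S^{\tilde{\mathbb{R}}}\wedge S^{V}\wedge X \xrightarrow{\ p\wedge\mathrm{id}\ } S^{\tilde{\mathbb{R}}}\wedge S(\tilde{\mathbb{R}})_{+}\wedge S^{V}\wedge X \xrightarrow{\ \mathrm{id}\wedge f'\ } S^{\tilde{\mathbb{R}}}\wedge S^{V}\wedge Y ,$$
where $f'$ equals $f$ on the summand indexed by $+1$ and equals $x\mapsto j f(j^{-1}x)$ on the summand indexed by $-1$ (here I only commute the zero-dimensional factor $S(\tilde{\mathbb{R}})_{+}$ past $S^{V}\wedge X$, which introduces no sign). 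Now read this same composite $S^{1}$-equivariantly, i.e. apply $\Res^{G}_{S^{1}}$. As a nonequivariant map, $p$ is the collapse onto a tubular neighborhood of $\{+1,-1\}\subset S^{\mathbb{R}}$; projecting to each of the two wedge factors $S^{\mathbb{R}}\wedge\{\pm1\}$ gives a collapse map $S^{\mathbb{R}}\to S^{\mathbb{R}}$, and with the standard orientation conventions — the ones that make $p$ a $G$-map — both have local degree $+1$, so $\Res^{G}_{S^{1}}(p)=\nabla$. Consequently the restricted composite above is the sum of $\mathrm{id}_{S^{\mathbb{R}}}\wedge f$ (coming from the $+1$ summand) and $\mathrm{id}_{S^{\mathbb{R}}}\wedge(jfj^{-1})$ (coming from the $-1$ summand) in $\{S^{\mathbb{R}}\wedge S^{V}\wedge X,\,S^{\mathbb{R}}\wedge S^{V}\wedge Y\}^{S^{1}}$. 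The first term represents $\alpha$ and, by the definition \eqref{conjugation map} of $c_{j}$, the second represents $c_{j}(\alpha)$; hence $\Res^{G}_{S^{1}}\Tr^{G}_{S^{1}}(\alpha)=\alpha+c_{j}(\alpha)$.

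The only delicate point — and the one I would check most carefully — is the sign/orientation bookkeeping: one must verify that the two components of the restricted Pontrjagin--Thom collapse carry the \emph{same} local degree (so that the two terms add rather than cancel) and that neither the $j$-action on the $S^{\tilde{\mathbb{R}}}$ factor nor the shuffling of smash coordinates contributes a stray sign. Concretely this amounts to the observation that $j$ interchanges the two normal neighborhoods of $+1$ and $-1$ in $S^{\tilde{\mathbb{R}}}$ by an orientation-\emph{reversing} diffeomorphism, matching the fact that $j$ also acts by $-1$ on the $S^{\tilde{\mathbb{R}}}$ factor of the target; this is precisely what forces both components of $p$ to have local degree $+1$. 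Once this is pinned down, everything else is formal. Alternatively, one may simply invoke the general double coset formula \cite[Chapter XVIII, Theorem 4.3]{May} for the normal index-two subgroup $S^{1}\leq G$: the double coset space $S^{1}\backslash G/S^{1}$ has representatives $e$ and $j$, contributing the identity and $c_{j}$ respectively.
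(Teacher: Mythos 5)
Your proposal is correct and matches the paper's treatment: the paper simply cites the double coset formula \cite[Chapter XVIII, Theorem 4.3]{May} and remarks that the identity can be checked by unwinding the definitions, which is exactly the verification you carry out (and your orientation bookkeeping for the two components of the Pontrjagin--Thom collapse, forced by $G$-equivariance of the Thom space identification, is the right way to see that the two terms add rather than cancel). Nothing further is needed.
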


We end this subsection by an alternative description of the image of $\Tr^{G}_{S^{1}}$. 
\begin{lem}\label{lem: image of transfer}
Let $e_{\tilde{\mathbb{R}}}\in \{S^{0}, S^{\tilde{\mathbb{R}}}\}^{G}$ be the element represented by the inclusion map 
\begin{equation}\label{eq: euler class}
S^{0}=\{0,\infty\}\hookrightarrow S^{\tilde{\mathbb{R}}}.
\end{equation}
(This element is called the Euler class of $\tilde{\mathbb{R}}$.) Then the kernel of the map
\begin{equation}\label{eq: multiplication euler}
\{X,Y\}^{G}\xrightarrow{e_{\tilde{\mathbb{R}}}\cdot } \{ X,S^{\tilde{\mathbb{R}}}\wedge Y\}^{G}
\end{equation}
equals the image of the transfer map (\ref{eq: transfer}).
\end{lem}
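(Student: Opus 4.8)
The plan is to identify the cofiber sequence that exhibits $e_{\tilde{\mathbb R}}$-multiplication as one of the maps in a long exact sequence of the form (\ref{long exact sequence}), and then to match the connecting/boundary map with the transfer map $\Tr^G_{S^1}$ up to the isomorphism $\iota$. The key observation is that there is a $G$-cofiber sequence
\[
S(\tilde{\mathbb R})_+ \longrightarrow S^0 \xrightarrow{\ e_{\tilde{\mathbb R}}\ } S^{\tilde{\mathbb R}},
\]
where the first map is the projection $S(\tilde{\mathbb R})_+=\{1,-1\}_+ \to S^0$ collapsing $S(\tilde{\mathbb R})$ to the non-basepoint, and the second is the inclusion (\ref{eq: euler class}); indeed, the unreduced mapping cone of $S(\tilde{\mathbb R})\hookrightarrow D(\tilde{\mathbb R})$ is exactly $S^{\tilde{\mathbb R}}$, and cofiber sequences are preserved under smashing with the (compact) complexes $X$ and $Y$. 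Smashing this sequence with $X$ and mapping into $Y$ (more precisely, into $S^{\tilde{\mathbb R}}\wedge Y$ to line up the gradings) produces, via Fact (\ref{long exact sequence}), an exact sequence
\[
\{ S(\tilde{\mathbb R})_+\wedge X,\ Y\}^G \xrightarrow{\ \partial\ } \{X,\ Y\}^G \xrightarrow{\ e_{\tilde{\mathbb R}}\cdot\ } \{X,\ S^{\tilde{\mathbb R}}\wedge Y\}^G .
\]
So the kernel of $e_{\tilde{\mathbb R}}\cdot$ is precisely the image of $\partial$, and it remains to see that this image coincides with the image of $\Tr^G_{S^1}$.

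For that, first apply the isomorphism $\iota:\{X,Y\}^{S^1}\xrightarrow{\cong}\{X\wedge S(\tilde{\mathbb R})_+,\ Y\}^G$ to rewrite the source of $\partial$ as $\{X,Y\}^{S^1}$. Then I would trace through the definition of the boundary map $\partial$ in (\ref{long exact sequence}): it is induced by the connecting map in the cofiber sequence, which up to suspension is the Pontrjagin--Thom collapse $p:S^{\tilde{\mathbb R}}\to S^{\tilde{\mathbb R}}\wedge S(\tilde{\mathbb R})_+$ that already appears in the definition (\ref{eq: defi transfer}) of the transfer. Concretely, the cofiber sequence $S(\tilde{\mathbb R})_+\to S^0\to S^{\tilde{\mathbb R}}$ continues (after one suspension, i.e. smashing with $S^{\tilde{\mathbb R}}$) as $S^{\tilde{\mathbb R}}\to S^{\tilde{\mathbb R}}\wedge S^{\tilde{\mathbb R}}\to S^{\tilde{\mathbb R}}\wedge S(\tilde{\mathbb R})_+$, and the last map here is exactly $\mathrm{id}\wedge p$ up to homotopy (both are the Thom collapse of the normal neighborhood of $S(\tilde{\mathbb R})$). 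Chasing this identification through, the composite $\{X,Y\}^{S^1}\xrightarrow{\iota}\{X\wedge S(\tilde{\mathbb R})_+,Y\}^G\xrightarrow{\partial}\{X,Y\}^G$ becomes, term by term, the composite written out in (\ref{eq: defi transfer}); hence $\partial\circ\iota=\Tr^G_{S^1}$, and $\operatorname{im}\partial=\operatorname{im}\Tr^G_{S^1}$.

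The routine parts—exactness of (\ref{long exact sequence}), the fact that $S^{\tilde{\mathbb R}}$ is the mapping cone of $S(\tilde{\mathbb R})\hookrightarrow D(\tilde{\mathbb R})$, the grading bookkeeping with the extra $S^{\tilde{\mathbb R}}$ factor—I would state and dispatch quickly. The main obstacle I anticipate is the last step: pinning down the boundary map of the Puppe sequence precisely enough to see it is the transfer, rather than the transfer composed with some automorphism (e.g. a sign, or the action of $c_j$, or a reflection of $S(\tilde{\mathbb R})$). This requires care with orientations and with the two descriptions of the Thom collapse, and is really where ``it can be verified directly by unwinding the definitions'' does the work. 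One clean way to avoid sign ambiguities is to check the claim after restricting to $\{e\}$ or $S^1$ and using Fact (\ref{Fact: quotient map})/the double coset formula (\ref{eq: double coset formula}) as a consistency check: $\Res^G_{S^1}$ of the image of $\partial$ should land in $\{\alpha + c_j(\alpha)\}$, which it does since $e_{\tilde{\mathbb R}}$ restricts to a non-zero-divisor is false—rather, $\Res^G_{S^1}(e_{\tilde{\mathbb R}})$ is the Euler class of a trivial $S^1$-representation, hence invertible, forcing $\Res^G_{S^1}(\ker(e_{\tilde{\mathbb R}}\cdot))$ to be consistent with the double coset formula; this gives a sanity check that the boundary map is genuinely the transfer and not an exotic variant.
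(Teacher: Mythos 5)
Your argument is correct and is in substance the same computation as the paper's, but you read the Puppe sequence one stage earlier than the paper does, and that choice is exactly what creates the step you flag as delicate. You work with $S(\tilde{\mathbb{R}})_{+}\to S^{0}\xrightarrow{e_{\tilde{\mathbb{R}}}} S^{\tilde{\mathbb{R}}}$, so multiplication by $e_{\tilde{\mathbb{R}}}$ appears as the map induced by the cone inclusion and the transfer has to be recognized inside the \emph{connecting} map $\partial$, which forces you to identify the Puppe boundary with the Pontrjagin--Thom collapse $p$. The paper instead applies $\{-,S^{\tilde{\mathbb{R}}}\wedge Y\}^{G}$ to the rotated sequence $S^{0}\hookrightarrow S^{\tilde{\mathbb{R}}}\xrightarrow{p} S^{\tilde{\mathbb{R}}}\wedge S(\tilde{\mathbb{R}})_{+}$ (smashed with $X$), so exactness at the middle term literally reads $\ker(e_{\tilde{\mathbb{R}}}\cdot)=\operatorname{im}(p^{*})$, and $\operatorname{im}(p^{*})=\operatorname{im}(\Tr^{G}_{S^{1}})$ is immediate from the definition (\ref{eq: defi transfer}) together with the fact that $\iota$ is an isomorphism; no boundary map ever has to be unwound. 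That said, the ambiguity you worry about (the connecting map agreeing with the transfer only up to a sign, a reflection of $S(\tilde{\mathbb{R}})$, or $c_{j}$) is actually harmless for this lemma: each such discrepancy is either precomposition of $\Tr^{G}_{S^{1}}\circ\iota^{-1}$ with an automorphism of the source group or postcomposition with $-\mathrm{id}$, and neither changes the image, which is all that is claimed. So your proof closes without the careful orientation bookkeeping you anticipate.

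One misstatement in your closing sanity check: $\Res^{G}_{S^{1}}(e_{\tilde{\mathbb{R}}})$ is the Euler class of the \emph{trivial} one-dimensional $S^{1}$-representation, hence it is zero (the inclusion $S^{0}\hookrightarrow S^{\mathbb{R}}$ is $S^{1}$-equivariantly null-homotopic), not invertible; indeed the paper's whole strategy hinges on $e_{\tilde{\mathbb{R}}}$ dying under restriction to $S^{1}$. Since that check is auxiliary, this does not affect the validity of your argument, but the consistency test as written would not run.
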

\begin{proof}
There is a cofiber sequence $S^{0}\hookrightarrow S^{\tilde{\mathbb{R}}}\xrightarrow{p}  S^{\tilde{\mathbb{R}}}\wedge S(\tilde{\mathbb{R}})_{+}
$. Smashing this sequence with $X$ and applying the functor $\{*,S^{\tilde{\mathbb{R}}}\wedge Y\}^{G}$, we get the exact sequence:
$$
\{(S^{\tilde{\mathbb{R}}}\wedge S(\tilde{\mathbb{R}})_{+})\wedge X,S^{\tilde{\mathbb{R}}}\wedge Y\}\xrightarrow {p_{*}} \{S^{\tilde{\mathbb{R}}}\wedge X,S^{\tilde{\mathbb{R}}}\wedge Y\}^{G}\xrightarrow{e_{\tilde{\mathbb{R}}}\cdot} \{ X,S^{\tilde{\mathbb{R}}}\wedge Y\}^{G}
$$
So we see that image of $p_{*}$ equals kernel of the map (\ref{eq: multiplication euler}). The lemma follows from definition of $\Tr^{G}_{S^{1}}$ (see (\ref{eq: defi transfer})).
\end{proof}

\subsubsection{The characteristic homomorphism} In this subsection, we define the characteristic homomorphism 
$$t:\{S^{a\mathbb{R}+b\mathbb{H}},S^{d\tilde{\mathbb{R}}}\}^{S^{1}}\rightarrow \mathbb{Z},
$$ following \cite{BauerFuruta1},
where $a,b,c$ are nonnegative integers with $d\geq a+2$. This homomorphism is of interest to us because the (family) Seiberg-Witten invariant can be obtained by applying $t$ on the Bauer-Furuta invariant. Note that although $\tilde{\mathbb{R}}$ is trivial as an $S^{1}$-representation, we still distinguish it with $\mathbb{R}$ in order to keep track of the $j$-action. 

To define $t$, we take the smash product of the cofiber sequence $$S^{0}\rightarrow S^{b\mathbb{H}}\rightarrow S^{\mathbb{R}}\wedge S(b\mathbb{H})_{+}$$ with the sphere $S^{a\mathbb{R}}$ and get the cofiber sequence 
\begin{equation}\label{eq: cofiber sequence}
S^{a\mathbb{R}}\rightarrow S^{a\mathbb{R}+b\mathbb{H}}\rightarrow S^{(a+1)\mathbb{R}}\wedge S(b\mathbb{H})_{+} .    
\end{equation}
This induces the long exact sequence
\begin{equation}\label{eq: long exact 1}
\cdots \rightarrow\{S^{(a+1)\mathbb{R}},S^{d\tilde{\mathbb{R}}}\}^{S^{1}}\rightarrow \{S^{(a+1)\mathbb{R}}\wedge S(b\mathbb{H})_{+}  ,S^{d\tilde{\mathbb{R}}}\}^{S^{1}}\rightarrow \{S^{a\mathbb{R}+b\mathbb{H}},S^{d\tilde{\mathbb{R}}}\}^{S^{1}}\rightarrow \{S^{a\mathbb{R}},S^{d\tilde{\mathbb{R}}}\}^{S^{1}}\rightarrow \cdots
\end{equation}
Since $d\geq a+2$, the equivariant Hopf theorem \cite[Section 8.4]{dieck2006transformation} states that the stable homotopy class of an $S^{1}$-equivariant stable map from $S^{a\mathbb{R}}$ or $S^{(a+1)\mathbb{R}}$ to $S^{d\tilde{\mathbb{R}}}$ is determined by its mapping degree on the $S^{1}$-fixed point sets. Since this mapping degree is always zero for dimension reason, we get $$\{S^{a\mathbb{R}},S^{d\tilde{\mathbb{R}}}\}^{S^{1}}=\{S^{(a+1)\mathbb{R}},S^{d\tilde{\mathbb{R}}}\}^{S^{1}}=0.$$
Therefore, we get an isomorphism 
\begin{equation}\label{eq: rep sphere to unit sphere}
\xi:\{S^{(a+1)\mathbb{R}}\wedge S(b\mathbb{H})_{+}  ,S^{d\tilde{\mathbb{R}}}\}^{S^{1}}\xrightarrow{\cong} \{S^{a\mathbb{R}+b\mathbb{H}},S^{d\tilde{\mathbb{R}}}\}^{S^{1}}.
\end{equation}
Note that the $S^{1}$-action on $S^{(a+1)\mathbb{R}}\wedge S(b\mathbb{H})_{+}$ is free away from the base point, with quotient space $S^{(a+1)\mathbb{R}}\wedge \mathbb{C}P^{2b-1}_{+}$. By composing  $\xi^{-1}$ with the isomorphism $q_{S^{1}}$ given in (\ref{eq: quotient iso}), we get the following isomorphism 
\begin{equation}
\psi=q_{S^{1}}\circ \xi^{-1}: \{S^{a\mathbb{R}+b\mathbb{H}},S^{d\tilde{\mathbb{R}}}\}^{S^{1}}\xrightarrow{\cong} \{S^{(a+1)\mathbb{R}}\wedge \mathbb{C}P^{2b-1}_{+}, S^{d\tilde{\mathbb{R}}}\}^{\{e\}}
\end{equation}
\begin{defi}
Suppose $d-a$ is an odd number less or equal to $4b-1$. Then we define the characteristic homomorphism 
$$t:\{S^{a\mathbb{R}+b\mathbb{H}},S^{d\tilde{\mathbb{R}}}\}^{S^{1}}\rightarrow \mathbb{Z},
$$ 
by setting $t(\alpha)$ as the image of $1$ under the induced map on the  reduced cohomology 
$$
(\psi(\alpha))^{*}: \mathbb{Z}=\tilde{H}^{d}(S^{d\tilde{\mathbb{R}}})\rightarrow \tilde{H}^{d}(S^{(a+1)\mathbb{R}}\wedge \mathbb{C}P^{2b-1}_{+})\cong \mathbb{Z}.$$
Here we use the standard orientations on $S^{d\tilde{\mathbb{R}}}$, $S^{(a+1)\mathbb{R}}$ and $\mathbb{C}P^{\frac{d-a-1}{2}} $ to identify the homology groups as $\mathbb{Z}$. If  either $d-a$ is even or $d-a>4b-1$, we simply define $t$ as the zero map. 
\end{defi}

To discuss the behavior of $t$ under the conjugation map $c_{j}$ defined in (\ref{conjugation map}), we prove the following lemma.
\begin{lem}\label{lem: tau conjugation}
For any $\alpha\in  \{S^{a\mathbb{R}+b\mathbb{H}},S^{d\tilde{\mathbb{R}}}\}^{S^{1}}$, we have $$\psi(c_{j}(\alpha))=(-1)^{d} m\circ \psi(\alpha)$$ where $m\in  \{\mathbb{C}P^{2b-1}_{+},\mathbb{C}P^{2b-1}_{+} \}^{\{e\}}$ is the ``mirror reflection map'' defined as
$$
m([z_{1},z_{2},z_{3},z_{4},\cdots, z_{2b-1},z_{2b}])=([-\bar{z}_{2},\bar{z}_{1},-\bar{z}_{4},\bar{z}_{3},\cdots, -\bar{z}_{2b},\bar{z}_{2b-1}]) \text{ for }z_{i}\in \mathbb{C}.
$$
\end{lem}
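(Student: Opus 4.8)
The plan is to unwind the three ingredients in the definition of $\psi=q_{S^{1}}\circ\xi^{-1}$ — the conjugation $c_{j}$, the isomorphism $\xi$ coming from the cofiber sequence (\ref{eq: cofiber sequence}), and the quotient isomorphism $q_{S^{1}}$ of Fact \ref{Fact: quotient map} — and to chase the $j$-action through each step.

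First I would record the relevant $j$-actions. On $S^{a\mathbb{R}+b\mathbb{H}}$ the element $j$ acts trivially on $S^{a\mathbb{R}}$ and by left quaternionic multiplication on $S^{b\mathbb{H}}$; on $S^{d\tilde{\mathbb{R}}}$ it is $-\mathrm{id}$ on each coordinate, hence a self-map of degree $(-1)^{d}$; and on $S^{(a+1)\mathbb{R}}\wedge S(b\mathbb{H})_{+}$ it is trivial on $S^{(a+1)\mathbb{R}}$ and is left multiplication by $j$ on $S(b\mathbb{H})$. Identifying $\mathbb{H}=\mathbb{C}\oplus\mathbb{C}j$ and $S(b\mathbb{H})\subset\mathbb{C}^{2b}$, the computation $j\cdot(z_{1}+z_{2}j)=-\bar z_{2}+\bar z_{1}j$ shows that left multiplication by $j$ carries $S^{1}$-orbits to $S^{1}$-orbits (since $je^{i\theta}=e^{-i\theta}j$ in $\Pin$) and descends along $S(b\mathbb{H})\to\mathbb{C}P^{2b-1}$ to precisely the map $m$ of the statement; moreover $j^{2}=-1\in S^{1}$, so $m^{2}=\mathrm{id}$ and $m^{-1}=m$.

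Next I would observe that the cofiber sequence defining $\xi$ is $\Pin$-equivariant: the inclusion $S^{0}=\{0,\infty\}\hookrightarrow S^{b\mathbb{H}}$ fixes both points, so $S^{0}\to S^{b\mathbb{H}}\to S^{\mathbb{R}}\wedge S(b\mathbb{H})_{+}$, and hence (\ref{eq: cofiber sequence}) after smashing with $S^{a\mathbb{R}}$, is $G$-equivariant with the suspension coordinate carrying the \emph{trivial} representation $\mathbb{R}$. In particular the projection $\pi\colon S^{a\mathbb{R}+b\mathbb{H}}\to S^{(a+1)\mathbb{R}}\wedge S(b\mathbb{H})_{+}$ intertwines the $j$-actions. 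Now represent $\xi^{-1}(\alpha)$ by an honest $S^{1}$-map $g\colon S^{V}\wedge W\to S^{V}\wedge S^{d\tilde{\mathbb{R}}}$, where $W=S^{(a+1)\mathbb{R}}\wedge S(b\mathbb{H})_{+}$ and, since $G$ (not only $S^{1}$) acts freely on $W$ away from the basepoint, $V$ may be taken to be a trivial $G$-representation. Then $\alpha$ is represented by $g\circ(\mathrm{id}_{S^{V}}\wedge\pi)$; commuting $\pi$ past the $j$-action shows $c_{j}(\alpha)$ is represented by $\tilde g\circ(\mathrm{id}_{S^{V}}\wedge\pi)$, where $\tilde g=(\mathrm{id}_{S^{V}}\wedge j)\circ g\circ(\mathrm{id}_{S^{V}}\wedge j^{-1})$, the two $j$'s denoting the actions on $S^{d\tilde{\mathbb{R}}}$ and on $W$. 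Hence $\xi^{-1}(c_{j}(\alpha))=[\tilde g]$, and applying $q_{S^{1}}$ — using that forming the $S^{1}$-quotient is compatible with the orbit-preserving maps involved — expresses $\psi(c_{j}(\alpha))=q_{S^{1}}(\tilde g)$ as $\psi(\alpha)=q_{S^{1}}(g)$ post-composed with the degree $(-1)^{d}$ map of $S^{d\tilde{\mathbb{R}}}$ and pre-composed with $\mathrm{id}_{S^{(a+1)\mathbb{R}}}\wedge m^{-1}=\mathrm{id}_{S^{(a+1)\mathbb{R}}}\wedge m$. This yields $\psi(c_{j}(\alpha))=(-1)^{d}\,\psi(\alpha)\circ(\mathrm{id}_{S^{(a+1)\mathbb{R}}}\wedge m)$, which is the asserted formula $\psi(c_{j}(\alpha))=(-1)^{d}\,m\circ\psi(\alpha)$.

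I expect the only real difficulty to be bookkeeping rather than conceptual. One must pin down a model of the cofiber sequence in which $\pi$ is visibly $G$-equivariant with trivial suspension direction, so that no stray sign is introduced there, and check that the passages through $\xi$ and through $q_{S^{1}}$ genuinely commute with pre- and post-composition by the non-equivariant maps $m$ and the $j$-action on $S^{d\tilde{\mathbb{R}}}$. Choosing the suspension coordinate $V$ to be $G$-trivial — legitimate because $G$ acts freely on $W$ away from the basepoint, so the representability result cited in Fact \ref{Fact: quotient map} applies with $H=G$ — removes the one place a sign could otherwise slip in unnoticed, since then both $j$-twists by $V$ in $\tilde g$ are literally the identity.
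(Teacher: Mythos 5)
Your argument is correct and follows essentially the same route as the paper: you unwind $\psi=q_{S^{1}}\circ\xi^{-1}$, use the $G$-equivariance of the projection $S^{a\mathbb{R}+b\mathbb{H}}\to S^{(a+1)\mathbb{R}}\wedge S(b\mathbb{H})_{+}$ to commute $c_{j}$ past $\xi$, invoke the compatibility (\ref{quotient conjugation}) of $q_{S^{1}}$ with conjugation, and identify the descended $j$-actions as the degree-$(-1)^{d}$ map on $S^{d\tilde{\mathbb{R}}}$ and (a suspension of) $m$ on $\mathbb{C}P^{2b-1}_{+}$, exactly as in the paper's proof, only with the details made explicit. The one small imprecision is your appeal to Fact \ref{Fact: quotient map} with $H=G$ to make $V$ $G$-trivial even though $[g]$ is only an $S^{1}$-class; the correct invocation is with $H=S^{1}$ (the $S^{1}$-action on $W$ is free), choosing $V$ inside the $\mathbb{R}$-summands of the universe, which yields the same conclusion.
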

\begin{proof}
By formula (\ref{quotient conjugation}),  $\psi(c_{j}(\alpha))$ equals the composition of $\psi(\alpha)$ with the elements $$j\in \{S^{d\tilde{\mathbb{R}}},S^{d\tilde{\mathbb{R}}}\}^{\{e\}}$$
and 
$$j^{-1}\in \{S^{(a+1)\mathbb{R}}\wedge \mathbb{C}P^{2b-1}_{+},S^{(a+1)\mathbb{R}}\wedge \mathbb{C}P^{2b-1}_{+}\}^{\{e\}},$$ which are just $(-1)^{d}$ and a suspension of $m$ respectively.
\end{proof}
\begin{cor}\label{cor: conjugate char homo}
When $d-a$ is odd, we have $t(c_{j}(\alpha))=(-1)^{\frac{3d-a-1}{2}}t(\alpha)$ for any $\alpha$.
\end{cor}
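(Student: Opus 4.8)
The strategy is to push the identity of Lemma~\ref{lem: tau conjugation} through the definition of the characteristic homomorphism on reduced cohomology. If $d-a$ is odd but $d-a>4b-1$, then $t$ is by definition the zero map and the corollary holds trivially, so I would assume $d-a\le 4b-1$; then $\tfrac{d-a-1}{2}\le 2b-1$, the power $x^{(d-a-1)/2}$ generates $H^{d-a-1}(\mathbb{C}P^{2b-1})$, and $t(\alpha)$ is by definition read off from the image of $1$ under
\[
\psi(\alpha)^{*}\colon \tilde H^{d}(S^{d\tilde{\mathbb R}})\longrightarrow \tilde H^{d}\bigl(S^{(a+1)\mathbb R}\wedge \mathbb{C}P^{2b-1}_{+}\bigr),
\]
using the prescribed identification of the target with $\mathbb Z$ coming from the standard orientations.

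First I would apply $\tilde H^{d}(-)$ to Lemma~\ref{lem: tau conjugation}. In the usual (contravariant) order, that lemma together with its proof via (\ref{quotient conjugation}) gives $\psi(c_{j}(\alpha))= j\circ\psi(\alpha)\circ(\Sigma^{a+1}m)$, where $j$ denotes the self-map of $S^{d\tilde{\mathbb R}}$ reflecting each of the $d$ copies of $\tilde{\mathbb R}$, and $\Sigma^{a+1}m$ is the suspension of the mirror reflection, acting only on the $\mathbb{C}P^{2b-1}_{+}$ smash factor. Hence
\[
\psi(c_{j}(\alpha))^{*}=(\Sigma^{a+1}m)^{*}\circ\psi(\alpha)^{*}\circ j^{*},
\]
and since $j^{*}$ is multiplication by $\deg(j)=(-1)^{d}$ and $\psi(\alpha)^{*}(1)=t(\alpha)\cdot g$ for the chosen generator $g$ of $\tilde H^{d}(S^{(a+1)\mathbb R}\wedge \mathbb{C}P^{2b-1}_{+})$, everything reduces to computing the scalar by which $(\Sigma^{a+1}m)^{*}$ multiplies $g$.

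The crux is the action of $m^{*}$ on $H^{*}(\mathbb{C}P^{2b-1})$. Here I would observe that $m$ factors as $m=L\circ c$, where $c\colon[z_{i}]\mapsto[\bar z_{i}]$ is ordinary complex conjugation and $L(w_{1},w_{2},w_{3},w_{4},\dots,w_{2b-1},w_{2b})=(-w_{2},w_{1},-w_{4},w_{3},\dots,-w_{2b},w_{2b-1})$ is a $\mathbb C$-linear automorphism of $\mathbb C^{2b}$. Since $\mathrm{GL}(2b,\mathbb C)$ is path-connected, $L$ is homotopic to the identity, so $L^{*}=\mathrm{id}$ on $H^{*}(\mathbb{C}P^{2b-1})$; and $c$ restricts on a projective line $\mathbb{C}P^{1}\subset\mathbb{C}P^{2b-1}$ to an orientation-reversing reflection, so $c^{*}=-1$ on $H^{2}(\mathbb{C}P^{2b-1})$. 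Therefore $m^{*}$ is $-1$ on $H^{2}$, hence $m^{*}(x^{k})=(-1)^{k}x^{k}$ for the generator $x\in H^{2}(\mathbb{C}P^{2b-1})$; in particular $m^{*}$ is multiplication by $(-1)^{(d-a-1)/2}$ on $H^{d-a-1}(\mathbb{C}P^{2b-1})=\tilde H^{d-a-1}(\mathbb{C}P^{2b-1}_{+})$. As $\Sigma^{a+1}m$ fixes the suspension coordinates, the suspension isomorphism transports this scalar to $(\Sigma^{a+1}m)^{*}$ on $\tilde H^{d}(S^{(a+1)\mathbb R}\wedge \mathbb{C}P^{2b-1}_{+})$, i.e. $(\Sigma^{a+1}m)^{*}(g)=(-1)^{(d-a-1)/2}g$.

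Combining the two signs gives $t(c_{j}(\alpha))=(-1)^{d}(-1)^{(d-a-1)/2}\,t(\alpha)$, and $d+\tfrac{d-a-1}{2}=\tfrac{3d-a-1}{2}$ yields the asserted formula. I do not expect a genuine obstacle here; the only delicate point is the sign bookkeeping — making sure the two contributions (the degree of $j$ on $S^{d\tilde{\mathbb R}}$ and the effect of $m^{*}$ on the middle cohomology of $\mathbb{C}P^{2b-1}$) enter with the correct exponents relative to the orientation conventions built into the definition of $t$, and noting that the potentially awkward low case $d=a+1$ does not occur since $d\ge a+2$.
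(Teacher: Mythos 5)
Your proof is correct and follows essentially the same route as the paper: apply Lemma~\ref{lem: tau conjugation}, note that $j$ contributes the sign $(-1)^{d}$, and determine that $m^{*}$ acts by $-1$ on $H^{2}(\mathbb{C}P^{2b-1})$ so that the ring structure gives the factor $(-1)^{(d-a-1)/2}$ on the relevant degree. The only cosmetic difference is that you compute the action on $H^{2}$ by factoring $m$ as a $\mathbb{C}$-linear map composed with complex conjugation, whereas the paper restricts $m$ to $\mathbb{C}P^{1}$ and identifies it with the antipodal map; both give the same sign and the same conclusion.
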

\begin{proof}
When restricted to $\mathbb{C}P^{1}$, the map $m$ is just the antipodal map so has degree $-1$. Using the ring structure on $H^{*}(\mathbb{C}P^{2b-1})$, we see that $m$ has degree $(-1)^{\frac{d-a-1}{2}}$ on $\tilde{H}^{d}(S^{(a+1)\mathbb{R}}\wedge \mathbb{C}P^{2b-1}_{+})$. Now the result follows from Lemma \ref{lem: tau conjugation}. 
\end{proof}
We end this section by the following result, which is essentially the algebraic version of the vanishing result for Seiberg-Witten invariant of connected sums. 
\begin{lem}\label{lem: SW vanishing}
Given any $\alpha_{1}\in \{S^{a_{1}\mathbb{R}+b_{1}\mathbb{H}},S^{d_{1}\tilde{\mathbb{R}}}\}^{S^{1}}$ and $\alpha_{2}\in \{S^{a_{2}\mathbb{R}+b_{2}\mathbb{H}},S^{d_{2}\tilde{\mathbb{R}}}\}^{S^{1}}$, we have $t(\alpha_{1}\alpha_{2})=0$ if $d_{1}>a_{1}$ and $d_{2}>a_{2}$.
\end{lem}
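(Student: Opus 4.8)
The plan is to reduce the statement to a cohomological computation via the isomorphism $\psi$ and the description of products. The key observation is that when $d_i > a_i$, the target dimension exceeds the fixed-point contribution in a way that forces the relevant cohomology class to lie in a ``higher'' part of $H^*(\mathbb{C}P^{2b-1})$, where the product of two such classes vanishes for degree reasons. Concretely, I would first unwind what $t(\alpha_1\alpha_2) = 0$ means: set $a = a_1 + a_2$, $b = b_1 + b_2$, $d = d_1 + d_2$, so that $\alpha_1\alpha_2 \in \{S^{a\mathbb{R}+b\mathbb{H}}, S^{d\tilde{\mathbb{R}}}\}^{S^1}$, and note $d - a = (d_1 - a_1) + (d_2 - a_2) \geq 2$. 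If $d-a$ is even, or $d - a > 4b - 1$, then $t$ is the zero map by definition and there is nothing to prove, so I would assume $d-a$ is odd with $d - a \leq 4b - 1$.

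The main step is to track the class $\psi(\alpha_1\alpha_2) \in \{S^{(a+1)\mathbb{R}} \wedge \mathbb{C}P^{2b-1}_+, S^{d\tilde{\mathbb{R}}}\}^{\{e\}}$ and show its induced map on $\tilde{H}^d$ is zero. The idea is that $\psi$ is built from the isomorphism $\xi$ coming from the cofiber sequence (\ref{eq: cofiber sequence}) together with the quotient map $q_{S^1}$, and under this identification smash product of stable maps corresponds, on the level of quotient spaces, to the map induced on $\mathbb{C}P^{2b_1-1} \times \mathbb{C}P^{2b_2-1} \hookrightarrow \mathbb{C}P^{2b-1}$ (roughly, the join/Segre-type inclusion of unit spheres $S(b_1\mathbb{H}) * S(b_2\mathbb{H}) = S(b\mathbb{H})$ descends to an inclusion of projective spaces after quotienting by $S^1$). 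So I would show that $\psi(\alpha_1\alpha_2)$ factors, up to suspension, through a map $S^{(a_1+1)\mathbb{R}} \wedge \mathbb{C}P^{2b_1-1}_+ \wedge S^{(a_2+1)\mathbb{R}} \wedge \mathbb{C}P^{2b_2-1}_+ \to S^{d_1\tilde{\mathbb{R}}} \wedge S^{d_2\tilde{\mathbb{R}}}$ that is the smash of $\psi(\alpha_1)$ and $\psi(\alpha_2)$ (after removing an extra $\mathbb{R}$-suspension corresponding to the difference between $S(b_1\mathbb{H})_+ \wedge S(b_2\mathbb{H})_+$ and $S(b\mathbb{H})_+$). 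On cohomology, $(\psi(\alpha_i))^*$ sends the generator of $\tilde{H}^{d_i}$ into $\tilde{H}^{d_i}(S^{(a_i+1)\mathbb{R}} \wedge \mathbb{C}P^{2b_i-1}_+)$, which by the computation in Corollary \ref{cor: conjugate char homo} is supported in the class $x^{(d_i - a_i - 1)/2}$ with $x \in H^2(\mathbb{C}P^{2b_i-1})$ the hyperplane class; since $d_i > a_i$, we have $(d_i - a_i - 1)/2 \geq 0$, but more importantly the \emph{total} class on the product pulls back from $H^*(\mathbb{C}P^{2b_1-1}) \otimes H^*(\mathbb{C}P^{2b_2-1})$ to a class on $\mathbb{C}P^{2b-1}$ along the inclusion, and $H^*(\mathbb{C}P^{2b-1})$ is generated by a single $x$ truncated at $x^{2b-1}$, so the two positive-degree factors multiply inside the truncated ring.

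The punchline: on $\mathbb{C}P^{2b-1}$ the product $x^{k_1} \cdot x^{k_2} = x^{k_1+k_2}$, and $t(\alpha_1\alpha_2)$ is computed by pairing $x^{(d-a-1)/2}$ against the fundamental class of a sub-$\mathbb{C}P^{(d-a-1)/2}$; the issue is whether this survives. Here is where the hypothesis $d_i > a_i$ really bites: it forces each $k_i = (d_i - a_i - 1)/2 \geq 0$ — wait, that alone is not enough, so I expect the actual mechanism is that one needs \emph{strictly} $d_i - a_i \geq 1$ on \emph{both} sides, giving $(d-a-1)/2 = k_1 + k_2 + $ (a correction term from the join adjusting dimension) $\geq 2b_1 + 2b_2 - 1 \geq 2b - 1$ precisely in the boundary case, or else lands in a degree where the truncation $x^{2b-1} = 0$ of $H^*(\mathbb{C}P^{2b-1})$ kills it. So the final step is a careful bookkeeping of dimensions showing $k_1 + k_2 \geq 2b - 1$, hence $x^{k_1+k_2} = 0$ in $H^*(\mathbb{C}P^{2b-1})$, hence $t(\alpha_1\alpha_2) = 0$. \textbf{The main obstacle} I anticipate is precisely getting this dimension count exactly right — in particular correctly identifying how the cofiber-sequence isomorphism $\xi$ interacts with products, and tracking the extra $\mathbb{R}$-suspension so that the degrees line up and the truncation of the cohomology ring of the finite projective space is genuinely triggered; the conceptual content (``the family Seiberg--Witten invariant of a connected sum vanishes, algebraically because the relevant characteristic class is a product of two positive-degree classes in a truncated polynomial ring'') is clear, but the indices must be handled with care.
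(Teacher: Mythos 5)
Your setup (reduce to the case $d-a$ odd, pass through $\psi$, and decompose $\psi(\alpha_1\alpha_2)$ as a smash of classes coming from the two factors) matches the paper's strategy, but your punchline is the wrong mechanism, and you in fact sensed this (``wait, that alone is not enough''). The vanishing is \emph{not} caused by the truncation $x^{2b-1}=0$ in $H^*(\mathbb{C}P^{2b-1})$ after a dimension count $k_1+k_2\geq 2b-1$: no such inequality holds in general (e.g.\ $d_1-a_1=1$, $d_2-a_2=2$, $b_1=b_2=1$ gives a power of $x$ far below the truncation degree), and indeed your $k_i=(d_i-a_i-1)/2$ cannot both be integers in the relevant case, since $d-a$ odd forces exactly one of $d_1-a_1$, $d_2-a_2$ to be even. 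That parity split is the actual engine of the proof. Assume WLOG $d_1-a_1$ odd and $d_2-a_2$ even. The hypothesis $d_i>a_i$ is used only to make $\{S^{a_i\mathbb{R}},S^{d_i\tilde{\mathbb{R}}}\}^{S^1}=0$ (equivariant Hopf theorem), so that \emph{each} $\alpha_i$ lifts through the cofiber sequence (\ref{eq: cofiber sequence}) to some $\beta_i\in\{S^{(a_i+1)\mathbb{R}}\wedge S(b_i\mathbb{H})_+,S^{d_i\tilde{\mathbb{R}}}\}^{S^1}$; it plays no role in any degree estimate. One then checks, via the commutative diagram comparing $S^{(b_1+b_2)\mathbb{H}}/S^0$ with $(S^{b_1\mathbb{H}}/S^0)\wedge(S^{b_2\mathbb{H}}/S^0)$, that $\psi(\alpha_1\alpha_2)=q_{S^1}(\beta_1\beta_2)\circ q_{S^1}(\gamma)$, where $q_{S^1}(\beta_1\beta_2)$ factors through $q_{S^1}(\beta_1)\wedge q_{S^1}(\beta_2)$. (Note the comparison map here is a collapse/quotient map, not the Segre-type inclusion of projective spaces you describe.)

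The vanishing then comes from a K\"unneth/parity observation about the \emph{even} factor: the reduced cohomology of $S^{(a_2+1)\mathbb{R}}\wedge(S(b_2\mathbb{H})_+/S^1)=S^{(a_2+1)\mathbb{R}}\wedge\mathbb{C}P^{2b_2-1}_+$ is concentrated in degrees congruent to $a_2+1$ mod $2$, so $\tilde{H}^{d_2}$ of it vanishes because $d_2\equiv a_2$ mod $2$. Hence $q_{S^1}(\beta_2)$ is zero on $\tilde{H}^{d_2}$, so the smash $q_{S^1}(\beta_1)\wedge q_{S^1}(\beta_2)$ is zero on $\tilde{H}^{d_1+d_2}$, and therefore so is $\psi(\alpha_1\alpha_2)$, giving $t(\alpha_1\alpha_2)=0$. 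So the conceptual summary is not ``a product of two positive-degree classes dies by truncation'' but ``one of the two factors already contributes the zero class for parity reasons''; your proposal as written would not close, because the inequality it ultimately relies on is false.
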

\begin{proof}
The product $\alpha_{1}\alpha_{2}$ belongs to the group $$\{S^{(a_{1}+a_{2})\mathbb{R}+(b_{1}+b_{2})\mathbb{H}},S^{(d_{1}+d_{2})\tilde{\mathbb{R}}}\}^{S^{1}}.$$ Therefore, $t(\alpha_{1}\alpha_{2})$ can possibly be nonzero only if $d_{1}+d_{2}-a_{1}-a_{2}$ is odd. Without loss of generality, we may assume $d_{1}-a_{1}$ is odd and $d_{2}-a_{2}$ is even. 
Since $d_{i}>a_{i}$ for $i=1,2$, the group $\{S^{a_{i}\mathbb{R}},S^{d_{i}\tilde{\mathbb{R}}}\}^{S^{1}}$ is vanishing. By the long exact sequence (\ref{eq: long exact 1}), we see that $\alpha_{i}$ equals the image of some element $$\beta_{i}\in \{S^{(a_{i}+1)\mathbb{R}}\wedge S(b_{i}\mathbb{H})_{+}  ,S^{d_{i}\tilde{\mathbb{R}}}\}^{S^{1}}=\{S^{a_{i}\mathbb{R}}\wedge (S^{b_{i}\mathbb{H}}/S^{0})  ,S^{d_{i}\tilde{\mathbb{R}}} \}^{S^{1}}.$$
Here we identify $S^{b_{i}\mathbb{H}}/S^{0}$ with $S^{\mathbb{R}}\wedge S(b_{i}\mathbb{H})_{+}$ by treating $S^{\mathbb{R}}$ as the one-point compactification of $(0,+\infty)$ and sending $v\in \mathbb{H}^{b_{i}}\setminus \{0\}$ to $(|v|,\frac{v}{|v|})\in (0,\infty)\times S(b_{i}\mathbb{H})$. 

Next, we consider the following commutative diagram
\begin{equation}\label{diagram: smash and quotient}
\xymatrix{
S^{(b_{1}+b_{2})\mathbb{H}}\ar[r]^-{q}\ar[d]^-{\cong} &S^{(b_{1}+b_{2})\mathbb{H}}/S^{0}\ar[d]^{\gamma}\\
S^{b_{1}\mathbb{H}}\wedge S^{b_{2}\mathbb{H}}\ar[r]^-{q_{1}\wedge q_{2}}&  (S^{b_{1}\mathbb{H}}/S^{0})\wedge (S^{b_{2}\mathbb{H}}/S^{0}) \\
}
\end{equation}
where $q,q_{1},q_{2}$ and  
$$
\gamma: S^{(b_{1}+b_{2})\mathbb{H}}/S^{0}\rightarrow S^{(b_{1}+b_{2})\mathbb{H}}/((S^{0}\wedge S^{b_{2}\mathbb{H}})\cup (S^{b_{1}\mathbb{H}}\wedge S^{0}))= (S^{b_{1}\mathbb{H}}/S^{0})\wedge (S^{b_{2}\mathbb{H}}/S^{0})
$$
are all quotient maps. From diagram (\ref{diagram: smash and quotient}), we see that 
$$
\alpha_{1}\wedge \alpha_{2}=(\beta_{1}\wedge  \beta_{2})\circ (q_{1} \wedge q_{2}) = (\beta_{1}\wedge  \beta_{2})\circ \gamma\circ q.
$$
Therefore, we have $\xi(\alpha_{1}\alpha_{2})=(\beta_{1}\beta_{2})\circ \gamma$.

Moreover, checking the explicit construction of the map $q_{S^{1}}$ given in Fact \ref{Fact: quotient map}, we see that $q_{S^{1}}$ is also natural under the smash product and composition. Therefore, we have $$\psi(\alpha_{1}\alpha_{2})=q_{S^{1}}(\xi(\alpha_{1}\alpha_{2}))=q_{S^{1}}(\beta_{1}\beta_{2})\circ q_{S^{1}}(\gamma)$$
and $q_{S^{1}}(\beta_{1}\beta_{2})$ equals the composition
\begin{equation*}
\begin{split}
S^{(a_{1}+a_{2}+2)\mathbb{R}}\wedge ((S(b_{1}\mathbb{H})_{+}\wedge S(b_{2}\mathbb{H})_{+})/S^{1})\rightarrow \\ (S^{(a_{1}+1)\mathbb{R}}\wedge (S(b_{1}\mathbb{H})_{+})/S^{1}))\wedge (S^{(a_{2}+1)\mathbb{R}}\wedge (S(b_{2}\mathbb{H})_{+})/S^{1}))
\xrightarrow{q_{S^{1}}(\beta_1)\wedge q_{S^{1}}(\beta_{2})} S^{d_{1}\tilde{\mathbb{R}}}\wedge S^{d_{2}\tilde{\mathbb{R}}}
\end{split}
\end{equation*}
Because $d_{2}-a_{2}$ is even, the cohomology $\tilde{H}^{d_{2}}(S^{(a_{2}+1)\mathbb{R}}\wedge (S(b_{2}\mathbb{H})_{+})/S^{1}))=0$. So $q_{S^{1}}(\beta_{2})$ induces trivial map on the reduced cohomology. This implies that $\psi(\alpha_{2}\alpha_{2})$ induces trivial map on $\tilde{H}^{d_{1}+d_{2}}(*)$. Hence we have $t(\alpha_{1}\alpha_{2})=0$.
\end{proof}
\subsection{The Pin(2)-equivariant Bauer-Furuta Invariant for Spin Families}
In this section, we briefly summarize the definition and some important properties of the Bauer-Furuta invariant for spin families. This invariant was originally defined in \cite{BauerFuruta1} for a single 4-manifold. The family version was first defined in \cite{Xu} and \cite{Szymik} and later extensively studied in \cite{baraglia2019bauer,baraglia2019constraints}. Because we want to construct the Bauer-Furuta invariant as a concrete element in the $G$-equivariant stable homotopy group of spheres, some care must be taken in the construction. 
\subsubsection{Spin structures on circle family of 4-manifolds}
Let $N$ be a smooth fiber bundle whose fiber is a closed spin $4$-manifold $M$ and whose base is another closed manifold $B$. For simplicity, we will make the following assumption throughout the paper:
\begin{assum}\label{assum: bundle}The bundle $N$ satisfies the following property: 
\begin{enumerate}
    \item $M$ is simply connected;
    \item The signature $\sigma(M)\leq 0$;
    \item Let $M_{x}$ be the fiber over the point $x\in B$. Then the action of $\pi_{1}(B,x)$ on $H^{2}(M_x;\mathbb{Z})$ (given by the holonomy of the bundle) is trivial.
\end{enumerate}
\end{assum}
We equip $N$ with a Riemannian metric and let $\Fr(N)$ be the frame bundle of the \emph{vertical} tangent bundle of $N$. This is an $\SO(4)$-bundle over $N$. 

\begin{defi}
A spin structure $\mathfrak{s}$ on $N$ is a double covering map $\pi:P\rightarrow\Fr(N)$ that restricts to a nontrivial covering map $\Spin(4)\rightarrow \SO(4)$ on each fiber. Two spin structures $(\pi,P)$ and $(\pi',P')$ are called isomorphic if there exists homeomorphism $P\rightarrow P'$ that covers the \emph{identity map on $\Fr(N)$.}  
\end{defi}
\begin{defi}
The pair $(N,\mathfrak{s})$ is called a spin family. Two spin families $(N_{1},\mathfrak{s}_{1})$ and  $(N_{2},\mathfrak{s}_{2})$ over the same base $B$ are called ``isomorphic'' if there exists a bundle isomorphism $f:N_{1}\rightarrow N_{2}$ such that $f^{*}(\mathfrak{s}_{2})$ is isomorphic to $\mathfrak{s}_{1}$.
\end{defi}

We are mainly interested in the case that $B$ is a circle or a point. By our Assumption \ref{assum: bundle}, $N$ has a unique spin structure when $B$ is a point and has two spin structures when $B$ is a circle. We give an explicit description of these two spin structures as follows: Let $\pi_{M}:P_{M}\rightarrow \rm{Fr}(M)$ be the covering map given by the unique spin structure on $M$. Then the bundle $N$ is obtained by gluing the two boundary components of $M\times [0,1]$ via a diffeomorphism $f:M\rightarrow M$. The diffeomorphism induces a map $f_{*}: \rm{Fr}(M)\rightarrow \rm{Fr}(M)$, which has two lifts $f^{\pm}_{*}:P_{M}\rightarrow P_{M}$. These lifts differ from each other by the deck transformation $\tau: P_{M}\rightarrow P_{M}$. We use $f_{*}^{\pm}$ to glue the two boundary components of $P_{M}\times I$ and form two spin structures on $N$. 

\begin{defi}\label{defi: product family and twisted family}
When $N=M\times S^{1}$, the maps $f^{\pm}_{*}$ are just the identity map and the deck transformation $\tau$. We call the associated spin structures over $N$ as the \emph{product spin structure} and the \emph{twisted spin structure} respectively. Let $\mathfrak{s}$ be the unique spin structure on $M$. Then we use $\tilde{\mathfrak{s}}$ to denote the former and use $\tilde{\mathfrak{s}}^{\tau}$ to denote the latter. 
\end{defi}
For general $M$, the product family and the twisted family are not isomorphic. For example, Kronheimer-Mrowka \cite{KronheimerMrowkaDehnTwist} established the following example: 
\begin{ex}
The product family $(K3\times S^{1},\tilde{\mathfrak{s}})$ and the twisted family $(K3\times S^{1},\tilde{\mathfrak{s}}^{\tau})$ are not isomorphic, as can be proved by the nonequivariant Bauer-Furuta invariant.  
\end{ex}
However, for the special case of $S^{2}\times S^{2}$, these two families are indeed isomorphic:
\begin{lem}\label{lem: twisted family for S2 cross S2}
The product family $((S^{2}\times S^{2})\times S^{1},\tilde{\mathfrak{s}})$ and the twisted family  $((S^{2}\times S^{2})\times S^{1},\tilde{\mathfrak{s}}^{\tau})$ are isomorphic.
\end{lem}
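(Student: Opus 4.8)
The plan is to use the observation, noted above as a consequence of Assumption~\ref{assum: bundle}, that the bundle $N:=(S^{2}\times S^{2})\times S^{1}\to S^{1}$ carries \emph{exactly two} spin structures, $\tilde{\mathfrak{s}}$ and $\tilde{\mathfrak{s}}^{\tau}$. Hence it suffices to exhibit a single fibrewise bundle automorphism $\Phi\colon N\to N$ (covering the identity of $S^{1}$) for which the pulled-back spin structure $\Phi^{*}\tilde{\mathfrak{s}}$ is \textbf{not} isomorphic to $\tilde{\mathfrak{s}}$; the dichotomy then forces $\Phi^{*}\tilde{\mathfrak{s}}\cong\tilde{\mathfrak{s}}^{\tau}$, so that $\Phi$ (or $\Phi^{-1}$) is precisely a bundle isomorphism witnessing that the two spin families are isomorphic in the sense of the definition above. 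Equivalently, I must produce a loop of diffeomorphisms of $S^{2}\times S^{2}$, based at the identity, whose mapping torus carries the twisted rather than the product spin structure. This is where $S^{2}\times S^{2}$ differs from $K3$ (for which the product and twisted families are \emph{not} isomorphic): $\mathrm{Diff}(S^{2}\times S^{2})$ has a loop that sees the spin obstruction.

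For the loop I would take the simplest one available. Fix an axis in $\mathbb{R}^{3}$ and let $R_{\theta}\in\SO(3)\subset\mathrm{Diff}(S^{2})$ be rotation by $\theta$ about it, so that $\{R_{\theta}\}_{\theta\in[0,2\pi]}$ is a loop based at the identity representing the generator of $\pi_{1}(\SO(3))=\mathbb{Z}/2$. Define
\[
\Phi\colon N\to N,\qquad \Phi(x,y,\theta)=(R_{\theta}x,\,y,\,\theta),
\]
which is a well-defined fibrewise diffeomorphism (its inverse is $(x,y,\theta)\mapsto(R_{-\theta}x,y,\theta)$), preserves the fibrewise orientation, and hence induces an automorphism $\Phi_{*}$ of the vertical frame bundle $\Fr(N)$ covering $\Phi$.

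Next I would recall that $\tilde{\mathfrak{s}}$ is a double cover $\pi\colon P\to\Fr(N)$, that $\Phi^{*}\tilde{\mathfrak{s}}$ is the pulled-back double cover $\Phi_{*}^{*}P\to\Fr(N)$, and that $\Phi_{*}^{*}P\cong P$ as double covers of $\Fr(N)$ if and only if $\Phi_{*}$ lifts to an automorphism of $P$, i.e.\ if and only if $(\Phi_{*})^{*}[P]=[P]$ in $H^{1}(\Fr(N);\mathbb{Z}/2)$, where $[P]$ is the class classifying the cover. To see that this equality fails I would restrict everything to the circle $C=\{x_{0}\}\times\{y_{0}\}\times S^{1}\subset N$, where $x_{0}$ is a pole fixed by every $R_{\theta}$. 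On $C$ the map $\Phi$ is the identity, but $\Phi_{*}$ is not: trivialising $\Fr(N)|_{C}\cong\SO(4)\times S^{1}$ by a constant fibrewise frame at $(x_{0},y_{0})$, one has $\Phi_{*}|_{C}(g,\theta)=(\rho(\theta)\,g,\,\theta)$, where $\rho(\theta)\in\SO(4)$ is rotation by $\theta$ in the $2$-plane $T_{x_{0}}S^{2}\subset T_{(x_{0},y_{0})}(S^{2}\times S^{2})$. Since $\{\rho(\theta)\}_{\theta\in[0,2\pi]}$ is the nontrivial loop in $\pi_{1}(\SO(4))=\mathbb{Z}/2$ and $P|_{C}$ is the product spin structure $\Spin(4)\times S^{1}$, a direct computation on $\pi_{1}(\SO(4)\times S^{1})=\mathbb{Z}/2\times\mathbb{Z}$ gives $(\Phi_{*}|_{C})^{*}[P|_{C}]=[P|_{C}]+b$, where $b$ generates $H^{1}(S^{1};\mathbb{Z}/2)$. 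By naturality of restriction, $(\Phi_{*})^{*}[P]$ and $[P]$ already differ after restriction to $\Fr(N)|_{C}$, hence differ in $H^{1}(\Fr(N);\mathbb{Z}/2)$; therefore $\Phi^{*}\tilde{\mathfrak{s}}\ncong\tilde{\mathfrak{s}}$, and we conclude as in the first paragraph.

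I do not expect a genuine obstacle. The conceptual content is just the elementary fact that the $2\pi$-rotation of one $S^{2}$ factor, viewed as a loop of diffeomorphisms, induces on the vertical frame bundle over the fixed circle $C$ exactly the nontrivial loop in $\SO(4)$ — the one that fails to lift to $\Spin(4)$. The only care needed is bookkeeping: translating freely between a spin structure on the family $N$ and the $\mathbb{Z}/2$-cohomology class of its double cover of $\Fr(N)$, together with the routine naturality that lets the computation over $C$ obstruct the global lift.
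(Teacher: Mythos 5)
Your proof is correct and takes essentially the same approach as the paper: rotate one $S^{2}$ factor through a full turn as the circle parameter varies, and use the resulting essential loop in $\SO(4)$ on the vertical tangent space over the fixed-point circle to see that pulling back the product spin structure yields the twisted one. You merely phrase the conclusion via the two-spin-structure dichotomy and spell out the $H^{1}(\,\cdot\,;\mathbb{Z}/2)$ computation that the paper leaves as ``one can verify,'' which is fine.
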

\begin{proof}
There is an $S^{1}$-action on $S^{2}$ with two fixed points $\{0,\infty\}$. We use $\xi:S^{1}\times S^{2}\rightarrow S^{2}$ to denote this action. As $x$ varies from $0$ to $2\pi$, the induced map
$$
(\text{id}_{S^{2}}\times \xi(x,\cdot))_{*}: T_{(0,0)}(S^{2}\times S^{2})\rightarrow T_{(0,0)}(S^{2}\times S^{2})
$$
gives an essential loop in $SO(4)$. Using this fact, one can verify that the the bundle automorphism $$f:(S^{2}\times S^{2})\times S^{1}\rightarrow (S^{2}\times S^{2})\times S^{1}$$
defined by $f(y_{1},y_{2},x)=(y_{1},\xi(x,y_{2}),x)$ satisfies $f^*(\tilde{\mathfrak{s}})=\tilde{\mathfrak{s}}^{\tau}$.
\end{proof}

\subsubsection{Definition of the Bauer-Furuta invariant}
As in the case of a single 4-manifold, a spin structure  $\mathfrak{s}$ gives rise to two quaternion bundles $S^{\pm}$ over $N$. Denote by $S^{\pm}_{x}$ the restriction of $S^\pm$ to the fiber $M_{x}$. Then the spin Dirac operator  
$$
D(M_{x}):\Gamma(S^{+}_x)\rightarrow \Gamma(S^{-}_{x})
$$
is a quaternionic linear operator. We form the operator $D$ over $N$ by putting $D(M_{x})$ together.

Now we consider four Hilbert bundles $\mathcal{V}^{+}, \mathcal{V}^{-}, \mathcal{U}^{+}$ and $ \mathcal{U}^{-}$  over $B$.
The fibers of $\mathcal{V}^{\pm}$ are suitable Sobolev completions of $\Gamma(S^{\pm}_{x})$. And the fibers of $\mathcal{U}^{+}$ and $ \mathcal{U}^{-}$ are completions of $\Omega^{1}(M_{x})$ and $\Omega^{2}_{+}(M_{x})\oplus \Omega^{0}(M_{x})/\mathbb{R}$ respectively. We let $G=\Pin$ acts on  $\mathcal{V}^{\pm}$ by the left multiplication in the quaternion, and we let $G$ acts on  $ \mathcal{U}^{\pm}$ by setting the $S^{1}$-action to be trivial and setting the $j$-action as multiplication by $-1$.

The family Seiberg-Witten equations give a fiber preserving, $G$-equivariant map 
$$
\mathcal{SW}: \mathcal{U}^{+}\oplus \mathcal{V}^{+}\rightarrow \mathcal{U}^{-}\oplus \mathcal{V}^{-}
$$
This Seiberg-Witten map can be written as $l+c$, where $l$ is the fiberwise Fredholm operator $$l:=D\oplus (d^{+},d^{*})$$
and $c$ is a certain 0-th order operator. Furthermore, by the boundededness property of the Seiberg-Witten equations \cite[Proposition 3.1]{BauerFuruta1}, $\mathcal{SW}$ extends to a map 
$$
\mathcal{SW}^{+}: (\mathcal{U}^{+}\oplus \mathcal{V}^{+})_{\infty}\rightarrow (\mathcal{U}^{-}\oplus \mathcal{V}^{-})_{\infty}
$$
between the one-point completions $$(\mathcal{U}^{\pm}\oplus \mathcal{V}^{\pm})_{\infty}:=(\mathcal{U}^{\pm}\oplus \mathcal{V}^{\pm})\cup \{\infty\}.$$To apply the finite dimensional approximation technique on the map $
\mathcal{SW}$, we carefully choose finite dimensional subspaces of $\mathcal{V}^{\pm}$ and $\mathcal{U}^{\pm}$ as follows: First, we apply Kuiper's theorem \cite{Kuiper} to get canonical trivialization of the bundles
\begin{equation}\label{eq trivilization}
    \mathcal{V}^{-}\cong B\times L^{2}(\mathbb{H}^{\infty})\text{ and }\mathcal{U}^{+}\cong B\times L^{2}(\tilde{\mathbb{R}}^{\infty}).
\end{equation}
Here $L^{2}(*)$ denotes the completion with respect to the $L^{2}$-norm. Choose $m,n\gg 0$ and let $U^{+}\subset \mathcal{U}^{+}$ and $V^{-}\subset \mathcal{V}^{-}$ be the subbundles corresponding to the bundle $B\times \mathbb{H}^{n}$ and $B\times \tilde{\mathbb{R}}^{m}$ under the isomorphism (\ref{eq trivilization}). Let $H^+_{2}$ be the subbundle of $\mathcal{U}^{-}$ consisting of all self-dual harmonic 2 forms on $M_{x}$. We set
$$U^{-}:=H^{+}_{2}\oplus ((d^{+},d^{*})U^{+})\subset \mathcal{U}^{-}.$$
(Note that $(d^{+},d^{*})$ is injective by our assumption $b_{1}(M)=0$.) We choose $m$ large enough so that $V^{-}$ is fiberwise transverse to $D$ and we set $V^{+}:=D^{-1}(V^{-})\subset \mathcal{V}^{+}.$

Set $W^{+}:=U^{+}\oplus V^{+}$ and $W^{-}:=U^{-}\oplus V^{-}$. As explained \cite{BauerFuruta1}, when $m,n$ are large enough, one has 
$$
\mathcal{SW}^{+}(W^{+}_{\infty})\cap S(W^{-,\perp})=\emptyset,
$$
where $S(W^{-,\perp})$ denotes the unit sphere in the orthogonal complement of $W^{-}$ in $\mathcal{U}^{-}\oplus \mathcal{V}^{-}$. Therefore, by composing $\mathcal{SW}^{+}$ with a specific $G$-equivariant deformation retraction 
$$\rho: (\mathcal{U}^{-}\oplus \mathcal{V}^{-})_{\infty}\setminus S(W^{-,\perp})\rightarrow W^{-}_{\infty},$$ one obtains a $G$-equivariant map
$$
sw: W^{+}_{\infty}\rightarrow W^{-}_{\infty} 
$$Restriction of (\ref{eq trivilization}) gives canonical trivializations of the bundles  $V^{-}$ and $U^{+}$. By Assumption \ref{assum: bundle}, $\pi_{1}(B)$ acts trivially on $H^{2}(M_{x})$. Therefore, as explained in \cite{KronheimerMrowkaDehnTwist}, a homology orientation of $M$ determines a canonical trivialization of $H_{2}^{+}$. At this point, we have obtained canonical trivializations of $U^{\pm}$ and $V^{-}$. Using these trivializations, we get the following composition map 
\begin{equation}\label{eq: approximated sw}
(S^{m\tilde{\mathbb{R}}}\wedge V_{\infty}^{+})\cong  W^{+}_{\infty}\xrightarrow{sw} W^{-}_{\infty}\cong (S^{(m+b^{+}(M))\tilde{\mathbb{R}}+n\mathbb{H}}\wedge B_{+}  ) \xrightarrow{pj }S^{(m+b^{+}(M))\tilde{\mathbb{R}}+n\mathbb{H}},
\end{equation}
where $pj$ denotes projection to the first factor. 

From now on, we specialize to the case that $B$ is a circle or point. Note that $V^{+}$ is a quaternionic bundle of dimension $(n-\frac{\sigma(M)}{16})$ and the group
$
Sp(n-\frac{\sigma(M)}{16} )
$ has trivial $\pi_{i}$ for $i\leq 2$. So the bundle $V^{+}$ has a trivialization (canonical up to homotopy). This trivialization allows us to fix an identification $$V^{+}_{\infty}\cong (S^{(n-\frac{\sigma(M)}{16})\mathbb{H} }\wedge B_{+}) $$  
and rewrite the map $(\ref{eq: approximated sw})$ as a $G$-map
\begin{equation}\label{eq: approximated SW map trivilized}
\widetilde{sw}:S^{m\tilde{\mathbb{R}}+(n-\frac{\sigma(M)}{16})\mathbb{H} }\wedge B_{+}\rightarrow S^{(m+b^{+}(M))\tilde{\mathbb{R}}+n\mathbb{H}},
\end{equation}
which represents an element in $[\widetilde{sw}]\in \{S^{-\frac{\sigma(M)}{16}\mathbb{H}}\wedge B_{+},S^{b^{+}(M)\tilde{\mathbb{R}}}\}^{G}$. By checking the  concrete construction of $\widetilde{sw}$ in \cite{BauerFuruta1}, one establishes the following fact: 

\begin{fact}\label{fact: restriction to S1 fixed points}
Consider the map $S^{m\tilde{\mathbb{R}}}\wedge B_{+}\rightarrow S^{(m+b^{+}(M))\tilde{\mathbb{R}}}$ given by restricting $\widetilde{sw}$ to the $S^{1}$-fixed point sets. This map can be explicitly described as the composition
$$
S^{m\tilde{\mathbb{R}}}\wedge B_{+}\xrightarrow{\text{projection}}S^{m\tilde{\mathbb{R}}}\xrightarrow{\text{inclusion} } S^{(m+b^{+}(M))\tilde{\mathbb{R}}}.
$$
\end{fact}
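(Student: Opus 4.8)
\emph{Proof proposal.} The plan is to trace the construction of $\widetilde{sw}$ through the finite-dimensional approximation while restricting every step to $S^{1}$-fixed point sets, and to observe that all of the nonlinearity in the Seiberg--Witten map is carried by the spinor components, which vanish on the $S^{1}$-fixed locus. First I would identify the fixed point sets of the relevant bundles. Since $S^{1}\subset \mathbb{H}$ acts on $\mathcal{V}^{\pm}$ by left quaternion multiplication, the only $S^{1}$-fixed vector in each fiber is the origin, so $(\mathcal{V}^{\pm})^{S^{1}}=0$ and likewise $(V^{\pm})^{S^{1}}=0$; on the other hand $S^{1}$ acts trivially on $\mathcal{U}^{\pm}$ and hence on $U^{\pm}$. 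Therefore $(W^{+}_{\infty})^{S^{1}}=U^{+}_{\infty}$ and $(W^{-}_{\infty})^{S^{1}}=U^{-}_{\infty}$; the fixed locus of the source of $\widetilde{sw}$ is $S^{m\tilde{\mathbb{R}}}\wedge B_{+}$ (the $\mathbb{H}$-summand contributes only the basepoint) and the fixed locus of the target is $S^{(m+b^{+}(M))\tilde{\mathbb{R}}}$, matching the spaces in the statement.

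Next I would restrict the Seiberg--Witten map itself. Writing $\mathcal{SW}=l+c$ with $l=D\oplus(d^{+},d^{*})$, the zeroth-order term $c$ is quadratic in the spinor, so $c$ vanishes identically on the reducible locus $\mathcal{U}^{+}\oplus\{0\}=(\mathcal{U}^{+}\oplus\mathcal{V}^{+})^{S^{1}}$. Hence on the $S^{1}$-fixed set $\mathcal{SW}$ restricts to the fiberwise linear injection $(d^{+},d^{*}):\mathcal{U}^{+}\to\mathcal{U}^{-}$ (injectivity using $b_{1}(M)=0$). Because the deformation retraction $\rho$ is $G$-equivariant it preserves fixed point sets, and since $U^{-}=H_{2}^{+}\oplus((d^{+},d^{*})U^{+})$ was chosen so that it already contains $(d^{+},d^{*})U^{+}$, the composite $sw$ restricted to $U^{+}_{\infty}$ is the inclusion $U^{+}_{\infty}\hookrightarrow U^{-}_{\infty}$, $\xi\mapsto(0,(d^{+},d^{*})\xi)$, up to the canonical linear homotopy coming from $\rho$.

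Finally I would feed in the trivializations used to write (\ref{eq: approximated sw}) and (\ref{eq: approximated SW map trivilized}): Kuiper's theorem gives $U^{+}\cong B\times\tilde{\mathbb{R}}^{m}$, the homology orientation gives $H_{2}^{+}\cong B\times\tilde{\mathbb{R}}^{b^{+}(M)}$, the isomorphism $(d^{+},d^{*})$ identifies $(d^{+},d^{*})U^{+}$ with $B\times\tilde{\mathbb{R}}^{m}$, and the $Sp$-contractibility trivialization handles $V^{+}$. Under these identifications the inclusion above becomes the product over $B$ of the standard linear inclusion $\tilde{\mathbb{R}}^{m}\hookrightarrow\tilde{\mathbb{R}}^{b^{+}(M)}\oplus\tilde{\mathbb{R}}^{m}$, $v\mapsto(0,v)$; composing with the projection $pj$ and collapsing the $B$-factor and the trivial $\mathbb{H}$-summands then yields exactly the claimed composition $S^{m\tilde{\mathbb{R}}}\wedge B_{+}\xrightarrow{\mathrm{projection}}S^{m\tilde{\mathbb{R}}}\xrightarrow{\mathrm{inclusion}}S^{(m+b^{+}(M))\tilde{\mathbb{R}}}$.

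The geometric content — that the linearization of the Seiberg--Witten map at a reducible solution is $D\oplus(d^{+},d^{*})$, and that the nonlinear term dies when the spinor is zero — is immediate; the real work is bookkeeping. The main obstacle I anticipate is checking that the $G$-equivariant retraction $\rho$ of \cite{BauerFuruta1} restricts on the fixed locus to the evident linear retraction onto $U^{-}_{\infty}$, so that no spurious homotopy is introduced, and that the several ``canonical up to homotopy'' identifications appearing in the definition of $\widetilde{sw}$ are precisely the ones compatible with this fixed-point picture. Once these compatibilities are spelled out, the Fact follows.
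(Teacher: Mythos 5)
Your proposal is correct and is essentially the argument the paper leaves to the reader when it says the Fact follows ``by checking the concrete construction of $\widetilde{sw}$'' in \cite{BauerFuruta1}: the spinor bundles $V^{\pm}$ have no nonzero $S^{1}$-fixed vectors, the quadratic term of $\mathcal{SW}$ vanishes on the reducible locus so the fixed-point restriction is the linear map $(d^{+},d^{*}):U^{+}\rightarrow U^{-}=H^{+}_{2}\oplus (d^{+},d^{*})U^{+}$ (on which the retraction $\rho$ acts as the identity, since the image already lies in $W^{-}_{\infty}$), and the chosen trivializations of $U^{\pm}$ convert this into the stated projection followed by inclusion.
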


\begin{defi} 
Suppose $B$ is a point. Then $M=N$ and we have $S^{-\frac{\sigma(M)}{16}\mathbb{H}}\wedge B_{+}=S^{-\frac{\sigma(M)}{16}\mathbb{H}}$. In this case, we define the $G$-equivariant Bauer-Furuta invariant as
$$\operatorname{BF}^{G}(M,\mathfrak{s}):=[\widetilde{sw}]\in \{S^{-\frac{\sigma(M)}{16}\mathbb{H}},S^{b^{+}(M)\tilde{\mathbb{R}}}\}^{G}. $$
 We will neglect the spin structure $\mathfrak{s}$ in our notation when it is obvious from the context.
\end{defi}
\begin{ex}\label{ex: S4}
$\BF^{G}(S^{4})$ is an element in $\{S^{0},S^{0}\}^{G}$ represented by a $G$-map from the $S^{m\tilde{\mathbb{R}}+n\mathbb{H}}$ to itself. By the equivariant Hopf theorem \cite[Chapter II.4 ]{tom2011transformation}, such stable homotopy class is determined by its restriction to the $S^{1}$-fixed points. Hence by Fact \ref{fact: restriction to S1 fixed points}, we see that $\BF^{G}(S^{4})=1$. 
\end{ex}
\begin{ex}\label{ex: S2 cross S2}
$\BF^{G}(S^{2}\times S^{2})\in \{S^{0},S^{\tilde{\mathbb{R}}}\}^{G}$ is represented by a $G$-map from $S^{m\tilde{\mathbb{R}}+n\mathbb{H}}$ to $S^{(m+1)\tilde{\mathbb{R}}+n\mathbb{H}}$. Such map are also determined by its restriction on the $S^{1}$-fixed points. By Fact \ref{fact: restriction to S1 fixed points} again, we see that $\BF^{G}(S^{2}\times S^{2})=e_{\tilde{\mathbb{R}}}$. Here $e_{\tilde{\mathbb{R}}}$ is the Euler class defined in (\ref{eq: euler class})
\end{ex}
When $B$ is a circle, we identify it with the unit sphere $S(2\mathbb{R})$ in $S^{2\mathbb{R}}$. 
Consider the cofiber sequence
\begin{equation}\label{eq: cofiber sq 3}
S(2\mathbb{R})\cup \{\infty\}\rightarrow S^{0}\rightarrow  S^{2\mathbb{R}}\xrightarrow{p} S^{\mathbb{R}}\wedge (S(2\mathbb{R})\cup \{\infty\}).
\end{equation}
The map $p$, which is just the Pontryagin-Thom map for the inclusion $S(2\mathbb{R})\hookrightarrow S^{2\mathbb{R}}$, can be treated as a stable map from $S^{\mathbb{R}}$ to $B_{+}$. This stable map induces the map
$$
p^{*}: \{S^{-\frac{\sigma(M)}{16}\mathbb{H}}\wedge B_{+},S^{b^{+}(M)\tilde{\mathbb{R}}}\}^{G}\rightarrow \{S^{\mathbb{R}-\frac{\sigma(M)}{16}\mathbb{H}},S^{b^{+}(M)\tilde{\mathbb{R}}}\}^{G}
$$
that sends $\alpha$ to $\alpha\circ (\text{id}_{ S^{-\frac{\sigma(M)}{16}\mathbb{H}}}\wedge p)$.
\begin{defi}
When $B=S(2\mathbb{R})$, we define the $G$-equivariant Bauer-Furuta invariant as $$\operatorname{BF}^{G}(N,\mathfrak{s}):=p^{*}[\widetilde{sw}]\in\{S^{\mathbb{R}-\frac{\sigma(M)}{16}\mathbb{H}},S^{b^{+}(M)\tilde{\mathbb{R}}}\}^{G}.$$ 
\end{defi}
In both cases, we define the $S^{1}$-equivariant Bauer-Furuta invariant and the nonequivariant Bauer-Furuta invariant as the restriction of the $G$-equivariant Bauer-Furuta invariant: 
$$
\BF^{S^{1}}(N,\mathfrak{s}):=\Res^{G}_{S^{1}}(\BF^{G}(N,\mathfrak{s}))$$
$$\BF^{\{e\}}(N,\mathfrak{s}):=\Res^{G}_{\{e\}}(\BF^{G}(N,\mathfrak{s}))
.$$
In \cite{KronheimerMrowkaDehnTwist}, Kronheimer-Mrowka gave an alternative definition of  $\BF^{\{e\}}(N,\mathfrak{s})$: Take a generic section $r$ of the bundle $W^{-}$ that is transverse to the map $sw$. Then the preimage $sw^{-1}(r)$ is a manifold. When $B$ is a point, the canonical trivilizations of the bundles $W^{\pm}$ determine a stable framing on $sw^{-1}(r)$. When $B$ is $S(2\mathbb{R})$, we fix a stable framing on $B$ that bounds a framed disk. Then together with the trivilizations of $W^{\pm}$, this determines a stable framing on $sw^{-1}(r)$. In \cite{KronheimerMrowkaDehnTwist}, the family Bauer-Furuta invariant is defined as the framed cobordism class of $sw^{-1}(r)$.

Recall that the framed cobordism classes of smooth $n$-manifolds are classified by elements in the $n$-th stable homotopy group of spheres. The following lemma states that our definition of $\BF^{\{e\}}$ is essentially  identical to Kronheimer-Mrowka's definition.

\begin{lem}\label{lem: equivalent definition of BF}
The framed cobordism class of $sw^{-1}(r)$ is classified by the nonequivariant Bauer-Furuta invariant $\BF^{\{e\}}(N,\mathfrak{s})$.
\end{lem}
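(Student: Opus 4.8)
The plan is to recognize this as an instance of the classical Pontryagin--Thom correspondence; the only real content is to match the stable framing on $sw^{-1}(r)$ coming from the trivializations of $W^{\pm}$ with the one produced by the Pontryagin--Thom construction applied to $\widetilde{sw}$. The first step is to choose a convenient representative $r$: under the trivialization $W^{-}\cong B\times\mathbb{R}^{N^{-}}$ (write $N^{\pm}=\dim W^{\pm}$) take $r$ to be the constant section at a value $y\in\mathbb{R}^{N^{-}}$ that is a regular value of the finite-dimensional approximation $sw$ and misses the point at infinity; such $y$ are dense by Sard, the constant section at such a $y$ is transverse to $sw$ (because $sw$ is a bundle map over $B$, transversality there reduces to $y$ being a regular value of $\widetilde{sw}$), and Kronheimer--Mrowka's invariant does not depend on the transverse section chosen, so this is admissible. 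Since $sw$ carries $\infty$ to $\infty$ and the $W^{\pm}_{\infty}$ are compact, $sw^{-1}(r)$ is closed in the compact space $W^{+}_{\infty}$ and avoids $\infty$, hence is a compact submanifold of the total space $W^{+}$.

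Next I would handle the case $B=\mathrm{pt}$. There the canonical trivializations of $W^{\pm}$ identify $sw$ with the underlying nonequivariant map of $\widetilde{sw}\colon S^{N^{+}}\to S^{N^{-}}$, and identify $sw^{-1}(r)$ with $\widetilde{sw}^{-1}(y)$. For a constant section the identification of the normal bundle of $sw^{-1}(r)$ in $W^{+}$ with $W^{-}$ is induced directly by $d(sw)$, so after the trivialization of $W^{-}$ it is exactly the normal framing of $\widetilde{sw}^{-1}(y)$ that $d\widetilde{sw}$ together with a framing of $T_{y}S^{N^{-}}$ produce. That is precisely the data the Pontryagin--Thom isomorphism records, so the classical Pontryagin--Thom theorem yields that the framed cobordism class of $sw^{-1}(r)$ is $[\widetilde{sw}]=\BF^{\{e\}}(M)$.

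For $B=S(2\mathbb{R})$ the bundle $W^{+}$ is the trivialized bundle $B\times\mathbb{R}^{N^{+}}$, so $(W^{+})_{\infty}\cong S^{N^{+}}\wedge B_{+}$, and again $sw^{-1}(r)=\widetilde{sw}^{-1}(y)$, now a framed submanifold of $B\times\mathbb{R}^{N^{+}}$. To turn this into an abstract stably framed manifold one embeds $B=S(2\mathbb{R})\hookrightarrow 2\mathbb{R}$ with its trivial normal bundle, placing $sw^{-1}(r)$ in $\mathbb{R}^{2+N^{+}}$; the induced stable framing is the normal framing inside $B\times\mathbb{R}^{N^{+}}$ together with the framing of the normal bundle of $S(2\mathbb{R})$ in $2\mathbb{R}$, i.e.\ the bounding stable framing of $S^{1}$, which is exactly the framing convention in Kronheimer--Mrowka's definition. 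The Pontryagin--Thom collapse for this embedding factors through the collapse $S^{2\mathbb{R}}\to S^{\mathbb{R}}\wedge(S(2\mathbb{R})\cup\{\infty\})$---which is the map $p$ of (\ref{eq: cofiber sq 3})---followed by a suspension of $\widetilde{sw}$, so the framed cobordism class of $sw^{-1}(r)$ represents $p^{*}[\widetilde{sw}]=\BF^{\{e\}}(N)$.

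The main obstacle is compatibility bookkeeping rather than anything conceptual: one must check that every identification of a group with $\mathbb{Z}$---hence every framing---is used consistently across the two definitions, in particular the trivialization of $V^{+}$ that is canonical only up to homotopy via $\pi_{i}(Sp)=0$, the trivialization of $H^{+}_{2}$ fixed by the homology orientation, and the choice of the bounding (not the Lie-group) framing of $S^{1}$, so that the two stable framings literally agree and not merely up to a universal sign. I would carry this out by writing down explicitly the framing Kronheimer--Mrowka extract from $d(sw)$ and the trivializations of $W^{\pm}$ and matching it term by term with the Pontryagin--Thom normal framing; independence of the remaining auxiliary choices ($m$, $n$, the retract $\rho$, and the regular value $y$) is then the usual cobordism argument.
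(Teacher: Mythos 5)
Your proposal is correct and follows essentially the same route as the paper: choose a constant section at a generic regular value via Sard, identify $sw^{-1}(r)$ with the regular preimage under the composite of the collapse map $p$ for $B\hookrightarrow S^{2\mathbb{R}}$ with (a suspension of) $\widetilde{sw}$, and observe that the bounding framing of $S^{1}$ is exactly the one induced by that inclusion, so Pontryagin--Thom gives $p^{*}[\widetilde{sw}]=\BF^{\{e\}}(N,\mathfrak{s})$. Your extra bookkeeping on trivializations only elaborates what the paper leaves implicit.
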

\begin{proof}
By Sard's theorem, we can take $r$ to be a constant section that sends the whole $B$ to a generic point $r_{0}\in S^{(m+b^{+}(M))\tilde{\mathbb{R}}+n\mathbb{H}}$. Then $sw^{-1}(r)=\widetilde{sw}^{-1}(r_{0})$ and it is also the preimage of the point $$\{0\}\times r_{0}\in S^{\mathbb{R}+(m+b^{+}(M))\tilde{\mathbb{R}}+n\mathbb{H}} $$ under the composition 
\begin{equation}\label{eq: Bauer-Furuta unstable map}
( \text{id}_{S^{\tilde{\mathbb{R}}}}\wedge \widetilde{sw})\circ (\text{id}_{ S^{(n-\frac{\sigma(M)}{16})\mathbb{H}+m\tilde{\mathbb{R}}}}\wedge p):S^{2\mathbb{R}+m\tilde{\mathbb{R}}+(n-\frac{\sigma(M)}{16})\mathbb{H} }\rightarrow S^{\mathbb{R}+(m+b^{+}(M))\tilde{\mathbb{R}}+n\mathbb{H}}.
\end{equation}

Because $r_{0}$ is a regular value of $\widetilde{sw}$ and any point in $\{0\}\times B_{+}$ is a regular value of $p$, we see that $\{0\}\times r_{0}$ is indeed a regular value of the map (\ref{eq: Bauer-Furuta unstable map}). Recall that an element in the stable group of spheres defines a stably framed manifold by taking the preimage of a regular value and taking the induced framing. The proof is finished by observing that the stable framing on $B$ that bounds a framed disk (the one we used to fix the framing on $sw^{-1}(r)$) is exactly the framing induced by the inclusion $B\hookrightarrow S^{2\mathbb{R}}$. 
\end{proof}

\subsubsection{Some properties of the Bauer-Furuta invariant}
In this subsection, we summarize some important properties of the Bauer-Furuta invairant. We start with a vanishing result. Recall from Definition 
\ref{defi: product family and twisted family} that on the trivial bundle $N=M\times S^{1}$, there are two spin structures: the product spin structure $\tilde{\mathfrak{s}}$ and the twisted spin structure $\tilde{\mathfrak{s}}^{\tau}$.  

\begin{lem}\label{lem: vanishing}
The Bauer-Furuta invariants $\BF^{G}$, $\BF^{S^{1}}$ and  $\BF^{\{e\}}$ of the product spin structure $\tilde{\mathfrak{s}}$ are all vanishing.
\end{lem}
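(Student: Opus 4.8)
The plan is to show that the Bauer--Furuta map for the product family $(M\times S^1,\tilde{\mathfrak s})$ is, after the finite-dimensional approximation, literally a suspension of the Bauer--Furuta map for the single manifold $M$, and then to observe that the class dies because of the extra circle factor. Concretely, since the product bundle $N=M\times S^1$ is trivial, all the Hilbert bundles $\mathcal V^{\pm},\mathcal U^{\pm}$ of Section 2.2.2 are products $B\times(\text{fiber})$, the operator $l=D\oplus(d^+,d^*)$ is $x$-independent, the nonlinear term $c$ is $x$-independent, and hence the finite-dimensional approximation $sw\colon W^+_\infty\to W^-_\infty$ can be chosen to be $\mathrm{id}_{B_+}\wedge sw_0$, where $sw_0\colon S^{m\tilde{\mathbb R}+(n-\sigma(M)/16)\mathbb H}\to S^{(m+b^+(M))\tilde{\mathbb R}+n\mathbb H}$ is the approximated Seiberg--Witten map for $M$ alone representing $\BF^G(M,\mathfrak s)$. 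Here I must check that the product spin structure is exactly the one for which the lifts $f^\pm_*$ are the identity (Definition \ref{defi: product family and twisted family}), so that no monodromy is introduced into the trivialization of $V^+$ or of $H^+_2$; this is where the hypothesis ``product'' as opposed to ``twisted'' is used.

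Granting this, $[\widetilde{sw}]\in\{S^{-\sigma(M)/16\,\mathbb H}\wedge B_+,\,S^{b^+(M)\tilde{\mathbb R}}\}^G$ is simply $\BF^G(M,\mathfrak s)$ smashed with the identity on $B_+=S^1_+$, i.e. the image of $\BF^G(M,\mathfrak s)$ under the map induced by the projection $B_+\to S^0$. Then $\BF^G(N,\tilde{\mathfrak s})=p^*[\widetilde{sw}]$, where $p^*$ is precomposition with the stable map $p\colon S^{\mathbb R}\to B_+$ coming from the cofiber sequence (\ref{eq: cofiber sq 3}). But the composition
\[
S^{\mathbb R}\xrightarrow{\ p\ } B_+=S^1_+\xrightarrow{\ \text{projection}\ } S^0
\]
is the Pontryagin--Thom collapse followed by crushing $S^1$ to a point, which is stably null-homotopic (the circle with its bounding framing is null-cobordant; equivalently this composite factors through the cofiber sequence term $S^1\hookrightarrow S^1_+\to S^0$ applied to a map that is already defined on $S^0$). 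Hence $\BF^G(N,\tilde{\mathfrak s})=\BF^G(M,\mathfrak s)\circ(\text{projection}\circ p)=0$. Restricting along $S^1\subset G$ and $\{e\}\subset G$ gives the vanishing of $\BF^{S^1}$ and $\BF^{\{e\}}$ as well, since $\Res$ is a homomorphism.

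An alternative, and perhaps cleaner, route for $\BF^{\{e\}}$: use Lemma \ref{lem: equivalent definition of BF}, which identifies $\BF^{\{e\}}(N,\tilde{\mathfrak s})$ with the framed cobordism class of $sw^{-1}(r)$ for a constant section $r\equiv r_0$. In the product case $sw^{-1}(r)=(\widetilde{sw}_0)^{-1}(r_0)\times S^1$, with the framing being the product of the framing on $(\widetilde{sw}_0)^{-1}(r_0)$ and the bounding framing on $S^1$; since $S^1$ with its bounding framing bounds $D^2$ (framed), the product manifold bounds $(\widetilde{sw}_0)^{-1}(r_0)\times D^2$ compatibly framed, so it is framed null-cobordant. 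This argument, however, does not directly see the equivariant enhancement.

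I expect the main obstacle to be bookkeeping the canonical trivializations: one must verify that for the \emph{product} spin structure the trivializations of $V^-$, $U^+$ (from Kuiper) and of $H^+_2$ (from a homology orientation) are genuinely $S^1$-independent, and that the homotopically-canonical trivialization of $V^+$ coming from $\pi_{\le 2}Sp(k)=0$ can be chosen compatibly with the product structure — i.e. that $\widetilde{sw}$ really is $\mathrm{id}_{S^1_+}\wedge\widetilde{sw}_0$ and not that smashed with some possibly-nontrivial clutching. This is exactly the point at which ``$f^\pm_*=\mathrm{id}$'' (as opposed to the deck transformation $\tau$, which would introduce the essential loop in $SO(4)$ and change the answer) enters, and it is why the twisted family behaves differently. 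Once this identification is in place, the vanishing is formal from the fact that $S^1_+\to S^0$ kills the image of $p$.
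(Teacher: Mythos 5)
Your proposal is correct and follows essentially the same route as the paper: identify $[\widetilde{sw}]$ for the product family as the pullback of $\BF^{G}(M,\mathfrak{s})$ along the projection $q\colon B_{+}\to S^{0}$, and then kill it under $p^{*}$ using the cofiber sequence (\ref{eq: cofiber sq 3}). The only cosmetic difference is that the paper invokes exactness of the induced long exact sequence (so $p^{*}\circ q^{*}=0$), whereas you verify directly that the composite $q\circ p$ is stably null; these are the same fact, and your framed-bordism justification for the equivariant statement and the extra bookkeeping of trivializations are fine.
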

\begin{proof}
The cofiber sequence (\ref{eq: cofiber sq 3}) induces a long exact sequence
$$
\cdots \rightarrow \{S^{-\frac{\sigma(M)}{16}\mathbb{H}},S^{b^{+}(M)\tilde{\mathbb{R}}}\}^{G}\xrightarrow{q^{*}}\{S^{-\frac{\sigma(M)}{16}\mathbb{H}}\wedge B_{+},S^{b^{+}(M)\tilde{\mathbb{R}}}\}^{G}\xrightarrow{p^{*}} \{S^{\mathbb{R}-\frac{\sigma(M)}{16}\mathbb{H}},S^{b^{+}(M)\tilde{\mathbb{R}}}\}^{G}\rightarrow \cdots 
$$
where $q^{*}$ is induced by the map $q:B_{+}\rightarrow S^{0}$ that preserves the base point and sends $B$ to the other point. By its definition, the map $\widetilde{sw}$ for $(M\times S^{1},\tilde{\mathfrak{s}})$ is just pull-back of the corresponding map for $(M,\mathfrak{s})$ via the map $q$. So $[\widetilde{sw}]\in \text{Image}( q^{*})$, which implies $$\operatorname{BF}^{G}((M\times S^{1},\tilde{\mathfrak{s}}))=p^{*}( [\widetilde{sw}])=0.$$ The invariants $\BF^{S^{1}}$ and  $\BF^{\{e\}}$ vanish  because  $\BF^{G}$ is vanishing.
\end{proof}
Regarding the Bauer-Furuta invariant of the twisted spin structure, Kronheimer-Mrowka \cite{KronheimerMrowkaDehnTwist} proved the following result by studying the stable framing on the moduli space.
\begin{pro}\label{lem: twisted family} We have 
\begin{equation}
    \BF^{\{e\}}(M\times S^{1},\tilde{\mathfrak{s}}^{\tau})=\begin{cases}
    \eta\cdot \BF^{\{e\}}(M,\mathfrak{s}) &\text{when } \sigma(M)\equiv 16 \mod32\\
    0 &\text{when } 32\mid \sigma(M).
    \end{cases}
\end{equation}
Here $\eta\in \{S^{\mathbb{R}},S^{0}\}^{\{e\}}$ denotes the Hopf map. 
\end{pro}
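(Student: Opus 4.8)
The plan is to follow Kronheimer--Mrowka and compute the proposition by identifying the moduli space together with its stable framing, using Lemma \ref{lem: equivalent definition of BF} to pass freely between the stable homotopy element $\BF^{\{e\}}$ and a framed manifold. Since the fiber $M$ is constant along $B=S^{1}=S(2\mathbb{R})$, for the twisted family the Seiberg--Witten map $sw$ is obtained from the single-manifold map $sw_{M}$ by a clutching construction: cut $S^{1}$ open to an interval, use $sw_{M}$ on $M\times[0,1]$, and reglue the two ends of the spinor data by the deck transformation $\tau$, which (by the description of $\tilde{\mathfrak{s}}^{\tau}$ preceding Definition \ref{defi: product family and twisted family}) acts as $-\mathrm{id}$ on the quaternionic summands $\mathcal{V}^{\pm}$ and trivially on the real summands $\mathcal{U}^{\pm}$. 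Choosing a constant section $r\equiv r_{0}$ as in the proof of Lemma \ref{lem: equivalent definition of BF}, one gets $sw^{-1}(r)\cong \mathcal{M}_{M}\times S^{1}$, where $\mathcal{M}_{M}=sw_{M}^{-1}(r_{0})$ is the single-manifold moduli space, a stably framed manifold representing $\BF^{\{e\}}(M,\mathfrak{s})$. So the entire content of the proposition is to determine the stable framing that the clutching puts on $\mathcal{M}_{M}\times S^{1}$.

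First I would show this framing is the product of the framing of $\mathcal{M}_{M}$ with a framing of $S^{1}$ that records only how $\tau$ twists the normal data around the loop. The stable framing on $sw^{-1}(r)$ is built from the canonical trivializations of $W^{\pm}=U^{\pm}\oplus V^{\pm}$ (Kuiper's theorem for $\mathcal{V}^{-}$ and $\mathcal{U}^{+}$, a homology orientation for $H_{2}^{+}$, and $2$-connectedness of $Sp$ for $V^{+}$) together with the derivative of $sw$; concretely the stable normal framing of $sw^{-1}(r)$ is that of the virtual index bundle with reversed sign, $-(W^{+}\ominus W^{-})$. The $U^{\pm}$ factors carry no monodromy, while the monodromy on $V^{\pm}$ is $-\mathrm{id}$, resolved by the homotopically canonical $Sp$-path from $\mathrm{id}$ to $-\mathrm{id}$. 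Bookkeeping these trivializations --- in particular checking that the $-\mathrm{id}$ monodromies on $V^{+}$ and on $V^{-}$ do not cancel but leave exactly the twist on $W^{+}\ominus W^{-}$, a quaternionic bundle of rank $\operatorname{ind}_{\mathbb{H}}D(M)=-\sigma(M)/16$ --- shows that the framing on $\mathcal{M}_{M}\times S^{1}$ is the product framing twisted, as one goes around $S^{1}$, by $-\mathrm{id}$ acting on $\mathbb{H}^{-\sigma(M)/16}$ through the canonical $Sp$-path. (For the product spin structure this twist is absent and the honest product framing bounds $\mathcal{M}_{M}\times D^{2}$, reproving Lemma \ref{lem: vanishing}.) Multiplicativity of framed cobordism then gives
\begin{equation*}
\BF^{\{e\}}(M\times S^{1},\tilde{\mathfrak{s}}^{\tau})=\theta_{-\sigma(M)/16}\cdot\BF^{\{e\}}(M,\mathfrak{s}),
\end{equation*}
where $\theta_{\ell}\in\pi_{1}^{s}$ is the class of $S^{1}$ with framing twisted by $-\mathrm{id}$ on $\mathbb{H}^{\ell}$.

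It remains to compute $\theta_{\ell}$. Since $-\mathrm{id}_{\mathbb{H}^{\ell}}=(-\mathrm{id}_{\mathbb{H}})^{\oplus\ell}$ and the canonical $Sp$-path is the $\ell$-fold direct sum of the canonical $Sp(1)$-path, the framing curve $S^{1}\to\SO$ of $\theta_{\ell}$ is $\ell$ times that of $\theta_{1}$ in $\pi_{1}(\SO)=\mathbb{Z}/2$; as the $J$-homomorphism $\pi_{1}(\SO)\xrightarrow{\cong}\pi_{1}^{s}=\mathbb{Z}/2\langle\eta\rangle$ is an isomorphism, $\theta_{\ell}=\ell\,\theta_{1}$, so $\theta_{\ell}=\theta_{1}$ for $\ell$ odd and $\theta_{\ell}=0$ for $\ell$ even. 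Finally $\theta_{1}=\eta$: the $Sp(1)$-path $t\mapsto e^{\pi\mathbf{i}t}$ from $\mathrm{id}$ to $-\mathrm{id}$ descends, under the double cover $Sp(1)=\Spin(3)\to\SO(3)$, to a genuine loop (both endpoints map to $\mathrm{id}_{\SO(3)}$), namely the nontrivial class in $\pi_{1}(\SO(3))=\mathbb{Z}/2$, whence $\theta_{1}=J(\text{generator})=\eta$. Since $-\sigma(M)/16$ is odd exactly when $\sigma(M)\equiv16\bmod 32$ and even exactly when $32\mid\sigma(M)$, and $\eta\cdot\BF^{\{e\}}(M,\mathfrak{s})=\BF^{\{e\}}(M,\mathfrak{s})\cdot\eta$ (as $2\eta=0$), this yields the stated formula. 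The main obstacle is the monodromy bookkeeping of the previous paragraph: one must check carefully that, after passing to the canonical trivializations from Kuiper's theorem and the $Sp$-trivialization of $V^{+}$, the residual twist on the stable normal framing of $\mathcal{M}_{M}\times S^{1}$ is precisely $-\mathrm{id}$ on a quaternionic space of rank equal to the Dirac index, with no spurious cancellation or doubling; by contrast the identifications $\theta_{1}=\eta$ and $\theta_{\ell}=\ell\,\theta_{1}$ are then formal.
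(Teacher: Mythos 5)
The paper itself does not prove Proposition \ref{lem: twisted family}: it is quoted from Kronheimer--Mrowka, who are credited with proving it ``by studying the stable framing on the moduli space.'' Your proposal reconstructs an argument of exactly that kind, and its architecture --- identify the family moduli space with $\mathcal{M}_M\times S^1$, reduce the statement to the framing twist picked up around the circle, evaluate that twist through the $J$-homomorphism $\pi_1(\SO)\to\pi_1^{s}$, and read off the parity of the quaternionic index $-\sigma(M)/16$ --- is the right one, and your final numerology agrees with the statement.

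The genuine gap sits precisely at the step you defer as ``bookkeeping,'' and the justification you offer for it does not work as stated. First, for the twisted family the bundle $W^-$ is clutched by $-1$ on its spinor summand, so a constant section $r\equiv r_0$ with generic $r_0$ (which necessarily has nonzero spinor component) does not exist; you must either choose a section compatible with the clutching or first pass through the canonical trivializations, and it is exactly there that the twist enters, so this cannot be elided. Second, and more seriously, your identification $\theta_1=\eta$ rests on the claim that the framing curve is the descent of the $Sp(1)$-path $t\mapsto e^{\pi \mathbf{i} t}$ to a loop in $\SO(3)$ under $\Spin(3)\to\SO(3)$. But the normal directions of the moduli space, per quaternionic unit of index, are all of $\mathbb{H}=\mathbb{R}^4$, on which that path acts by multiplication and is \emph{not} a loop, and nothing in your setup explains why the residual twist should factor through the adjoint representation on $\mathbb{R}^3$. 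The actual mechanism is the mismatch between the left $S^1\subset\Pin$-action (which commutes with $sw_M$ and is what geometrically undoes the clutching by $-1$) and the quaternionic-linear, i.e.\ right-multiplication, trivializations of $V^{\pm}$ that the definition of $\BF$ requires: the discrepancy accumulated around the circle is the conjugation loop $q\mapsto e^{-\pi i t}\,q\,e^{\pi i t}$ on each $\mathbb{H}$-summand, which is a genuine loop, equal to $\mathrm{id}_{\mathbb{R}}\oplus \mathrm{Ad}(e^{\pi i t})$, hence the generator of $\pi_1(\SO)$; summing the contributions of $V^+$ and $V^-$ gives parity $(n-\tfrac{\sigma(M)}{16})+n\equiv \tfrac{\sigma(M)}{16}\pmod 2$, and applying $J$ yields $\eta$ exactly when $\sigma(M)\equiv 16\pmod{32}$. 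Without isolating this mechanism your bookkeeping is underdetermined: for instance, trivializing both $V^{\pm}$ by the left-multiplication paths $e^{\pi i t}$ and invoking the $S^1$-equivariance of $sw_M$ would ``show'' that the twisted and product families have the same, hence vanishing, invariant, contradicting Kronheimer--Mrowka's computation. So the plan is correct, but the decisive framing computation remains to be carried out along these lines.
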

\begin{rmk}It would be interesting to prove a generalization of Lemma \ref{lem: twisted family} for $\BF^{G}(M\times S^{1},\tilde{\mathfrak{s}}^{\tau})$ and $\BF^{S^{1}}(M\times S^{1},\tilde{\mathfrak{s}}^{\tau})$. 
\end{rmk}

Next, we give a connected sum formula for the family Bauer-Furuta invariants. This formula was orginally proved by Bauer \cite{BauerFurutaII} for a single 4-manifold. 

To set up the theorem, we let $(N_{i},\mathfrak{s}_{i})$ $(i=1,2)$ be two spin families over $B=S(2\mathbb{R})$ with fiber $M_{i}$, both satisfying Assumption \ref{assum: bundle}. To form the connected sum, we pick sections $\gamma_{i}:B\rightarrow N_{i}$. By our Assumption \ref{assum: bundle} (i), the section $\gamma_{i}$ is unique up to homotopy. We remove small, standard 4-balls around these sections to form the family $N_{i}-D^{4}\times S^{1}$ of 4-manifolds with boundary. Then we can form the fiberwise connected sum by identifying the collars of their boundaries. To fix such identification, we need to choose a smooth family of orientation reversing isomorphisms 
$$
\tilde{\phi}:=\{\phi_{x}:T_{\gamma_{1}(x)}(M_{1})_{x}\xrightarrow{\cong} T_{\gamma_{2}(x)}(M_{2})_{x}\}_{x\in B}.
$$
We use $N_{1}\#_{\tilde{\phi}}N_{2}$ to denote the resulting bundle over $B$, with fiber $M_{1}\# M_{2}$. In general, the result  $N_{1}\#_{\tilde{\phi}}N_{2}$ will depend on choices of $\tilde{\phi}$ up to homotopy. Because $\pi_{1}(\SO(4))=\mathbb{Z}/2$, there are essentially two choices.

\begin{lem}\label{lem: gluing spin structure}
There exists exactly one choice of $\widetilde{\phi}$ such that the spin structures $\mathfrak{s}_{1},\mathfrak{s}_{2}$ can be glued together to form a spin structure on $N_{1}\#_{\tilde{\phi}}N_{2}$. We denote this choice as $\tilde{\phi}(\mathfrak{s}_{1},\mathfrak{s}_{2})$ and denote the resulting spin structure by $\mathfrak{s}_{1}\#\mathfrak{s}_{2}$.
\end{lem}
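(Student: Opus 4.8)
The plan is to localize the gluing problem to the \emph{neck} of the fiberwise connected sum, where deciding whether the two spin structures match becomes a computation in $H^{1}(S^{3}\times S^{1};\mathbb{Z}/2)\cong\mathbb{Z}/2$, and then to show that the two homotopy classes of $\widetilde{\phi}$ produce opposite values of this obstruction. Write $N_{i}^{\circ}=N_{i}-D^{4}\times S^{1}$. Because $M_{i}$ is simply connected (Assumption \ref{assum: bundle}), so is $M_{i}-D^{4}$, and a Serre spectral sequence argument gives $H^{1}(N_{i}^{\circ};\mathbb{Z}/2)\cong H^{1}(S^{1};\mathbb{Z}/2)\cong\mathbb{Z}/2$; hence $N_{i}^{\circ}$ carries exactly two spin structures, and $\mathfrak{s}_{i}$ restricts to one of them, say $\mathfrak{s}_{i}^{\circ}$. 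For either choice of $\widetilde{\phi}$, the total space $N:=N_{1}\#_{\widetilde{\phi}}N_{2}$ is covered by $N_{1}^{\circ}$ and $N_{2}^{\circ}$ with overlap a tubular neck $\mathcal{T}\simeq S^{3}\times S^{1}$, and the chart transition over $\mathcal{T}$ (in particular its effect on the frame bundle of the vertical tangent bundle) is assembled from the family $\widetilde{\phi}$ of vertical tangent-space isomorphisms.

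Two standard facts then frame the problem. First, Mayer--Vietoris for the cover $\{N_{1}^{\circ},N_{2}^{\circ}\}$ (all pieces and $\mathcal{T}$ connected) shows that $H^{1}(N;\mathbb{Z}/2)\to H^{1}(N_{1}^{\circ};\mathbb{Z}/2)\oplus H^{1}(N_{2}^{\circ};\mathbb{Z}/2)$ is injective and that the connecting map out of $H^{0}(\mathcal{T};\mathbb{Z}/2)$ vanishes; consequently a spin structure on $N$ is determined by its restrictions to $N_{1}^{\circ}$ and $N_{2}^{\circ}$, so if one restricting to $(\mathfrak{s}_{1}^{\circ},\mathfrak{s}_{2}^{\circ})$ exists it is unique, which is what makes $\mathfrak{s}_{1}\#\mathfrak{s}_{2}$ well defined. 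Second, such a spin structure exists if and only if the two spin structures induced on the vertical $\SO(4)$-bundle over $\mathcal{T}$ agree: the one pulled back from $\mathfrak{s}_{1}^{\circ}$ through the $N_{1}^{\circ}$-chart, call it $\mathfrak{a}$, and the one pulled back from $\mathfrak{s}_{2}^{\circ}$ through the $N_{2}^{\circ}$-chart (which incorporates $\widetilde{\phi}$), call it $\mathfrak{b}_{\widetilde{\phi}}$. Set $\epsilon(\widetilde{\phi})=\mathfrak{a}-\mathfrak{b}_{\widetilde{\phi}}\in H^{1}(\mathcal{T};\mathbb{Z}/2)\cong\mathbb{Z}/2$, so that gluing works exactly when $\epsilon(\widetilde{\phi})=0$. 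The heart of the proof is that $\epsilon$ takes different values on the two choices of $\widetilde{\phi}$: these differ by a loop $g\colon S^{1}\to\SO(4)$ generating $\pi_{1}(\SO(4))\cong\mathbb{Z}/2$, and replacing $\widetilde{\phi}$ by the other choice amounts, over $\mathcal{T}$, to post-composing the transition isomorphism of frame bundles with the gauge transformation $\psi\colon\mathcal{T}\to\SO(4)$, $\psi(s,\theta)=g(\theta)$, where $\theta$ is the $S^{1}$-coordinate. A gauge transformation acts on the torsor of spin structures by translation by $\psi^{*}\omega$, where $\omega\in H^{1}(\SO(4);\mathbb{Z}/2)$ is the generator classifying the extension $1\to\mathbb{Z}/2\to\Spin(4)\to\SO(4)\to1$ (immediate from the \v{C}ech description of spin structures as lifts of transition cocycles). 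Since $g$ generates $\pi_{1}(\SO(4))$ the class $g^{*}\omega\in H^{1}(S^{1};\mathbb{Z}/2)$ is nonzero, and $\pi^{*}g^{*}\omega$ (with $\pi\colon\mathcal{T}\simeq S^{3}\times S^{1}\to S^{1}$) is the nonzero element of $H^{1}(\mathcal{T};\mathbb{Z}/2)$. Hence the two values of $\epsilon$ differ by this nonzero element, so exactly one choice of $\widetilde{\phi}$ gives $\epsilon=0$; that is $\widetilde{\phi}(\mathfrak{s}_{1},\mathfrak{s}_{2})$, and for it the unique spin structure on $N$ restricting to $(\mathfrak{s}_{1}^{\circ},\mathfrak{s}_{2}^{\circ})$ is $\mathfrak{s}_{1}\#\mathfrak{s}_{2}$.

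I expect the main obstacle to be the bookkeeping in the last step: pinning down the $\SO(4)$-bundles over the neck, verifying that switching $\widetilde{\phi}$ alters the transition data over $\mathcal{T}$ by exactly the gauge transformation $\psi$ above, and confirming the translation formula (spin structures shift by $\psi^{*}\omega$). These are most transparent worked out in \v{C}ech cohomology for the cover $\{N_{1}^{\circ},N_{2}^{\circ}\}$. The remaining inputs---simple connectivity of $M_{i}-D^{4}$, $\pi_{1}(\SO(4))\cong\mathbb{Z}/2$, the cohomology of $S^{3}\times S^{1}$, and the Mayer--Vietoris computation---are routine.
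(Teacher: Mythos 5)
Your proposal is correct and follows essentially the same route as the paper: both localize the problem to the neck $S^{3}\times S^{1}$ and use the fact that the two homotopy classes of $\tilde{\phi}$ differ by an essential loop in $\SO(4)$, which swaps the two spin structures on the vertical bundle over the neck, so exactly one choice matches the restrictions of $\mathfrak{s}_{1}$ and $\mathfrak{s}_{2}$. Your \v{C}ech/gauge-translation formulation (shift by $\psi^{*}\omega$) and the Mayer--Vietoris uniqueness step are just a cohomological restatement of the paper's observation that the boundary ``Dehn twist'' $\iota$ exchanges the product and twisted family spin structures on $S^{3}\times S^{1}$, with the well-definedness of $\mathfrak{s}_{1}\#\mathfrak{s}_{2}$ handled in the paper by its closing remark about the two lifts of the gluing map.
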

\begin{proof}Denote by $\tilde{\phi}^{\pm}$ the two choices of $\tilde{\phi}$. Then they provide gluing maps 
$$
f^{\pm}: \partial (N_{1}-D^{4}\times S^{1})\rightarrow \partial (N_{2}-D^{4}\times S^{1})
$$
which differ from each other by a Dehn twist on $\partial (N_{2}-D^{4}\times S^{1})$. Under any boundary parametrization $\partial (N_{2}-D^{4}\times S^{1})\cong S^{3}\times S^{1}$, this Dehn twist can be written as $$\iota(v,x)=(\alpha(x)v,x) \text{ for }(v,x)\in S^{3}\times S^{1},$$ where $\alpha:S^{1}\rightarrow SO(4)$ is an essential loop. Note that $S^{3}\times S^{1}$, regarded as the product $S^{3}$-bundle over $S^{1}$, has two family spin structures (the product spin structure and the twisted spin structure), which are related to each other by $\iota$. We see that exactly only of the two maps $f^{\pm}$ sends $\mathfrak{s}_{1}|_{\partial (N_{1}-D^{4}\times S^{1}) }$ to   $\mathfrak{s}_{2}|_{\partial (N_{2}-D^{4}\times S^{1}) }$. This finishes the proof. We also note that when $\tilde{\phi}=\tilde{\phi}(\mathfrak{s}_{1},\mathfrak{s}_{2}),$ the gluing map on the boundary has two lifts to the gluing map on the spin bundle, but they give isomorphic spin structures on the connected sum.
\end{proof}
From the discussion above, there is a unique way to take connected sum of two spin families $(N_{i},\mathfrak{s}_{i})$ together. The resulting spin family 
$(N_{1}\#_{\widetilde{\phi}(\mathfrak{s}_{1}, \mathfrak{s}_{2})}N_{2}, \mathfrak{s}_{1}\# \mathfrak{s}_{2}) $
will also  be written as $(N_{1},\mathfrak{s}_{1})\#(N_{2},\mathfrak{s}_{2})$. 

To talk about the Bauer-Furuta invariant of a connected sum, we also need to specify a rule for homology orientation as follows: Given homology orientations on $M_{1},M_{2}$, we let the homology orientation on $M_{1}\sharp M_{2}$ be defined by putting the oriented basis for $H_{+}^{2}(M_{1})$ in front of the oriented basis for $H_{+}^{2}(M_{2})$. The following theorem is a family version of Bauer's connected sum formula \cite{BauerFurutaII}.

\begin{pro}\label{pro: connected sum}
Let $(M\times S^{1},\widetilde{\mathfrak{s}})$ be the product family for some spin 4-manifold $(M, \mathfrak{s})$. Then we have  $$\operatorname{BF^{H}}((N_{1},\mathfrak{s}_{1})\#(M\times S^{1},\widetilde{\mathfrak{s}}))=\BF^{H}(N_{1},\mathfrak{s}_{1})\wedge \BF^{H}(M,\mathfrak{s})$$ for $H=G,S^{1}$ or $\{e\}$.
\end{pro}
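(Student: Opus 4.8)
The plan is to reduce this family connected-sum formula to the Bauer–Furuta gluing picture and a Pontryagin–Thom manipulation. First I would recall from the construction of $\widetilde{sw}$ that the Seiberg–Witten map for a connected sum of spin families is, up to finite-dimensional approximation and the $G$-equivariant deformation retraction $\rho$, the smash product of the individual approximated Seiberg–Witten maps: the linear part $l$ splits as a direct sum over the two summands (the harmonic forms and the Dirac kernels/cokernels add), and the quadratic term $c$ is supported near the two copies of the removed $4$-balls, so the usual neck-stretching/excision argument of Bauer \cite{BauerFurutaII} applies fiberwise over $B=S(2\mathbb{R})$. This is where one uses Lemma \ref{lem: gluing spin structure}: the spin structure $\mathfrak{s}_1\#\mathfrak{s}_2$ is precisely the one compatible with the gluing, so the trivializations of $U^{\pm}, V^{-}$ and the stable framing of $V^{+}$ for the connected sum are compatible with the tensor/smash of those for the two factors, and the homology-orientation convention (oriented basis of $H^2_+(M_1)$ before that of $M_2$) is exactly what makes the identification $S^{(b^+(M_1)+b^+(M_2))\tilde{\mathbb{R}}}\cong S^{b^+(M_1)\tilde{\mathbb{R}}}\wedge S^{b^+(M_2)\tilde{\mathbb{R}}}$ respect orientations with no sign. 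Hence at the level of stable $G$-maps,
\begin{equation*}
[\widetilde{sw}_{N_1\#(M\times S^1)}] = [\widetilde{sw}_{N_1}]\wedge [\widetilde{sw}_{M\times S^1}]
\end{equation*}
in $\{S^{-\frac{\sigma(M_1\# M)}{16}\mathbb{H}}\wedge B_+, S^{b^+(M_1\# M)\tilde{\mathbb{R}}}\}^G$, where on the right the two factors live over $B_+$ and $S^0$ respectively (the product family $M\times S^1$ has base a point after collapsing, cf. the proof of Lemma \ref{lem: vanishing}).

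Next I would account for the base. For a single family over $B=S(2\mathbb{R})$ the invariant is $p^*[\widetilde{sw}]$, where $p:S^{\mathbb{R}}\to B_+$ is the Pontryagin–Thom collapse from (\ref{eq: cofiber sq 3}). For the connected sum $(N_1,\mathfrak{s}_1)\#(M\times S^1,\widetilde{\mathfrak{s}})$ the base is still a single circle, and the section $\gamma$ along which we glue is the one coming from $\gamma_1$ in $N_1$; the $M$-summand is glued in trivially (constant family). Concretely, $\widetilde{sw}_{M\times S^1}$ is the pullback under $q:B_+\to S^0$ of $\widetilde{sw}_M$, so $[\widetilde{sw}_{N_1}]\wedge [\widetilde{sw}_{M\times S^1}] = \big([\widetilde{sw}_{N_1}]\wedge [\widetilde{sw}_M]\big)\circ(\mathrm{id}\wedge \Delta)$ where $\Delta:B_+\to B_+\wedge B_+$ is induced by the diagonal and one factor is immediately collapsed by $q$; since $q\circ(\text{diagonal component}) = \mathrm{id}_{B_+}$, this simplifies to $[\widetilde{sw}_{N_1}]\wedge[\widetilde{sw}_M]$ viewed as a stable map out of $S^{\,\cdot}\wedge B_+$ with the $M$-factor constant. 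Applying $p^*$, i.e. precomposing with $\mathrm{id}\wedge p$, and using that $\widetilde{sw}_M$ does not involve $B_+$, gives
\begin{equation*}
\mathrm{BF}^G\big((N_1,\mathfrak{s}_1)\#(M\times S^1,\widetilde{\mathfrak{s}})\big) = p^*[\widetilde{sw}_{N_1}]\wedge [\widetilde{sw}_M] = \mathrm{BF}^G(N_1,\mathfrak{s}_1)\wedge \mathrm{BF}^G(M,\mathfrak{s}),
\end{equation*}
as desired. The cases $H=S^1$ and $H=\{e\}$ then follow by applying the restriction homomorphisms $\Res^G_{S^1}$ and $\Res^G_{\{e\}}$, which are monoidal (they commute with smash products), so no separate argument is needed.

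The main obstacle is the first step: justifying carefully that the finite-dimensional approximated, retracted map $sw$ for the connected sum is $G$-equivariantly stably homotopic to the smash product of the two $sw$'s, with \emph{all} the identifications (Kuiper trivializations of $\mathcal{V}^-,\mathcal{U}^+$, the homology-orientation trivialization of $H^+_2$, and the $Sp$-trivialization of $V^+$) matching up on the nose, and that the neck-stretching homotopy can be performed $G$-equivariantly and in families over $S^1$ without introducing extra signs or an $\eta$-type correction. Here I would lean on Bauer's original neck-stretching argument \cite{BauerFurutaII}, checking that it is compatible with the $\Pin$-action (it is, since the $j$-action is by quaternionic multiplication on spinors and by $-1$ on the form part, both of which are preserved by the localization near the connect-sum region) and with the trivial $S^1$-family structure of the $M$-summand. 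The homology-orientation bookkeeping is the one genuinely delicate point, and the convention fixed just before the statement is chosen precisely to make it come out with a $+$ sign; I would verify this by comparing the induced orientations on $H^2_+$ under the obvious identification $H^2_+(M_1\# M) = H^2_+(M_1)\oplus H^2_+(M)$.
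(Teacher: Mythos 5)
Your proposal takes essentially the same route as the paper: the analytic core is deferred to Bauer's excision argument \cite{BauerFurutaII} (and its family version sketched in \cite{KronheimerMrowkaDehnTwist}), upgraded to the $\Pin$-equivariant setting by observing that the homotopy is built from cut-off functions and terms of the Seiberg--Witten map, all of which are $G$-equivariant; your extra bookkeeping with $q$, $\Delta$ and $p^{*}$ for the circle base and the trivializations is correct and just makes explicit what the paper leaves implicit. One small caveat: the quadratic term $c$ is \emph{not} supported near the removed $4$-balls (it is pointwise quadratic in the spinor over the whole fiber), and Bauer's homotopy does not rely on such locality but on deforming globally defined terms by scalar cut-off functions; since you lean on Bauer's argument for this step anyway, this mis-statement does not affect the structure of your proof.
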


\begin{proof}
The proof is essentially identical with the single 4-manifold case in \cite{BauerFurutaII}. (See \cite{KronheimerMrowkaDehnTwist} for a sketch proof for the family version (in the nonequivariant setting). A central step is an excision argument that builds a homotopy between the approximated Seiberg-Witten maps $\widetilde{sw}$ (\ref{eq: approximated SW map trivilized}) for the bundle
$$
N_{1}\cup (M\times S^{1})\cup (S^{4}\times S^{1}),
$$
viewed as a family over $S^{1}$ with fiber $M_{1}\cup M\cup S^{4}$ and the bundle $$(N\#(M\times S^{1})\cup (S^{4}\times S^{1}))\cup (S^{4}\times S^{1}),$$ viewed as a family over $S^{1}$ with fiber $(M_{1}\#M)\cup S^{4}\cup S^{4}$. This homotopy is constructed by multiplying various sections by scalar-valued real cut-off functions and applying various terms in the Seiberg-Witten map, which are all $G$-equivariant. Therefore, this homotopy is $G$-equivariant.
\end{proof}

As a corollary, we get the following result that computes the Bauer-Furuta invariant under family stabilization: 

\begin{cor}\label{cor: stablization BF}
Consider the product spin structure $\tilde{\mathfrak{s}}_{0}$ and the twisted spin structure $\tilde{\mathfrak{s}}^{\tau}_{0}$ over the product bundle $((S^{2}\times S^{2})\times S^{1})$. Then for any spin family $(N,\mathfrak{s})$ that satisfies Assumption \ref{assum: bundle},  we have 
\begin{equation}\label{eq: stablization product}
\BF^{G}((N,\mathfrak{s})\#(((S^{2}\times S^{2})\times S^{1}),\tilde{\mathfrak{s}}_{0}))=\BF^{G}(N,\mathfrak{s})\cdot e_{\tilde{\mathbb{R}}},
\end{equation}
and 
\begin{equation}\label{eq: stablization twist}
\BF^{G}((N,\mathfrak{s})\#(((S^{2}\times S^{2})\times S^{1}),\tilde{\mathfrak{s}}^{\tau}_{0}))=\BF^{G}(N,\mathfrak{s})\cdot e_{\tilde{\mathbb{R}}}.
\end{equation}
Here $e_{\tilde{\mathbb{R}}}\in \{S^{0},S^{\tilde{\mathbb{R}}}\}^{G}$ is the Euler class defined in (\ref{eq: euler class}).
\end{cor}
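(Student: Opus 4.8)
The plan is to derive both formulas from the connected sum formula in Proposition \ref{pro: connected sum} together with the computations of $\BF^{G}$ for $S^{2}\times S^{2}$ recorded in Example \ref{ex: S2 cross S2}. First I would handle the product spin structure case \eqref{eq: stablization product}: by Proposition \ref{pro: connected sum} applied with $M = S^{2}\times S^{2}$ and $\widetilde{\mathfrak{s}} = \tilde{\mathfrak{s}}_{0}$, we immediately get
\begin{equation*}
\BF^{G}\bigl((N,\mathfrak{s})\#((S^{2}\times S^{2})\times S^{1},\tilde{\mathfrak{s}}_{0})\bigr) = \BF^{G}(N,\mathfrak{s})\wedge \BF^{G}(S^{2}\times S^{2}),
\end{equation*}
and by Example \ref{ex: S2 cross S2} the second factor equals $e_{\tilde{\mathbb{R}}}$, where the smash product with the stable homotopy class $e_{\tilde{\mathbb{R}}}\in\{S^{0},S^{\tilde{\mathbb{R}}}\}^{G}$ is precisely the multiplication map written as $\cdot\, e_{\tilde{\mathbb{R}}}$ in the statement. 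This gives \eqref{eq: stablization product} with essentially no further work.

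The twisted case \eqref{eq: stablization twist} is the one that requires an extra idea, since Proposition \ref{pro: connected sum} as stated only covers connected summing with a \emph{product} family, not a twisted one. The key observation is Lemma \ref{lem: twisted family for S2 cross S2}: the twisted family $((S^{2}\times S^{2})\times S^{1},\tilde{\mathfrak{s}}^{\tau}_{0})$ is \emph{isomorphic} as a spin family to the product family $((S^{2}\times S^{2})\times S^{1},\tilde{\mathfrak{s}}_{0})$. So I would first argue that the Bauer-Furuta invariant $\BF^{G}$ depends only on the isomorphism class of a spin family — this follows directly from the construction, since isomorphic spin families yield fiberwise identifications of the Hilbert bundles $\mathcal{V}^{\pm},\mathcal{U}^{\pm}$ intertwining the Seiberg-Witten maps, hence stably homotopic approximated maps $\widetilde{sw}$, hence the same class in the equivariant stable homotopy group. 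Granting this, one has
\begin{equation*}
(N,\mathfrak{s})\#((S^{2}\times S^{2})\times S^{1},\tilde{\mathfrak{s}}^{\tau}_{0}) \;\cong\; (N,\mathfrak{s})\#((S^{2}\times S^{2})\times S^{1},\tilde{\mathfrak{s}}_{0})
\end{equation*}
as spin families, so \eqref{eq: stablization twist} reduces to \eqref{eq: stablization product}.

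One subtlety I would want to address carefully: the connected sum operation $(N_1,\mathfrak{s}_1)\#(N_2,\mathfrak{s}_2)$ is defined (via Lemma \ref{lem: gluing spin structure}) using the \emph{unique} gluing data $\tilde{\phi}(\mathfrak{s}_1,\mathfrak{s}_2)$ compatible with the given spin structures, so one must check that an isomorphism of spin families $\psi : (N_2,\mathfrak{s}_2)\xrightarrow{\cong}(N_2',\mathfrak{s}_2')$ induces an isomorphism of the connected sums $(N_1,\mathfrak{s}_1)\#(N_2,\mathfrak{s}_2) \cong (N_1,\mathfrak{s}_1)\#(N_2',\mathfrak{s}_2')$ respecting the glued spin structures. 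This is where the hypothesis that the isomorphism in Lemma \ref{lem: twisted family for S2 cross S2} covers the identity on the base and can be taken to fix a section-neighborhood is used; since the section $\gamma_i$ is unique up to homotopy by Assumption \ref{assum: bundle}(i), one can isotope $\psi$ to fix a standard $D^4\times S^1$, after which it patches with $\mathrm{id}$ on $N_1 - D^4\times S^1$. Also worth noting for bookkeeping is that the homology orientation convention is unaffected since $b^+(S^2\times S^2) = 1$ and its $H^2_+$ is canonically oriented, so the two connected-sum homology orientations agree. The main obstacle is thus not a computation but the functoriality of $\BF^G$ under isomorphisms of spin families and its compatibility with the connected sum construction; once that is in place, both displayed equations follow by combining Proposition \ref{pro: connected sum}, Example \ref{ex: S2 cross S2}, and Lemma \ref{lem: twisted family for S2 cross S2}.
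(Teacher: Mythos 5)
Your proposal matches the paper's proof: equation \eqref{eq: stablization product} follows from Proposition \ref{pro: connected sum} together with Example \ref{ex: S2 cross S2}, and \eqref{eq: stablization twist} is then deduced from \eqref{eq: stablization product} via the isomorphism of spin families in Lemma \ref{lem: twisted family for S2 cross S2}. The extra remarks you make on invariance of $\BF^{G}$ under isomorphisms of spin families and on the homology-orientation bookkeeping are correct elaborations of points the paper leaves implicit, not a different route.
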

\begin{proof}
The formula (\ref{eq: stablization product}) follows from Proposition \ref{pro: connected sum}, Example \ref{ex: S2 cross S2}. The formula (\ref{eq: stablization twist}) follows from (\ref{eq: stablization product}) and Lemma \ref{lem: twisted family for S2 cross S2}. 
\end{proof}

\section{Proof of the Main Theorem}
\subsection{The Key Proposition}
In this subsection, we prove a homotopy theoretic proposition (Proposition \ref{pro: key}), which will be the key ingredient in the proof of our main theorem.

Recall that the group $\{S^{\mathbb{R}+2\mathbb{H}},S^{6\tilde{\mathbb{R}}}\}^{S^{1}}$ admits a conjugation action $c_{j}$ (see (\ref{conjugation map})). The following lemma computes this group and this action.
\begin{lem}\label{lem: 2H 6R calculation}
The characteristic homomorphism $t:\{S^{\mathbb{R}+2\mathbb{H}},S^{6\tilde{\mathbb{R}}}\}^{S^{1}}\rightarrow \mathbb{Z}$ is surjective and has $\ker t=\mathbb{Z}/2$. The conjugation action $c_{j}$ acts trivially on $\ker t$. 
\end{lem}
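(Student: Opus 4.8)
The plan is to reduce everything to a nonequivariant computation of stable homotopy groups of Thom-type spaces, using the isomorphism $\psi$ established just before the definition of $t$. Recall that for $(a,b,d)=(1,2,6)$ we have $d-a=5$, which is odd and satisfies $5\le 4b-1=7$, so $t$ is genuinely defined and $\psi$ identifies $\{S^{\mathbb{R}+2\mathbb{H}},S^{6\tilde{\mathbb{R}}}\}^{S^{1}}$ with the nonequivariant group $\{S^{2\mathbb{R}}\wedge \mathbb{C}P^{3}_{+},S^{6\tilde{\mathbb{R}}}\}^{\{e\}}$. Forgetting the $j$-action, $S^{6\tilde{\mathbb{R}}}$ is just $S^{6}$, so this is $\{S^{2}\wedge \mathbb{C}P^{3}_{+},S^{6}\}=\{\Sigma^{2}\mathbb{C}P^{3}_{+},S^{6}\}$. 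Since $\mathbb{C}P^{3}_{+}=\mathbb{C}P^{3}\vee S^{0}$, this splits as $\{\Sigma^{2}\mathbb{C}P^{3},S^{6}\}\oplus \{S^{2},S^{6}\}$. The second summand is $\pi_{-4}^{s}=0$, so the whole group is $\{\Sigma^{2}\mathbb{C}P^{3},S^{6}\}\cong\{\mathbb{C}P^{3},S^{4}\}$, the stable cohomotopy group $\pi_{s}^{4}(\mathbb{C}P^{3})$ in degree $4$, i.e. $\widetilde{\pi}^{4}_{s}(\mathbb{C}P^{3})$.

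First I would compute $\pi^{4}_{s}(\mathbb{C}P^{3})$ via the Atiyah-Hirzebruch spectral sequence (or equivalently the cofibration sequences coming from the cell structure $\mathbb{C}P^{1}\subset\mathbb{C}P^{2}\subset\mathbb{C}P^{3}$ with cells in dimensions $2,4,6$). The $E_{2}$ page has $H^{2}(\mathbb{C}P^{3};\pi^{s}_{-2})=0$, $H^{4}(\mathbb{C}P^{3};\pi^{s}_{0})=\mathbb{Z}$, $H^{6}(\mathbb{C}P^{3};\pi^{s}_{2})=\mathbb{Z}/2$ contributing to total degree $4$; the only possibly nontrivial differential is $d_{3}\colon H^{2}(\mathbb{C}P^{3};\pi_{-2}^{s})\to H^{5}(\cdots)$ which is zero, so there is potentially a $\mathbb{Z}$-by-$\mathbb{Z}/2$ extension. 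One must then check that this extension is the split one: the class detecting $t$ maps onto the $\mathbb{Z}$ factor (this is essentially the definition of $t$ via the degree on $\tilde H^{d}$), and the $\mathbb{Z}/2$ comes from the attaching map $S^{5}\to \Sigma^2\mathbb{C}P^2$ being detected by $\eta$. Concretely, one sees $t$ is surjective because the inclusion $\mathbb{C}P^{3}\hookleftarrow \mathbb{C}P^{1}=S^{2}$ dualizes to a class hitting a generator, and $\ker t\cong\mathbb{Z}/2$ is the image of $\widetilde{\pi}^{4}_{s}(\mathbb{C}P^{3}/\mathbb{C}P^{1})\cong\widetilde{\pi}^{4}_{s}(S^{4}\vee S^{6})\ni \eta$ after checking the relevant part of the cofiber sequence. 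I expect the extension problem — showing the group really is $\mathbb{Z}\oplus\mathbb{Z}/2$ and not $\mathbb{Z}$ — to be the main obstacle, and I would resolve it by exhibiting an explicit element of order $2$ in the kernel, e.g. the composite $S^{2}\wedge\mathbb{C}P^{3}_{+}\to S^{2}\wedge S^{7}=S^{9}\xrightarrow{\eta}S^{8}\to S^{6}$ using the top cell collapse and a genuine Hopf element, and noting it is nonzero by an $\mathit{Sq}^{2}$-type secondary argument on $\mathbb{C}P^{3}$.

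For the statement about $c_{j}$, I would invoke Lemma \ref{lem: tau conjugation}, which says $\psi(c_{j}(\alpha))=(-1)^{d}m\circ\psi(\alpha)$ with $d=6$, so $\psi(c_{j}(\alpha))=m\circ\psi(\alpha)$, where $m$ is the mirror reflection on $\mathbb{C}P^{2b-1}=\mathbb{C}P^{3}$. On the $\mathbb{Z}$ summand, $m$ acts by $(-1)^{(d-a-1)/2}=(-1)^{2}=+1$ by Corollary \ref{cor: conjugate char homo} (indeed $t(c_j\alpha)=(-1)^{(3d-a-1)/2}t(\alpha)=(-1)^{8}t(\alpha)=t(\alpha)$), so $c_{j}$ is the identity on the free part. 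It then suffices to check $c_{j}$ fixes the $\mathbb{Z}/2$ in $\ker t$ — but any group automorphism of $\mathbb{Z}/2$ is trivial, so $c_{j}$ automatically acts trivially on $\ker t$ once we know $\ker t\cong\mathbb{Z}/2$. Hence the conjugation claim is essentially free given the group computation. The one subtlety to double-check is that $c_{j}$ does preserve the subgroup $\ker t$ and the splitting compatibly, which follows since $t\circ c_{j}=\pm t$, so $c_j(\ker t)=\ker t$.
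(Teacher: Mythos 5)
Your outline follows the paper's route: reduce via $\psi$ to the nonequivariant group $\{S^{2\mathbb{R}}\wedge\mathbb{C}P^{3}_{+},S^{6\tilde{\mathbb{R}}}\}^{\{e\}}$, compute it from the cell structure of $\mathbb{C}P^{3}$, identify $t$ with the degree on $H^{6}$, and deduce the $c_{j}$-statement from Corollary \ref{cor: conjugate char homo} together with the fact that $\mathbb{Z}/2$ has no nontrivial automorphisms (this last part is correct and is exactly the paper's argument). The gap is in the group computation itself. In your Atiyah--Hirzebruch argument the differential that matters is not the $d_{3}$ you name (its source $H^{2}(\mathbb{C}P^{3};\pi^{s}_{-2})$ vanishes for trivial reasons) but $d_{2}\colon H^{4}(\mathbb{C}P^{3};\pi^{s}_{1})\to H^{6}(\mathbb{C}P^{3};\pi^{s}_{2})$, a map of $\mathbb{Z}/2$'s whose target is precisely the class you need to survive. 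Equivalently, after collapsing $\mathbb{C}P^{1}$ the $6$-cell of $\mathbb{C}P^{3}/\mathbb{C}P^{1}$ attaches to the $4$-cell by a stable class in $\pi^{s}_{1}=\{0,\eta\}$, and the lemma hinges entirely on which one it is: if the class is $0$ one gets $\mathbb{Z}\oplus\mathbb{Z}/2$ with $t$ surjective; if it is $\eta$ the group is $\mathbb{Z}$, $\ker t=0$, and $t$ has image $2\mathbb{Z}$, so \emph{both} assertions of the lemma would fail. In particular this is not an extension problem at all (the quotient of the filtration is $\mathbb{Z}$, so any such extension splits automatically); the whole content is the vanishing of this attaching class, which the paper proves by the Cartan formula, $\operatorname{Sq}(x^{2})=(x+x^{2})^{2}=x^{2}$ in $H^{*}(\mathbb{C}P^{3};\mathbb{Z}/2)$, hence $\operatorname{Sq}^{2}(x^{2})=0$, hence $\mathbb{C}P^{3}/\mathbb{C}P^{1}\simeq S^{4}\vee S^{6}$ stably.

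Two further points where your sketch goes astray. Your parenthetical that ``the $\mathbb{Z}/2$ comes from the attaching map being detected by $\eta$'' is backwards: if that attaching class were $\eta$, the $\mathbb{Z}/2$ would be killed and surjectivity of $t$ would fail, as above. And the explicit order-two element is dimensionally off: the top cell of $S^{2\mathbb{R}}\wedge\mathbb{C}P^{3}_{+}$ is $S^{8}$, not $S^{9}$, and the candidate generator of $\ker t$ is $\eta^{2}$ composed with the collapse $S^{2\mathbb{R}}\wedge\mathbb{C}P^{3}_{+}\to S^{8}$; showing this composite is nonzero (and likewise producing a class with $t=1$, i.e.\ a stable retraction onto the top cell) is again equivalent to the vanishing of the attaching class. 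So the ``$\operatorname{Sq}^{2}$-type secondary argument'' you defer is not a side check but the missing core step; once you carry it out as in the paper, the rest of your argument goes through.
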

\begin{proof}
Smashing the cofiber sequence $S^{0}\rightarrow S^{2\mathbb{H}}\rightarrow S^{\mathbb{R}}\wedge (S(2\mathbb{H})_{+})$ with $S^{\mathbb{R}}$, we get a cofiber sequence $S^{\mathbb{R}}\rightarrow S^{\mathbb{R}+2\mathbb{H}}\rightarrow S^{2\mathbb{R}}\wedge (S(2\mathbb{H})_{+}),$ which induces the long exact sequence
$$
\cdots \rightarrow \{S^{2\mathbb{R}},S^{6\tilde{\mathbb{R}}}\}^{S^{1}}\rightarrow \{S^{2\mathbb{R}}\wedge (S(2\mathbb{H})_{+}) ,S^{6\tilde{\mathbb{R}}}\}^{S^{1}}\rightarrow \{S^{\mathbb{R}+2\mathbb{H}},S^{6\tilde{\mathbb{R}}}\}^{S^{1}}\rightarrow \{S^{\mathbb{R}},S^{6\tilde{\mathbb{R}}}\}^{S^{1}}\rightarrow
$$
By the equivariant Hopf theorem \cite[Chapter II.4 ]{tom2011transformation}, we have $\{S^{\mathbb{R}},S^{6\tilde{\mathbb{R}}}\}^{S^{1}}=\{S^{2\mathbb{R}},S^{6\tilde{\mathbb{R}}}\}^{S^{1}}=0$. Hence we get the isomorphism 
$$\{S^{\mathbb{R}+2\mathbb{H}},S^{6\tilde{\mathbb{R}}}\}^{S^{1}}\cong \{S^{2\mathbb{R}}\wedge (S(2\mathbb{H})_{+}) ,S^{6\tilde{\mathbb{R}}}\}^{S^{1}}.$$ Note that the $S^{1}$-action on $S^{2\mathbb{R}}\wedge (S(2\mathbb{H})_{+})$ is free way from base point. By Fact \ref{fact: quotient isomorphism}, we have 
$$\{S^{2\mathbb{R}}\wedge (S(2\mathbb{H})_{+}) ,S^{6\tilde{\mathbb{R}}}\}^{S^{1}}=\{S^{2\mathbb{R}}\wedge(\mathbb{C}P^{3}_+) ,S^{6\tilde{\mathbb{R}}} \}^{\{e\}}.$$

The cofiber sequence $\mathbb{C}P^{1}_{+}\rightarrow \mathbb{C}P^{3}_{+}\rightarrow  \mathbb{C}P^{3}/\mathbb{C}P^{1}$
induces the exact sequence 
$$
\{S^{3\mathbb{R}}\wedge(\mathbb{C}P^{1}_{+}), S^{6\tilde{\mathbb{R}}}\}^{\{e\}}\rightarrow \{ S^{2\mathbb{R}}\wedge(\mathbb{C}P^{3}_+) , S^{6\tilde{\mathbb{R}}}\}^{\{e\}}\rightarrow\{ S^{2\mathbb{R}}\wedge(\mathbb{C}P^{3}/\mathbb{C}P^{1}) , S^{6\tilde{\mathbb{R}}}\}^{\{e\}}\rightarrow \{S^{2\mathbb{R}}\wedge(\mathbb{C}P^{1}_{+}), S^{6\tilde{\mathbb{R}}}\}^{\{e\}}.
$$
By the cellular approximation theorem, we have $$\{S^{3\mathbb{R}}\wedge(\mathbb{C}P^{1}_{+}), S^{6\tilde{\mathbb{R}}}\}^{\{e\}}=\{S^{2\mathbb{R}}\wedge(\mathbb{C}P^{1}_{+}), S^{6\tilde{\mathbb{R}}}\}^{\{e\}}=0.$$ 
So we obtain the isomorphism $$ \{S^{2\mathbb{R}}\wedge(\mathbb{C}P^{3}_+) , S^{6\tilde{\mathbb{R}}}\}^{\{e\}}\cong\{ S^{2\mathbb{R}}\wedge(\mathbb{C}P^{3}/\mathbb{C}P^{1}) , S^{6\tilde{\mathbb{R}}}\}^{\{e\}}.$$ 

To understand the stable homotopy type of $\mathbb{C}P^{3}/\mathbb{C}P^{1}$ as a nonequivariant space, we let $x$ be the generator of $H^{2}(\mathbb{C}P^{3};\mathbb{Z}/2)$. Then the total Steenrod square is given by 
$$\operatorname{Sq}(x)=\operatorname{Sq}^{0}(x)+\operatorname{Sq}^2(x)=x+x^{2}.$$ By the Cartan formula, we get
$$
\operatorname{Sq}(x^2)= (x+x^{2})^2=x^{2}\in H^{*}(\mathbb{C}P^{3};\mathbb{Z}/2).
$$ 
In particular, we get $\operatorname{Sq}^2(x^2)=0$, which implies that the attaching map between the 6-cell and the 4-cell in $\mathbb{C}P^{3}$, regarded as an element in the stable homotopy group $\pi_{1}=\mathbb{Z}/2$, is trivial. Therefore, we conclude that $\mathbb{C}P^{3}/\mathbb{C}P^{1}$ is stably homotopy equivalent to $S^{6\mathbb{R}}\vee S^{4\mathbb{R}}$. This implies $$\{ S^{2\mathbb{R}}\wedge(\mathbb{C}P^{3}/\mathbb{C}P^{1}) , S^{6\tilde{\mathbb{R}}}\}^{\{e\}}=\pi_{2}\oplus \pi_{0}=\mathbb{Z}/2\oplus \mathbb{Z}.$$ The projection to the $\pi_{0}$-summand can be alternatively defined as the mapping degree on $H^{6}(*;\mathbb{Z})$, so it is exactly the characteristic homomorphism $t$. We have shown that $t$ is surjective with kernel $\mathbb{Z}/2$.  By Corollary \ref{cor: conjugate char homo}, we have $t(c_{j}(\alpha))=t(\alpha)$ for any $\alpha\in \{S^{\mathbb{R}+2\mathbb{H}},S^{6\tilde{\mathbb{R}}}\}^{S^{1}}$. So $c_{j}$ must send $\ker t$ to $\ker t$. Since $\ker t\cong \mathbb{Z}/2$, $c_{j}$ must act trivially on it.
\end{proof}

\begin{pro}\label{pro: key}
Let $\alpha$ be an element in $\{S^{\mathbb{R}+2\mathbb{H}},S^{6\tilde{\mathbb{R}}}\}^{G}$ that satisfies the conditions 
$$t(\Res^{G}_{S^{1}}(\alpha))=0,\text{ and } \alpha\cdot e_{\tilde{\mathbb{R}}}=0.$$ Then $\Res^{G}_{S^{1}}(\alpha)=0$.
\end{pro}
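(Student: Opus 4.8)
The plan is to combine the double coset formula (Fact with equation~(\ref{eq: double coset formula})), Lemma~\ref{lem: image of transfer}, and the computation of $\{S^{\mathbb{R}+2\mathbb{H}},S^{6\tilde{\mathbb{R}}}\}^{S^{1}}$ from Lemma~\ref{lem: 2H 6R calculation}. Write $\beta := \Res^{G}_{S^{1}}(\alpha) \in \{S^{\mathbb{R}+2\mathbb{H}},S^{6\tilde{\mathbb{R}}}\}^{S^{1}}$. The hypothesis $\alpha\cdot e_{\tilde{\mathbb{R}}}=0$ says, by Lemma~\ref{lem: image of transfer}, that $\alpha$ lies in the image of the transfer map $\Tr^{G}_{S^{1}}$; say $\alpha=\Tr^{G}_{S^{1}}(\gamma)$ for some $\gamma\in\{S^{\mathbb{R}+2\mathbb{H}},S^{6\tilde{\mathbb{R}}}\}^{S^{1}}$. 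Applying $\Res^{G}_{S^{1}}$ and the double coset formula~(\ref{eq: double coset formula}) gives $\beta=\Res^{G}_{S^{1}}\Tr^{G}_{S^{1}}(\gamma)=\gamma+c_{j}(\gamma)$.

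\textbf{Key steps.} First, decompose using Lemma~\ref{lem: 2H 6R calculation}: the group $\{S^{\mathbb{R}+2\mathbb{H}},S^{6\tilde{\mathbb{R}}}\}^{S^{1}}$ fits in a short exact sequence $0\to\mathbb{Z}/2\to\{S^{\mathbb{R}+2\mathbb{H}},S^{6\tilde{\mathbb{R}}}\}^{S^{1}}\xrightarrow{t}\mathbb{Z}\to 0$, and since this splits, the group is $\mathbb{Z}\oplus\mathbb{Z}/2$. Second, track the characteristic homomorphism: $t(\beta)=t(\gamma)+t(c_{j}(\gamma))$, and by Corollary~\ref{cor: conjugate char homo} (here $d=6$, $a=1$, so the sign $(-1)^{(3\cdot 6-1-1)/2}=(-1)^{8}=+1$) we have $t(c_{j}(\gamma))=t(\gamma)$, hence $t(\beta)=2t(\gamma)$. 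Third, invoke the hypothesis $t(\beta)=t(\Res^{G}_{S^{1}}(\alpha))=0$; since $\mathbb{Z}$ is torsion-free this forces $t(\gamma)=0$, i.e. $\gamma\in\ker t\cong\mathbb{Z}/2$. Fourth, apply the last sentence of Lemma~\ref{lem: 2H 6R calculation}: $c_{j}$ acts trivially on $\ker t$, so $c_{j}(\gamma)=\gamma$, and therefore $\beta=\gamma+c_{j}(\gamma)=2\gamma$. Since $\gamma$ is $2$-torsion, $2\gamma=0$, so $\beta=0$, which is exactly $\Res^{G}_{S^{1}}(\alpha)=0$.

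\textbf{Main obstacle.} The genuinely nontrivial input is already packaged into the preceding lemmas—namely the identification of $\ker t$ with $\mathbb{Z}/2$ (via the $\operatorname{Sq}^{2}$ vanishing showing $\mathbb{C}P^{3}/\mathbb{C}P^{1}\simeq S^{4}\vee S^{6}$ stably) and the fact that $c_{j}$ acts trivially on this $\mathbb{Z}/2$. Given those, the proposition itself is a short diagram-chase. The one point requiring care is bookkeeping the sign in Corollary~\ref{cor: conjugate char homo}: one must check that with $d-a=5$ odd and the specific values $d=6,a=1$, the conjugation fixes $t$ (sign $+1$) rather than negating it; if the sign were $-1$ the argument would instead give $t(\beta)=0$ automatically and a slightly different—but still routine—conclusion. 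So the expected difficulty is essentially nil beyond verifying that arithmetic and confirming the splitting of the short exact sequence is compatible with the $c_j$-action as stated.
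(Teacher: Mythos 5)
Your proof is correct and is essentially identical to the paper's own argument: both use the hypothesis $\alpha\cdot e_{\tilde{\mathbb{R}}}=0$ together with Lemma~\ref{lem: image of transfer} to write $\alpha$ as a transfer, apply the double coset formula~(\ref{eq: double coset formula}), and then combine Corollary~\ref{cor: conjugate char homo} with the computation of $\ker t\cong\mathbb{Z}/2$ and the triviality of the $c_{j}$-action on it from Lemma~\ref{lem: 2H 6R calculation} to conclude $\Res^{G}_{S^{1}}(\alpha)=2\gamma=0$. Your explicit sign check $(-1)^{(3d-a-1)/2}=+1$ for $d=6$, $a=1$ is a correct (and welcome) verification of a step the paper leaves implicit.
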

\begin{proof}
By Lemma \ref{lem: image of transfer}, we see that $\alpha=\Tr^{G}_{S^{1}}(\beta)$ for some $\beta\in \{S^{\mathbb{R}+2\mathbb{H}},S^{6\tilde{\mathbb{R}}}\}^{S^{1}}$. Therefore, by the double coset formula (\ref{eq: double coset formula}), we have $\Res^{G}_{S^{1}}(\alpha)=\beta+c_{j}(\beta)$. By lemma \ref{cor: conjugate char homo}, we get $$0=t(\beta+c_{j}(\beta))=2t(\beta).$$ So $\beta$ is in the kernel of $t$, which is $\mathbb{Z}/2$ by Lemma \ref{lem: 2H 6R calculation}. By Lemma \ref{lem: 2H 6R calculation} again, we have $c_{j}(\beta)=\beta$. So we get $\Res^{G}_{S^{1}}(\alpha)=2\beta=0$.
\end{proof}
\subsection{Proof of Theorem \ref{thm: main} }
Let $X_{1}$ be the $K3$ surface and $X_{0}=S^{2}\times S^{2}$. Let $\mathfrak{s}_{i}$ be the unique spin structure on $X_{i}$ for $i=0,1$.  
We consider the Dehn twist $$\delta: X_{1}\# X_{1}\rightarrow X_{1}\# X_{1}$$ 
along the separating $S^{3}$ in the neck. We want to show that $\delta$ is not smoothly isotopic to the identity map even after a single stabilization. Without loss of generality, we may assume that the stabilization is done in the first copy of $X_{1}$. Then we need to show that the map
$$
\delta^{s}:=\text{id}_{X_{0}}\#\delta : X_{0}\#X_{1}\# X_{1}\rightarrow X_{0}\#X_{1}\# X_{1}
$$
is not smoothly isotopic to the identity map. As in \cite{KronheimerMrowkaDehnTwist}, we will prove this by forming the mapping torus 
$$
N_{\delta^{s}}:=((X_{0}\#X_{1}\# X_{1})\times [0,1])/(x,0)\sim (\delta^{s}(x),1)
$$
and show that it a nontrivial smooth bundle over $S^{1}$. 

By Lemma \ref{lem: vanishing}, the product spin structure over the trivial bundle has vanishing $\BF^{G}$. Therefore, it suffices to show that both spin families associated to $N_{\delta^{s}}$ has nontrivial $\BF^{G}$. 

To prove this, we consider the product family $(X_{i}\times S^{1}, \tilde{\mathfrak{s}}_{i})$ and the twisted family $(X_{i}\times S^{1}, \tilde{\mathfrak{s}}^{\tau}_{i})$. By the discussion in \cite[Begining of Section 5]{KronheimerMrowkaDehnTwist}, the mapping torus $N_{\delta}$ can be formed as the fiberwise connected sum 
$$
(X_{1}\times S^{1})\#_{\varphi(\tilde{\mathfrak{s}}_{1},\tilde{\mathfrak{s}}^{\tau}_{1})}(X_{1}\times S^{1}).$$
Therefore, the bundle $N_{\delta^{s}}$ can formed as the fiberwise connected sum 
$$
(X_{0}\times S^{1})\#_{\varphi(\tilde{\mathfrak{s}}_{0},\tilde{\mathfrak{s}}_{1})}(X_{1}\times S^{1})\#_{\varphi(\tilde{\mathfrak{s}}_{1},\tilde{\mathfrak{s}}^{\tau}_{1})}(X_{1}\times S^{1})
$$
as well as the fiberwise connected sum 
 $$
(X_{0}\times S^{1})\#_{\varphi(\tilde{\mathfrak{s}}^{\tau}_{0},\tilde{\mathfrak{s}}^{\tau}_{1})}(X_{1}\times S^{1})\#_{\varphi(\tilde{\mathfrak{s}}_{1}^{\tau},\tilde{\mathfrak{s}}_{1})}(X_{1}\times S^{1}).
$$
The two spin families associated to $N_{\delta^{s}}$ are  $$(X_{0}\times S^{1}, \tilde{\mathfrak{s}}_{0})\#(X_{1}\times S^{1}, \tilde{\mathfrak{s}}_{1})\#(X_{1}\times S^{1}, \tilde{\mathfrak{s}}^{\tau}_{1}) $$
and 
$$(X_{0}\times S^{1}, \tilde{\mathfrak{s}}^{\tau}_{0})\#(X_{1}\times S^{1}, \tilde{\mathfrak{s}}^{\tau}_{1})\#(X_{1}\times S^{1}, \tilde{\mathfrak{s}}_{1}) .$$
We will show that $$\BF^{G}((X_{0}\times S^{1}, \tilde{\mathfrak{s}}_{0})\#(X_{1}\times S^{1}, \tilde{\mathfrak{s}}_{1})\#(X_{1}\times S^{1}, \tilde{\mathfrak{s}}^{\tau}_{1}))\neq 0$$ and the other family is similar. We use $\alpha$ to denote the element  $$\BF^{G}((X_{1}\times S^{1}, \tilde{\mathfrak{s}}_{1})\#(X_{1}\times S^{1}, \tilde{\mathfrak{s}}^{\tau}_{1}))\in \{S^{\mathbb{R}+2\mathbb{H}},S^{6\tilde{\mathbb{R}}}\}^{G}.$$
By Proposition \ref{pro: connected sum}, $\Res^{G}_{S^{1}}(\alpha)$ can be decomposed as the product of the elements 
$$
\BF^{S^{1}}(X_{1}, \mathfrak{s}_{1})\in \{S^{\mathbb{H}},S^{3\tilde{\mathbb{R}}}\}^{S^{1}}\text{ and }\BF^{S^{1}}((X_{1}\times S^{1}, \tilde{\mathfrak{s}}^{\tau}_{1}))\in \{S^{\mathbb{R}+\mathbb{H}},S^{3\tilde{\mathbb{R}}}\}^{S^1}.
$$
By Lemma \ref{lem: SW vanishing}, the Seiberg-Witten invariant $t(\Res^{G}_{S^{1}}(\alpha))=0$. (This can also be proved by checking the explicit description of the Seiberg-Witten moduli space given in \cite{KronheimerMrowkaDehnTwist}.) 

By Corollary \ref{cor: stablization BF}, we have $$\BF^{G}((X_{0}\times S^{1}, \tilde{\mathfrak{s}}_{0})\#(X_{1}\times S^{1}, \tilde{\mathfrak{s}}_{1})\#(X_{1}\times S^{1}, \tilde{\mathfrak{s}}^{\tau}_{1}))=\alpha\cdot e_{\tilde{\mathbb{R}}} $$
For the sake of contradiction, we suppose $\alpha\cdot e_{\tilde{\mathbb{R}}}=0$ . Then by Proposition \ref{pro: key}, we have $\Res^{G}_{S^{1}}(\alpha)=0$, which implies 
$$
\BF^{\{e\}}((X_{1}\times S^{1}, \tilde{\mathfrak{s}}_{1})\#(X_{1}\times S^{1}, \tilde{\mathfrak{s}}^{\tau}_{1}))=\Res^{G}_{\{e\}}(\alpha)= \Res^{S^{1}}_{\{e\}}\circ \Res^{G}_{S^{1}}(\alpha)=0. 
$$
However, Kronheimer-Mrowka \cite[Proposition 5.1]{KronheimerMrowkaDehnTwist} computed this nonequivariant Bauer-Furuta invariant as $\eta^{3}\neq 0\in \pi_{3}$. (Kronheimer-Mrowka's definition of $\BF^{\{e\}}$ coincides with ours because of Lemma \ref{lem: equivalent definition of BF}.) This is a contradiction and our proof is finished.

\bibliographystyle{amsplain}
\bibliography{Bbib0320}
\end{document}